\def\blfootnote{\xdef\@thefnmark{}\@footnotetext}
\newtheorem{thm}{Theorem}[section]
\newtheorem{cor}[thm]{Corollary}
\newtheorem{lem}[thm]{Lemma}
\newtheorem{prop}[thm]{Proposition}
\theoremstyle{definition}
\newtheorem{defn}[thm]{Definition}
\theoremstyle{remark}
\newtheorem{rem}[thm]{Remark}
\newfont{\eufm}{eufm10}
\renewcommand{\phi}{\varphi}
\newcommand{\R}{\mathbb R}
\newcommand{\Z}{\mathbb Z}
\renewcommand{\H}{\mathbb{H}}
\newcommand{\MCG}{\operatorname{MCG}}
\newcommand{\CG}{\mathcal{C}}
\newcommand{\cl}{\mathfrak{C}}
\newcommand{\HF}{\mathcal{H}}
\newcommand{\E}{\mathcal{E}}
\newcommand{\RG}{\mathcal{R}}
\newcommand{\diam}{\operatorname{diam}}
\newcommand{\GL}{\mathcal{GL}}
\newcommand{\EL}{\mathcal{EL}}
\newcommand{\ELW}{\mathcal{ELW}}
\newcommand{\ML}{\mathcal{ML}}
\newcommand{\toCH}{\substack{CH \\ \to}}
\begin{document}

\title{WWPD elements of big mapping class groups}
\author{Alexander J. Rasmussen}

\date{\today}
\maketitle

\begin{abstract}
We study mapping class groups of infinite type surfaces with isolated punctures and their actions on the \textit{loop graphs} introduced by Bavard-Walker. We classify all of the mapping classes in these actions which are loxodromic with a WWPD action on the corresponding loop graph. The WWPD property is a weakening of Bestvina-Fujiwara's weak proper discontinuity and is useful for constructing non-trivial quasimorphisms. We use this classification to give a sufficient criterion for subgroups of big mapping class groups to have infinite-dimensional second bounded cohomology and use this criterion to give simple proofs that certain natural subgroups of big mapping class groups have infinite-dimensional second bounded cohomology.
\end{abstract}


\section{Introduction}

Mapping class groups of infinite type surfaces, commonly known as \textit{big mapping class groups}, have been a subject of intense study in the last several years. Researchers have uncovered many similarities and differences with the study of finite type mapping class groups. In this paper we consider two themes of (dis)similarity between the families of finite type mapping class groups and big mapping class groups.

\subsection{Acylindrical hyperbolicity}

Let $S$ denote a finite type or infinite type surface and let $\MCG(S)$ denote its mapping class group, \[\MCG(S)=\pi_0(\operatorname{Homeo}^+(S)).\] For the rest of the paper, all surfaces will be assumed to be orientable.

When $S$ is finite type, there is a certain infinite diameter graph $\CG(S)$, known as the curve graph acted on by $\MCG(S)$. It was first proved by Masur-Minsky in \cite{mm} that $\CG(S)$ is hyperbolic. Moreover, the action $\MCG(S)\curvearrowright \CG(S)$ has many nice properties. In particular, Bowditch proved in \cite{tight} that the action is \textit{acylindrical}, which may be thought of as a sort of proper discontinuity of the action on ``sufficiently distant pairs of points.'' This fact is enough to prove numerous results about the algebra and geometry of $\MCG(S)$ in the finite type case.

However, in \cite{notacyl}, Bavard-Genevois proved that if $S$ is infinite type then $\MCG(S)$ can \emph{never} admit an acylindrical action on a hyperbolic graph. Nonetheless, there are natural infinite diameter graphs acted on by certain big mapping class groups. In particular, if $S$ has an isolated puncture $p$, then $\MCG(S;p)$, the subgroup of $\MCG(S)$ fixing $p$, admits an action on the \textit{loop graph} $L(S;p)$ (see Section \ref{loopdefn} for the definition). This action was first studied by Bavard in \cite{ray} in a slightly different guise.

Although the action $\MCG(S;p)\curvearrowright L(S;p)$ is not acylindrical, we show that it still retains some vestige of acylindricity. However, this vestige is confined to mapping classes preserving finite type subsurfaces. To state our result, recall the notion of weakly properly discontinuous (WPD) isometries of hyperbolic metric spaces introduced by Bestvina-Fujiwara in \cite{bf}. A loxodromic element $g$ in a hyperbolic action is WPD roughly when the group action is acylindrical \textit{along the axis of $g$}. The condition WWPD was introduced by Bestvina-Bromberg-Fujiwara in \cite{bbf}. It is a weakening of WPD which allows the loxodromic element $g$ to have a large centralizer, and is useful for constructing nontrivial quasimorphisms. See Section \ref{wpdsection} for the precise definitions of WPD and WWPD.

Our main theorem may be stated as follows:

\begin{thm} \label{mainthm}
Let $\phi \in \MCG(S;p)$. Then $\phi$ is WWPD in the action of $\MCG(S;p)$ on $L(S;p)$ if and only if there exists a finite type subsurface $V\subset S$ containing $p$ such that
\begin{itemize}
\item $\phi(V)=V$,
\item $\phi|V$ is pseudo-Anosov.
\end{itemize}
\end{thm}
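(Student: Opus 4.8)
The plan is to prove both directions by analyzing how $\phi$ acts on the loop graph $L(S;p)$ and relating loxodromic/WWPD behavior to the dynamics of $\phi$ on curves and subsurfaces. Recall that a loop here is a properly embedded line both of whose ends limit to $p$; such a loop has an associated curve (the boundary of a regular neighborhood of the loop union $p$), and distance in $L(S;p)$ is controlled by subsurface projections and intersection numbers. For the \emph{backward} direction, suppose $V$ is a finite type subsurface containing $p$ with $\phi(V)=V$ and $\phi|V$ pseudo-Anosov. First I would check that $\phi$ acts loxodromically on $L(S;p)$: this should follow because $\phi|V$ pseudo-Anosov forces the $\phi$-orbit of a loop supported in $V$ to make linear progress in $L(V;p)$, and $L(V;p)$ coarsely embeds in $L(S;p)$ (or at least its relevant subsurface projection does); one then invokes the Bavard--Walker machinery or a Masur--Minsky-type distance estimate. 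For WWPD I would verify the definition from Section \ref{wpdsection}: I need that for the axis-endpoints $\phi^{\pm\infty}\in\partial L(S;p)$, the stabilizer (setwise) of the pair $\{\phi^{-\infty},\phi^{+\infty}\}$ acts on a neighborhood of the axis properly up to the ambiguity allowed by WWPD. The key mechanism: any $g$ fixing both endpoints must coarsely preserve the axis, hence must coarsely preserve the attracting/repelling laminations of $\phi|V$; since $\phi|V$ is pseudo-Anosov on a finite type surface, the subgroup of $\MCG(S;p)$ preserving this pair of laminations on $V$ (together with the finite-type subsurface structure) is virtually cyclic modulo the part of the mapping class group acting trivially on $V$ — and that is precisely the WWPD-permitted centralizer.

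For the \emph{forward} direction, assume $\phi$ is WWPD (in particular loxodromic) on $L(S;p)$; I want to produce the subsurface $V$. The idea is that a loxodromic $\phi$ has a well-defined pair of limit points in $\partial L(S;p)$, and by the structure theory of the loop graph boundary (Bavard--Walker, or the description of the Gromov boundary in terms of laminations / high-filling rays), these limit points correspond to laminations $\Lambda^{\pm}$ on $S$. First I would argue that $\Lambda^{+}$ and $\Lambda^{-}$ must be supported on a \emph{finite type} subsurface: if the support were infinite type, one can construct an infinite family of elements coarsely fixing the axis but pairwise far apart along it (using the non-acylindricity phenomenon of Bavard--Genevois \cite{notacyl}, localized near the axis), contradicting WWPD. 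Concretely, infinite-type support gives ``room'' to build homeomorphisms supported in a complementary infinite-type piece (or commuting partial pseudo-Anosovs) that fix $\Lambda^{\pm}$ and hence the axis endpoints, but move points along the axis arbitrarily far — this is the heart of the argument. Once the support is finite type, I would take $V$ to be a finite type subsurface containing $p$ and filled by (a realization of) $\Lambda^{+}\cup\Lambda^{-}$; then $\phi$ permutes the finitely many topological types of such minimal filling subsurfaces, so a power of $\phi$ fixes $V$, and after adjusting (passing to this $V$ and noting $\phi$ must itself preserve it since the laminations are $\phi$-invariant) we get $\phi(V)=V$. Finally, $\phi|V$ has $\Lambda^{\pm}$ as invariant filling laminations with the expanding/contracting dynamics inherited from loxodromicity, so by Thurston's classification $\phi|V$ is pseudo-Anosov.

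The main obstacle I anticipate is the forward direction's finiteness step: precisely, ruling out infinite-type support for the limit laminations using only the WWPD hypothesis. WWPD is quite weak — it permits arbitrarily large centralizers — so I cannot simply count stabilizer elements; I have to exploit that WWPD still forbids a \emph{continuum} (or at least an unbounded-orbit worth) of elements coarsely fixing the ordered pair of endpoints while translating along the axis. This requires a careful construction: given a hypothetical infinite-type piece $W$ disjoint from (or complementary to) a neighborhood of the laminations' support, build mapping classes supported in $W$ that fix the two boundary points of the axis and verify they translate along the axis unboundedly. Making ``$W$ disjoint from the support'' precise when the support is a genuine lamination rather than a subsurface, and ensuring the constructed elements genuinely move along the axis (not just fail to fix it pointwise), will need the fine structure of $L(S;p)$ and its boundary from Section \ref{loopdefn} and the cited work of Bavard--Walker. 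A secondary technical point is the coarse-geometry bookkeeping in the backward direction — establishing the quasi-isometric embedding of the $\phi|V$-axis from $L(V;p)$ into $L(S;p)$ and controlling the WWPD ``neighborhood'' — but I expect this to be routine given the distance formulas already available.
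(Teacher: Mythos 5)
Your forward direction attacks the wrong property. You propose to rule out infinite-type support by building ``an infinite family of elements \ldots that fix $\Lambda^{\pm}$ and hence the axis endpoints, but move points along the axis arbitrarily far,'' and you explicitly say WWPD ``forbids a continuum \ldots of elements coarsely fixing the ordered pair of endpoints while translating along the axis.'' It does not. The WWPD definition used here (Section \ref{wpdsection}) places \emph{no} restriction on elements that genuinely fix $g^{+}$ and $g^{-}$ --- the stabilizer of the endpoint pair may be enormous and may translate arbitrarily along the axis; that is exactly the weakening WWPD allows relative to WPD. What WWPD forbids is a sequence $h_n$ with $h_n(g^{\pm})\to g^{\pm}$ but $h_n(g^{\pm})\neq g^{\pm}$ for infinitely many $n$. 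So your construction, even if carried out, would not contradict WWPD. The paper's argument produces precisely the forbidden kind of sequence: if a ray $l$ in the fixed clique $\cl^{+}(\phi)$ escapes every finite type subsurface, then for an exhaustion $V_1\subset V_2\subset\cdots$ one finds boundary curves $\gamma_n$ crossed by $l$, and suitable powers $\psi_n=T_{\gamma_n}^{i_n}$ of the Dehn twists \emph{move} the fixed clique (since twisting $l$ around $\gamma_n$ changes it) yet satisfy $\psi_n(F(\cl^{\pm}(\phi)))\to F(\cl^{\pm}(\phi))$, because $\psi_n$ is eventually the identity on any fixed finite collection of loops approximating the endpoint. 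Note also that the relevant finiteness condition is on the individual high-filling rays in the fixed cliques (the boundary of $L(S;p)$ is a space of cliques of rays, not of laminations on $S$), and the passage from ``rays contained in finite type subsurfaces'' to a pseudo-Anosov on a witness is a separate lemma (Lemma \ref{preservewitness}).

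The backward direction is closer in spirit but also has a gap: verifying WWPD requires handling sequences $\tau_n$ with $\tau_n(\Phi(\lambda^{\pm}))\to\Phi(\lambda^{\pm})$ in $\E(S;p)$, not just elements that already fix the endpoints, so describing the stabilizer of the pair of laminations does not suffice. The missing step is the paper's Lemma \ref{fixedsurf}: such a convergent sequence must eventually satisfy $\tau_n(V)=V$ on the nose (proved by a compactness argument in the unit tangent bundle of a compact core of $V$, showing that otherwise $\tau_n(\partial V)$ would eventually cross $\tau_n(l)$ for a ray $l$ spiralling onto $\lambda^{+}$). Only after reducing to $\MCG(V;p)$ can one translate convergence of cliques into coarse Hausdorff convergence of ending laminations on $V$ and invoke the finite-type WPD result of Bestvina--Fujiwara together with Klarreich's theorem to upgrade convergence to eventual equality. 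A priori, mapping classes could converge the fixed cliques while moving $V$ off itself, and ruling that out is genuinely nontrivial; your sketch does not address it.
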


In particular, if $\phi\in \MCG(S;p)$ does not preserve a finite type subsurface of $S$ then it cannot be WWPD. We will sometimes say that $\phi \in \MCG(S;p)$ \textit{contains a pseudo-Anosov on a finite type witness} if there exists a subsurface $V\subset S$ as in Theorem \ref{mainthm} for $\phi$.

Any group which admits a non-elementary action on a hyperbolic space with a WPD element admits another non-elementary action on a hyperbolic space which is acylindrical \cite{osin}. Thus, by \cite{notacyl} we see that there are \textit{no} WPD elements in the action $\MCG(S;p)\curvearrowright L(S;p)$. Hence the classification of Theorem \ref{mainthm} is in some sense the best possible.

\subsection{Bounded cohomology}

One common theme in the study of big mapping class groups and finite type mapping class groups has been the study of bounded cohomology and quasimorphisms. In \cite{bf} Bestvina-Fujiwara showed that if $S$ is finite type and $\chi(S)<0$ then any subgroup of $\MCG(S)$ which is not virtually abelian has infinite-dimensional second bounded cohomology.

Bavard-Walker proved a partial extension of this result to infinite type surfaces with isolated punctures in \cite{simultaneous} (see \cite{simultaneous} Theorem 9.1.1). Their statement relies on the notion of \textit{weights} for loxodromic elements developed in that paper. In particular they prove a corollary showing that certain subgroups of $\MCG(S;p)$ with mapping classes which contain a pseudo-Anosov on a finite type witness have infinite dimensional second bounded cohomology. In Section \ref{bchsection} we prove a strengthened version of their corollary.

\begin{restatable}{cor}{bounded} \label{bounded}
Let $H\leq \MCG(S;p)$ be a subgroup such that
\begin{itemize}
\item $H$ contains a a pair of independent loxodromic elements in the action on $L(S;p)$,
\item there exists $\phi \in H$ which contains a pseudo-Anosov on a finite type witness.
\end{itemize}
Then $H_b^2(H;\R)$ is uncountably infinite dimensional.
\end{restatable}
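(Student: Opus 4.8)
The plan is to leverage Theorem~\ref{mainthm} together with the standard machinery for producing infinite-dimensional second bounded cohomology from WWPD elements, in the form developed by Bestvina--Bromberg--Fujiwara. First I would observe that by hypothesis there is $\phi \in H$ which contains a pseudo-Anosov on a finite type witness, so Theorem~\ref{mainthm} tells us $\phi$ is a loxodromic WWPD element for the action $H \curvearrowright L(S;p)$ (it is WWPD for the full $\MCG(S;p)$-action, and WWPD passes to the subgroup $H$ since the WWPD condition is a statement about orbits of a group acting, and restricting to a subgroup only shrinks the relevant orbit sets). The second hypothesis guarantees the action of $H$ on $L(S;p)$ is non-elementary (it has two independent loxodromics), so $L(S;p)$ has infinitely many ends as an $H$-space and in particular $H$ is not virtually cyclic.

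Next I would invoke the general criterion: a group $H$ admitting a non-elementary action on a hyperbolic space with a loxodromic WWPD element has $\dim H^2_b(H;\R) = \infty$, in fact uncountably infinite-dimensional. The cleanest route is to cite the result that WWPD elements yield non-trivial homogeneous quasimorphisms together with the Bestvina--Fujiwara-style construction: one takes the WWPD element $\phi$, uses the non-elementarity to find group elements conjugating the axis of $\phi$ so that one obtains infinitely many ``sufficiently different'' loxodromic elements $g_i \phi g_i^{-1}$, and then applies the Brooks-type counting quasimorphism construction adapted to the WWPD setting (as in \cite{bbf}, or the version for WWPD actions that allows large centralizers). The output is an embedding of an uncountable-dimensional space (e.g. $\ell^\infty$ of some sequence space, or $\bigoplus$ over an uncountable index set) into the space of homogeneous quasimorphisms on $H$ modulo homomorphisms, which injects into $H^2_b(H;\R)$ via the connecting map $EH^2_b \to H^2_b$. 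Since $H^1(H;\R) = \operatorname{Hom}(H,\R)$ could be large this requires the standard argument that the quasimorphisms constructed are linearly independent modulo \emph{all} homomorphisms, which is handled by the usual defect/support estimates.

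The main obstacle — or rather, the main point requiring care — is verifying that the WWPD property for $\phi$ genuinely suffices in place of WPD for this bounded cohomology conclusion, and that it survives restriction to $H$. For the first point I would appeal directly to \cite{bbf}, where it is shown that a single loxodromic WWPD element in a non-elementary action suffices to build infinitely many linearly independent quasimorphisms; the key technical input is that WWPD still controls coarse stabilizers of the \emph{oriented} axis well enough to make the Brooks counting functions quasimorphisms, even though the (unoriented) stabilizer may be infinite. For the second point, one checks that if $\phi$ is WWPD for $G = \MCG(S;p) \curvearrowright L(S;p)$ and $\phi \in H \leq G$, then $\phi$ is WWPD for $H \curvearrowright L(S;p)$: this is immediate from the definition, since the WWPD condition asks that for the orbit of a coarse axis point, only finitely many group elements move it a bounded amount in a way compatible with the orientation, and intersecting with $H$ only removes such elements. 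Finally, the two-independent-loxodromics hypothesis must be shown to give the non-elementarity needed; this is standard for actions on hyperbolic spaces. I would present these as short lemmas and then conclude by quoting the BBF machinery, so that the proof in the paper is indeed ``simple'' as advertised.
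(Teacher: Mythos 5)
Your proposal is correct and follows essentially the same route as the paper: apply Theorem~\ref{mainthm} to get a WWPD element of $H$ (noting WWPD passes to subgroups), use the pair of independent loxodromics for non-elementarity, and quote the packaged criterion that a non-elementary hyperbolic action with a WWPD element forces $H_b^2$ to be uncountably infinite dimensional (the paper cites this as Handel--Mosher \cite{wwpd} Theorem 2.10 rather than unwinding the Brooks-type construction from \cite{bbf}, but the content is the same).
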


Namely our corollary only requires the existence of \emph{one} mapping class containing a pseudo-Anosov on a finite-type witness whereas theirs requires two such mapping classes of different weights.

We use Corollary \ref{bounded} to give simple proofs that several natural subgroups of $\MCG(S;p)$ have infinite-dimensional second bounded cohomology.

In light of Theorem \ref{mainthm}, it is interesting to know that there are mapping classes $\phi\in\MCG(S;p)$ which do not preserve any finite type witness but for which there exists a quasimorphism $q:\MCG(S;p)\to \R$ with $q$ unbounded on the powers of $\phi$. See \cite{ray}.

\bigskip

\noindent{\bf Acknowledgements.} I would like to thank Yair Minsky and Javier Aramayona for valuable conversations related to this work. I would also like to thank Yair Minsky for reading a draft of this paper. I was partially supported by NSF Grant DMS-1610827.

\section{Background}

\subsection{Laminations and the coarse Hausdorff topology}

Let $V$ be a finite type surface endowed with a hyperbolic metric of finite area.

\begin{defn}
Let $\lambda$ be a geodesic lamination on $V$. We say that $\lambda$ is an ending lamination if it is
\begin{itemize}
\item minimal (every leaf of $\lambda$ is dense in $\lambda$), and
\item filling (every simple closed geodesic in  $V$ intersects $\lambda$ transversely).
\end{itemize}
\end{defn}

\begin{defn}
Let $\{\lambda_n\}_{n=1}^\infty$ be a sequence of laminations on $V$. We say that $\lambda_n$ \textit{coarse Hausdorff} converges to a lamination $\lambda$, denoted $\lambda_n \toCH \lambda$, if for any subsequence $\{\lambda_{n_i}\}_i$ such that $\lambda_{n_i}$ Hausdorff converges to a lamination $\mu$, we have $\lambda\subset \mu$.
\end{defn}

From now on we denote by:

\begin{enumerate}
\item $\ML(V)$ the space of \textit{measured laminations} on $V$ with the \textit{weak$^*$ topology},
\item $\GL(V)$ the space of \textit{geodesic laminations} on $V$ with the topology of \textit{Hausdorff convergence},
\item $\EL(V)$ the space of \textit{ending laminations} on $V$ with the topology of \textit{coarse Hausdorff convergence}.
\end{enumerate}

We also denote by $\CG(V)$ the \textit{curve graph} of $V$ which is the graph with vertex set equal to the set of homotopy classes of essential simple closed curves on $V$ and edges joining pairs of homotopy classes that may be realized disjointly. Recall that a simple closed curve is essential if it doesn't bound a disk or a once-punctured disk. The graph $\CG(V)$ was proven to be Gromov hyperbolic by Masur-Minsky in \cite{mm}.

There is another natural way to put a topology on $\EL(V)$. Namely, we may consider the subset of $\ML(V)$ consisting of measured laminations whose underlying laminations are ending laminations. Then there is an equivalence relation on this subset defined by $\lambda \sim \mu$ if the underlying laminations of $\lambda$ and $\mu$ are the same. The set of equivalence classes may be identified with $\EL(V)$ \textit{as a set} and we may endow $\EL(V)$ with the corresponding quotient topology. It was observed by Hamenst\"{a}dt in \cite{tracks} that the resulting quotient topology is in fact equal to the coarse Hausdorff topology.

With this characterization of the topology on $\EL(V)$, Klarreich's Theorem on the boundary of $\CG(V)$ is:

\begin{thm}[\cite{kla} Theorem 1.3] \label{cgboundary}
The Gromov boundary of $\CG(V)$ is equivariantly homeomorphic to $\EL(V)$.
\end{thm}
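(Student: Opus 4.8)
The plan is to reduce the statement to a single coarse-geometric comparison between the curve graph and Teichm\"{u}ller space, following the strategy of Klarreich. Fix a complete finite-area hyperbolic metric on $V$ and recall the \emph{systole map} $\sigma\colon\mathcal{T}(V)\to\CG(V)$, which sends a point to one of its shortest curves. It is coarsely $\MCG(V)$-equivariant, coarsely Lipschitz for the Teichm\"{u}ller metric, coarsely surjective, and by \cite{mm} it sends every Teichm\"{u}ller geodesic to an unparametrized quasigeodesic in $\CG(V)$ with constants depending only on $V$. The first step is an \emph{unboundedness lemma}: if a sequence of essential simple closed curves has geodesic representatives converging in the coarse Hausdorff topology to a filling lamination, then the sequence has infinite diameter in $\CG(V)$. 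Indeed, if not, then after passing to a subsequence all of its curves are disjoint from (or equal to) one fixed essential curve $\gamma$, so every Hausdorff limit of their geodesic representatives is disjoint from $\gamma$, contradicting that the coarse Hausdorff limit fills.

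The technical heart, and the step I expect to be the main obstacle, is the following estimate, which is Klarreich's key lemma: if $\alpha_n$ and $\beta_n$ are essential simple closed curves with $\alpha_n\toCH\lambda$ and $\beta_n\toCH\lambda'$ for filling laminations $\lambda$ and $\lambda'$, then the Gromov product $(\alpha_n\mid\beta_n)$ in $\CG(V)$ tends to infinity if and only if $\lambda=\lambda'$. For the direction $\lambda=\lambda'$, one argues through Teichm\"{u}ller geodesics: by \cite{mm} and the unboundedness lemma the $\sigma$-image of a Teichm\"{u}ller ray with vertical lamination $\lambda$ is an unbounded unparametrized quasigeodesic, hence converges to a point of $\partial\CG(V)$, and using the contraction property behind hyperbolicity of $\CG(V)$ together with Masur's analysis of short curves along Teichm\"{u}ller geodesics one shows that all such rays, and in fact all curve sequences converging in the coarse Hausdorff topology to the same filling $\lambda$, converge to that one point. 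For the direction $\lambda\neq\lambda'$, note that since $\lambda$ fills, the geometric intersection number of its measured-foliation representative with any representative of $\lambda'$ is positive, so by the Gardiner-Masur realization theorem there is a bi-infinite Teichm\"{u}ller geodesic with vertical foliation representing $\lambda$ and horizontal foliation representing $\lambda'$. Its $\sigma$-image is an unparametrized quasigeodesic line whose two ends escape to infinity (the shortest curves along the two ends converge in the coarse Hausdorff topology to $\lambda$ and to $\lambda'$, so the unboundedness lemma applies to each), hence it is a genuine quasigeodesic line; a quasigeodesic line in a Gromov hyperbolic space has two distinct ideal endpoints, which here are the limits of $\alpha_n$ and of $\beta_n$, so $(\alpha_n\mid\beta_n)$ stays bounded. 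Establishing that a common filling coarse Hausdorff limit forces two curve sequences to the same point of $\partial\CG(V)$ is the delicate part of this.

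Granting the estimate, I would conclude as follows. Define $\Phi\colon\EL(V)\to\partial\CG(V)$ by sending a filling lamination $\lambda$ to the common limit in $\partial\CG(V)$ of all curve sequences that converge to $\lambda$ in the coarse Hausdorff topology; such sequences exist (for instance the shortest curves along a Teichm\"{u}ller ray with vertical lamination $\lambda$), and the common limit is well defined by the estimate. Injectivity of $\Phi$ is immediate from the estimate. For surjectivity, given $\xi\in\partial\CG(V)$, take a geodesic ray to $\xi$, lift it coarsely into $\mathcal{T}(V)$ using coarse surjectivity of $\sigma$, and fellow-travel it by a Teichm\"{u}ller ray $r$; the vertical foliation of $r$ must be filling, since otherwise the shortest curves along $r$ would be disjoint from a fixed essential curve and $\sigma(r)$ would stay in a bounded neighborhood of one vertex, contradicting that it escapes to $\xi$, and then $\xi=\Phi(\lambda)$ for $\lambda$ the support of that vertical foliation. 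Since $\EL(V)$ and $\partial\CG(V)$ are separable and metrizable, to see that $\Phi$ is a homeomorphism it suffices to check sequential continuity of $\Phi$ and $\Phi^{-1}$. The curve-level fact behind this, again a consequence of the estimate and the unboundedness lemma, is that a sequence of essential simple closed curves converges to $\Phi(\lambda)$ in $\CG(V)\cup\partial\CG(V)$ if and only if it converges to $\lambda$ in the coarse Hausdorff topology; sequential continuity of $\Phi$ and $\Phi^{-1}$ then follows by a routine diagonal argument approximating laminations and boundary points by curves. Equivariance of $\Phi$ is automatic from the coarse $\MCG(V)$-equivariance of $\sigma$ and the naturality of the constructions. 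The same identification can alternatively be obtained via filling birecurrent train tracks and their nested splitting sequences, in the style of Hamenst\"{a}dt, with splitting sequences playing the role of Teichm\"{u}ller rays.
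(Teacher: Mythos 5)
The paper does not prove this statement: it is Klarreich's theorem, quoted verbatim from \cite{kla} as background (with the coarse Hausdorff description of the topology on $\EL(V)$ borrowed from Hamenst\"{a}dt), so there is no in-paper proof to compare against. Your sketch follows Klarreich's original route --- the systole map $\sigma:\mathcal{T}(V)\to\CG(V)$, the Masur--Minsky fact that Teichm\"{u}ller geodesics map to unparametrized quasigeodesics, and the Gromov-product dichotomy according to whether two filling coarse Hausdorff limits coincide --- and you correctly identify the delicate point (that a common filling limit forces a common boundary point). At the level of architecture this is the right proof.

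Two steps, however, are wrong or unjustified as written. First, your unboundedness lemma: bounded diameter in $\CG(V)$ means the curves lie within some distance $R$ of a fixed curve $\gamma$, not that they are disjoint from $\gamma$; your reduction to disjointness only handles $R\leq 1$. The correct argument is the inductive one of Masur--Minsky (Proposition 4.6 of \cite{mm}): the set of laminations arising as limits of curves within distance $R$ of $\gamma$ is shown, by induction on $R$, to contain no filling lamination. Second, your surjectivity argument asks to ``lift a geodesic ray coarsely into $\mathcal{T}(V)$ and fellow-travel it by a Teichm\"{u}ller ray.'' Teichm\"{u}ller space is not hyperbolic and $\sigma$ is not a quasi-isometry, so an arbitrary coarse lift of a $\CG(V)$-geodesic need not fellow-travel any Teichm\"{u}ller geodesic; this step has no justification. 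Klarreich instead takes a sequence of curves $\alpha_n\to\xi$, joins a basepoint to $\alpha_n$ by Teichm\"{u}ller segments, extracts a limiting Teichm\"{u}ller ray, and then shows its vertical foliation is filling and maps to $\xi$. With those two repairs (both standard, but neither cosmetic) your outline matches the proof in \cite{kla}; the train-track alternative you mention at the end is likewise a genuine second route, due to Hamenst\"{a}dt.
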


\subsection{WPD and WWPD} \label{wpdsection}

Consider a group $G$ acting by isometries on a hyperbolic metric space $X$.

\begin{defn}
We say that $g\in G$ is weakly properly discontinuous (WPD) if
\begin{itemize}
\item $g$ is loxodromic,
\item for every $x\in X$ and every $C>0$, there exists $N>0$ such that the set \[\{h\in G : d(x,h(x))\leq C, d(g^N(x),hg^N(x))\leq C\}\] is finite.
\end{itemize}
\end{defn}

Bestvina-Fujiwara proved the following result in \cite{bf}:

\begin{thm}[\cite{bf} Proposition 11] \label{WPD}
Let $V$ be finite type and $\phi\in \MCG(V)$ be pseudo-Anosov. Then $\phi$ is WPD with respect to the action of $\MCG(V)$ on $\CG(V)$.
\end{thm}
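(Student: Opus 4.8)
The plan is to combine the loxodromic dynamics of a pseudo-Anosov on $\CG(V)$ with the near-properness of the $\MCG(V)$-action along its axis. First recall (Masur--Minsky) that a pseudo-Anosov $\phi$ acts loxodromically on the hyperbolic graph $\CG(V)$, with a quasigeodesic axis $\gamma$ whose two endpoints in $\partial\CG(V)$ are, by Klarreich's Theorem \ref{cgboundary}, the attracting and repelling ending laminations $\lambda^+,\lambda^-$ of $\phi$; these are exactly the fixed points of the north--south action of $\phi$ on $\EL(V)$. Since the WPD condition for one basepoint implies it for all (replacing $x$ by a vertex within bounded distance of $\gamma$ changes the relevant sets only by a bounded ambiguity), it suffices to verify the condition for a vertex $x$ lying on $\gamma$.

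The core of the argument runs as follows. Fix $C>0$ and, arguing by contradiction, suppose that for every $N$ the set $B_N=\{h\in\MCG(V):d(x,hx)\le C,\ d(\phi^N x,h\phi^N x)\le C\}$ is infinite. For $h\in B_N$, $\delta$-hyperbolicity and thinness of geodesic triangles force the geodesic $[x,\phi^N x]$ and its image $[hx,h\phi^N x]$ to have Hausdorff distance bounded in terms of $\delta$ and $C$ only, and with matching (not swapped) endpoints; thus $h$ coarsely preserves a segment of $\gamma$ of length comparable to $N\tau$, where $\tau$ is the stable translation length of $\phi$, with a uniformly bounded coarse translation amount. Letting $N\to\infty$ and passing to a subsequence of elements of the $B_N$, such $h$ are driven to stabilize the endpoint pair $\{\lambda^+,\lambda^-\}$ of $\gamma$ in $\EL(V)$. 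One then invokes two standard facts: (i) the stabilizer $E(\phi)$ of $\{\lambda^+,\lambda^-\}$ in $\MCG(V)$ is virtually cyclic --- by the Thurston--McCarthy classification, any mapping class preserving the attracting and repelling laminations of a pseudo-Anosov shares its axis, and $E(\phi)$ is a virtually-$\Z$ extension of $\langle\phi\rangle$; and (ii) inside the virtually cyclic group $E(\phi)$, since $\phi$ is loxodromic on $\CG(V)$, only finitely many elements move $x$ by at most $C$. Together these contradict the infinitude of the $B_N$. Alternatively, and much more quickly: Bowditch's theorem that $\MCG(V)\curvearrowright\CG(V)$ is acylindrical, mentioned in the introduction, implies at once that every loxodromic element --- in particular every pseudo-Anosov --- is WPD, since the acylindricity constants furnish the required $N$ uniformly.

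The main obstacle is the step that turns coarse preservation of a long but finite subsegment of $\gamma$, with coarsely fixed endpoints, into genuine membership in $\operatorname{Stab}(\{\lambda^+,\lambda^-\})$ and hence into a finiteness statement: because $\CG(V)$ is locally infinite, a coarse symmetry of a finite subsegment of the axis is not by itself enough, and one needs a real input from curve-complex geometry to rule out infinitely many such $h$. In Bowditch's treatment this is supplied by the theory of tight geodesics; alternatively one can use the Masur--Minsky bounded geodesic image theorem to control subsurface projections along $\gamma$ and thereby bound the common stabilizer of a sufficiently long (hence filling) portion of the axis. Once that finiteness input is in place, the dynamical and group-theoretic bookkeeping sketched above is routine.
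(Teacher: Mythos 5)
First, a point of comparison: the paper does not prove this statement at all --- it is quoted verbatim from Bestvina--Fujiwara (\cite{bf}, Proposition 11) and used as a black box, so there is no internal proof to measure your argument against. Judged on its own terms, your primary argument has a genuine gap, and it is exactly the one you flag yourself. The contradiction hypothesis is that $B_N=\{h\in\MCG(V):d(x,hx)\le C,\ d(\phi^Nx,h\phi^Nx)\le C\}$ is infinite for \emph{every} $N$; to refute it you must fix a single $N$ and show that \emph{all} of the infinitely many $h\in B_N$ lie in one finite set. Your limiting step instead selects one $h_N$ from each $B_N$ and tries to push the sequence into $\operatorname{Stab}(\{\lambda^+,\lambda^-\})$. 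Even if that succeeded it would produce a single element of the stabilizer, which contradicts nothing; and it does not succeed, because an $h\in B_N$ only coarsely preserves a segment of the axis of length about $N\tau$, and in the locally infinite graph $\CG(V)$ there is no compactness with which to extract a limit or to promote coarse preservation of a finite segment to fixing $\lambda^{\pm}$. The finiteness therefore cannot come from the curve graph alone. In Bestvina--Fujiwara's actual proof the missing input is Teichm\"uller geometry rather than tight geodesics or the bounded geodesic image theorem: via Masur--Minsky's comparison between Teichm\"uller geodesics and curve-graph quasi-geodesics, an element of $B_N$ is shown to move two far-apart points of the Teichm\"uller axis of $\phi$ --- which lies in the thick part --- by a bounded amount, and properness of the $\MCG(V)$-action on the thick part then bounds $\# B_N$ for $N$ large. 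Your facts (i) and (ii), that the stabilizer of $\{\lambda^+,\lambda^-\}$ is virtually cyclic and meets $\{h:d(x,hx)\le C\}$ in a finite set, are correct but are never actually reached by your argument.

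Your fallback --- acylindricity of $\MCG(V)\curvearrowright\CG(V)$ implies WPD for every loxodromic element, hence for every pseudo-Anosov --- is logically valid and genuinely immediate. But it imports Bowditch's theorem, which is strictly stronger than, and historically later than, the statement being proved, and which this paper explicitly declines to use. As a proof of the proposition it merely replaces the citation of \cite{bf} with a citation of \cite{tight}; moreover the hard finiteness input there (tight geodesics) is of the same nature as the step your direct argument is missing.
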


Later, in \cite{tight} Bowditch proved the stronger result that the action of $\MCG(V)$ on $\CG(V)$ is \textit{acylindrical}. However, we will not use his result in this paper.

In \cite{bbf}, Bestvina-Bromberg-Fujiwara defined a weakening of the WPD condition which is useful for applications to bounded cohomology.

\begin{defn}
We say that $g\in G$ is WWPD if
\begin{itemize}
\item $g$ is loxodromic with fixed points $g^+$ and $g^-$ on $\partial X$,
\item if $h_n\in G$ with $h_n(g^+)\to g^+$ and $h_n(g^-)\to g^-$, then there exists $N>0$ such that for all $n\geq N$ we have \[h_n(g^+)=g^+ \text{ and } h_n(g^-)=g^-.\]
\end{itemize}
\end{defn}

It is a fact that WPD elements are WWPD (see e.g. Handel-Mosher \cite{wwpd} Corollary 2.4). Hence by Theorem \ref{WPD}, every pseudo-Anosov in $\MCG(V)$ is WWPD in the action of $\MCG(V)$ on $\CG(V)$. As a corollary of this result and Klarreich's Theorem \ref{cgboundary} we have the following:

\begin{cor} \label{endinglamconv}
Let $V$ be finite type and $\phi\in \MCG(V)$ be pseudo-Anosov. Let $\lambda^+, \lambda^- \in \EL(V)$ be the fixed laminations of $\phi$. Suppose that $\psi_n \in \MCG(V)$ and $\psi_n(\lambda^+)\toCH \lambda^+$ and $\psi_n(\lambda^-) \toCH \lambda^-$ as $n\to \infty$. Then there exists $N>0$ such that for all $n\geq N$, we have \[\psi_n(\lambda^+)=\lambda^+ \text{ and } \psi_n(\lambda^-)=\lambda^- \text{ for all } n\geq N.\]
\end{cor}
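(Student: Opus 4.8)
The plan is to transport the statement across Klarreich's homeomorphism $\partial\CG(V)\cong\EL(V)$ and invoke the WWPD property of $\phi$ for the action on $\CG(V)$. First I would record the standard facts: since $\phi$ is pseudo-Anosov it acts loxodromically on $\CG(V)$ (Masur--Minsky), and by Theorem \ref{WPD} together with the implication WPD $\Rightarrow$ WWPD (Handel--Mosher \cite{wwpd}), $\phi$ is WWPD for the action $\MCG(V)\curvearrowright\CG(V)$. Let $g^+,g^-\in\partial\CG(V)$ be the attracting and repelling fixed points of $\phi$. Under the equivariant homeomorphism of Theorem \ref{cgboundary}, the standard description of pseudo-Anosov dynamics on the curve graph identifies $g^\pm$ with the stable and unstable ending laminations $\lambda^\pm\in\EL(V)$ of $\phi$; the ending-lamination condition (minimality and filling) is exactly what makes these well-defined points of $\partial\CG(V)$.

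Next I would use the observation of Hamenst\"adt quoted above that the topology on $\EL(V)\cong\partial\CG(V)$ coincides with the coarse Hausdorff topology. Thus for any sequence of ending laminations $\mu_n$ we have $\mu_n\to\lambda$ in $\partial\CG(V)$ if and only if $\mu_n\toCH\lambda$. Since each $\psi_n\in\MCG(V)$ and the homeomorphism of Theorem \ref{cgboundary} is $\MCG(V)$-equivariant, the point $\psi_n(g^\pm)\in\partial\CG(V)$ corresponds to $\psi_n(\lambda^\pm)\in\EL(V)$. Hence the hypotheses $\psi_n(\lambda^+)\toCH\lambda^+$ and $\psi_n(\lambda^-)\toCH\lambda^-$ are precisely the statements $\psi_n(g^+)\to g^+$ and $\psi_n(g^-)\to g^-$ in $\partial\CG(V)$.

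Now apply the WWPD property of $\phi$: there is $N>0$ such that $\psi_n(g^+)=g^+$ and $\psi_n(g^-)=g^-$ for all $n\geq N$. Transporting back through the bijective, equivariant correspondence $\partial\CG(V)\cong\EL(V)$ yields $\psi_n(\lambda^+)=\lambda^+$ and $\psi_n(\lambda^-)=\lambda^-$ for all $n\geq N$, which is the assertion of the corollary.

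The step that requires the most care is the bookkeeping of these identifications: checking that the fixed points of $\phi$ on $\partial\CG(V)$ really are $\lambda^\pm$ (and not merely laminations supported on the same geodesic lamination — ruled out by the ending-lamination condition), that the Klarreich homeomorphism is genuinely $\MCG(V)$-equivariant (this is part of Theorem \ref{cgboundary}), and that coarse Hausdorff convergence of ending laminations to an ending lamination agrees with convergence in the Gromov boundary (Hamenst\"adt). None of these points is deep, but they constitute the entire content of the reduction; once they are in place the conclusion is immediate from the definition of WWPD.
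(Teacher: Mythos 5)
Your argument is exactly the paper's intended one: the corollary is deduced by combining Theorem \ref{WPD} with the fact that WPD implies WWPD, identifying the fixed points of $\phi$ on $\partial\CG(V)$ with $\lambda^\pm$ via Klarreich's equivariant homeomorphism (Theorem \ref{cgboundary}), and using Hamenst\"adt's observation that convergence in $\partial\CG(V)$ is coarse Hausdorff convergence. The proposal is correct and matches the paper's (unwritten but clearly indicated) proof.
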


\subsection{Quasimorphisms and second bounded cohomology}

Let $G$ be a group.

\begin{defn}
Let $D\geq 0$. A map $q:G\to \R$ is a \textit{quasimorphism} of defect $\leq D$ if for all $g,h\in G$, we have \[|q(gh)-q(g)-q(h)|\leq D.\] The quasimorphism $q$ is \textit{trivial} if there exists $C>0$ and a homomorphism $r:G\to \R$ with \[|r(g)-q(g)|\leq C\] for all $g\in G$.
\end{defn}

Denote by $QH(G)$ the vector space of all quasimorphisms $G\to \R$. The trivial quasimorphisms form a subspace $QH_T(G)$ of $QH(G)$ and we denote \[\widetilde{QH}(G)=QH(G)/QH_T(G)\] which we will call the \textit{space of nontrivial quasimorphisms} of $G$.

There is a well-known identification of $\widetilde{QH}(G)$ with a subspace of the \textit{second bounded cohomology} of $G$. Namely, let $H_b^2(G;\R)$ denote the second bounded cohomology of $G$ with trivial real coefficients. There is a forgetful homomorphism $H_b^2(G;\R)\to H^2(G;\R)$ and $\widetilde{QH}(G)$ is identified with the kernel of this homomorphism.

\subsection{Loop graphs} \label{loopdefn}

Let $S$ be a surface of infinite type with an isolated puncture $p$. Associated to $S$ is an infinite diameter hyperbolic graph $L(S;p)$ defined by Bavard-Walker in \cite{simultaneous}. We now recall the definition.

Denote by $E(S)$ the space of ends of $S$. An embedding $l:(0,1)\to S$ is a \textit{(simple) loop} based at $p$ if it can be continuously extended to a map $[0,1]\to S\cup E(S)$ by setting $l(0)=l(1)=p$. The \textit{loop graph} $L(S;p)$ has as a set of vertices the isotopy classes of simple loops in $S$ based at $p$. Two vertices are joined by an edge if the corresponding isotopy classes have disjoint representatives. For use later, we also recall the definition of a \textit{(simple) short ray}. An embedding $l:(0,1)\to S$ is a short ray based at $p$ if can be continuously extended to a map $[0,1]\to S\cup E(S)$ with $l(0)=p$ and $l(1)\in E(S) \setminus \{p\}$.

\begin{thm}[\cite{simultaneous} Theorem 5.4.1]
The loop graph $L(S;p)$ is hyperbolic.
\end{thm}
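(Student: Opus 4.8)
The plan is to prove hyperbolicity via a \emph{guessing geodesics} criterion (due to Bowditch and to Masur--Schleimer): it suffices to produce a uniform constant $M$ and, for every ordered pair of vertices $a,b$ of $L(S;p)$, a connected subgraph $\mathcal{P}(a,b)$ containing $a$ and $b$ such that (i) $\diam\mathcal{P}(a,b)\le M$ whenever $d(a,b)\le 1$, and (ii) $\mathcal{P}(a,b)\subseteq N_M\bigl(\mathcal{P}(a,c)\cup\mathcal{P}(c,b)\bigr)$ for every triple $a,b,c$. The subgraphs $\mathcal{P}(a,b)$ will be \emph{unicorn paths} obtained by surgery, and the verification of (i)--(ii) will follow the Hensel--Przytycki--Webb slim-unicorn argument, adapted from arcs to loops based at $p$.

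To set up, fix a complete hyperbolic metric on $S$ in which $p$ is a cusp, so that every vertex of $L(S;p)$ has a geodesic representative and any two such representatives meet transversely in finitely many interior points; moreover, any finite collection of simple loops based at $p$ is supported in a finite-type subsurface of $S$ having $p$ as a puncture, so minimal position, finiteness of intersection numbers, and all the combinatorics below take place in a finite-type piece. Given loops $a,b$ realized in minimal position, orient them; for each interior intersection point $x$ of $a$ and $b$, concatenating the sub-arc of $a$ from $p$ to $x$ with the sub-arc of $b$ from $x$ to $p$ produces a loop based at $p$. Discarding the non-embedded ones (replacing them by embedded sub-loops) and ordering the rest by the position of $x$ along $a$, one obtains a sequence $a=c_0,c_1,\dots,c_m=b$ with consecutive terms disjoint: this is $\mathcal{P}(a,b)$. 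If $a$ and $b$ are already disjoint (in particular if $d(a,b)\le 1$) this path is just the edge $\{a,b\}$, which gives (i) with $M=1$.

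The substance is (ii). Given $a,b,c$ and a unicorn loop $u\in\mathcal{P}(a,b)$ built from a sub-arc $\alpha$ of $a$ and a sub-arc $\beta$ of $b$: if $\alpha$ is disjoint from $c$, then $u$ coincides with, or is disjoint from, a unicorn loop of $\mathcal{P}(a,c)$; otherwise, following $\alpha$ to its first intersection with $c$ exhibits a unicorn loop of $\mathcal{P}(a,c)$ or of $\mathcal{P}(c,b)$ disjoint from $u$. In either case $d\bigl(u,\mathcal{P}(a,c)\cup\mathcal{P}(c,b)\bigr)$ is uniformly bounded, which is (ii). The main obstacle I expect is making this combinatorial argument go through cleanly in the loop setting: since both endpoints of a loop sit at the single point $p$, the notion of ``first intersection along $a$'' and the surgeries themselves depend on choices of orientation, the surgered loop $\alpha\cup\beta$ may fail to be embedded and must be corrected, and one must check that these corrections preserve the disjointness relations that (i) and (ii) rely on. Granting this bookkeeping, the criterion returns a $\delta$ depending only on $M$, so $L(S;p)$ is $\delta$-hyperbolic; connectedness of $L(S;p)$, needed to speak of the graph metric, is a byproduct, since every $\mathcal{P}(a,b)$ joins $a$ to $b$.
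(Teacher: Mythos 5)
This theorem is quoted from Bavard--Walker and the present paper contains no proof of it; your unicorn-path plus guessing-geodesics plan is essentially the argument of the cited source (Theorem 5.4.1 of \cite{simultaneous}, which follows Bavard's adaptation of the Hensel--Przytycki--Webb slim-unicorn technique to loops based at $p$). The one definitional point to adjust is that unicorn loops should be formed only at those intersection points $x$ where the subarcs $a|(p,x]$ and $b|[x,p)$ meet solely at $x$ --- exactly as in this paper's description of the unicorn paths $P(c,l)$ --- rather than ``repairing'' non-embedded concatenations afterwards, but this falls within the bookkeeping you already flagged.
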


Denote by $\MCG(S;p)$ the subgroup of $\MCG(S)$ consisting of mapping classes which fix the puncture $p$. In particular, if $p$ is the only isolated puncture of $S$ then $\MCG(S)=\MCG(S;p)$. There is an obvious action of $\MCG(S;p)$ on $L(S;p)$ by isomorphisms.

\subsection{The boundary of the loop graph}
\label{boundarybackground}

\subsubsection{A hyperbolic metric}

Bavard-Walker identify the Gromov boundary of $L(S;p)$ with a space of rays based at $p$. To define this space of rays, it is necessary to choose a convenient hyperbolic metric on $S$. For our purposes, it is sufficient to choose a complete hyperbolic metric on $S$ of the first kind. Recall that a hyperbolic metric on a surface is said to be \textit{of the first kind} if the limit set of the corresponding Fuchsian group acting on $\H^2$ is equal to all of $\partial \H^2$.

We will frequently conflate loops and short rays with their geodesic representatives, and therefore we also assume that all loops and short rays have been put pairwise in minimal position.

Given an essential finite type subsurface $V\subset S$, we may realize the components of $\partial V$ by their geodesic representatives in $S$. The component of the complement of these geodesics containing a neighborhood of $p$ is a subsurface of $S$ homeomorphic to the interior of $V$. We will frequently conflate $V$ with this representative. Note that this representative has the property that any simple complete geodesic on $S$ which is contained in $V$ \textit{up to isotopy} is contained in this representative.

\subsubsection{The boundary}

All of the remaining material in this section is taken from Bavard-Walker \cite{boundary} and \cite{simultaneous}.

With our hyperbolic metric in hand, we may define a \textit{long ray} on $S$ to be a simple bi-infinite geodesic in $S$ which limits to $p$ at one end and does not limit to any point of $E(S)$ on the other end.

To investigate the boundary of $L(S;p)$, Bavard-Walker define the \textit{completed ray graph} $\RG(S;p)$. The vertices of $\RG(S;p)$ are the isotopy classes of all loops, short rays, and long rays on $S$. Two vertices are joined by an edge if the corresponding isotopy classes have disjoint representatives.

The graph $\RG(S;p)$ has numerous components:

\begin{thm}[\cite{simultaneous} Theorem 5.7.1]
\leavevmode
\label{components}

\begin{itemize}
\item There is a component of $\RG(S;p)$ containing all loops and short rays. This component is quasi-isometric to $L(S;p)$.
\item The other components of $\RG(S;p)$ are \textit{cliques} and each clique consists of long rays. 
\end{itemize}
\end{thm}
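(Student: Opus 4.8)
The plan is to establish the three assertions in turn: that all loops and short rays lie in a single component $\mathcal{C}_0$ of $\RG(S;p)$; that the full subgraph of $\RG(S;p)$ spanned by the loop vertices --- which is literally $L(S;p)$ --- includes into $\mathcal{C}_0$ as a quasi-isometry; and that every other component is a clique consisting only of long rays. I will take for granted the basic fact that $L(S;p)$ is connected (it is used implicitly in calling it an infinite diameter hyperbolic graph).

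\emph{Loops and short rays form one component.} Since $L(S;p)$ is connected, all loop vertices lie in one component $\mathcal{C}_0$. Given a short ray $r$ based at $p$ and exiting an end $e\neq p$, I would exhibit a loop disjoint from it: cutting $S$ along $r$ leaves an infinite type surface, which still carries an essential simple loop $\gamma$ based at $p$ disjoint from $r$; concretely one may take $\gamma$ to run alongside one side of $r$ out to a small circle around $e$ and return along the other side. Thus every short ray is adjacent to a loop and lies in $\mathcal{C}_0$.

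\emph{The quasi-isometry.} The inclusion $L(S;p)\hookrightarrow\mathcal{C}_0$ is $1$-Lipschitz, so it suffices to build a coarse inverse: show (a) every vertex of $\mathcal{C}_0$ is within uniformly bounded distance of a loop, and (b) any edge path in $\mathcal{C}_0$ between two loops can be replaced by a path through loops only of comparable length. Part (b) is the standard surgery move: for a short or long ray $v_i$ occurring between $v_{i-1}$ and $v_{i+1}$ along the path, replace $v_i$ by a loop read off from the boundary of a thin regular neighbourhood of $v_i$ (closing up appropriately near $p$, and near the end $v_i$ exits if it is a short ray); since $v_{i-1}$ and $v_{i+1}$ are each disjoint from $v_i$, they remain disjoint from the new loop, and performing this along the path costs only a bounded multiplicative factor. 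Part (a) follows for short rays from Step 1, and for a long ray in $\mathcal{C}_0$ by chasing a shortest path to the loops and applying the same surgery.

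\emph{Other components are cliques of long rays.} A component $\mathcal{C}_1\neq\mathcal{C}_0$ contains no loop and no short ray by Step 1, hence consists of long rays, each of which therefore meets every loop and every short ray. To see $\mathcal{C}_1$ is a clique I would pass to the accumulation lamination $\mu_\ell$ of a long ray $\ell$ (the $\omega$-limit set of its non-$p$ end, a nonempty geodesic lamination since $\ell$ is simple and does not converge to an end), so that $\overline\ell=\ell\cup\mu_\ell$. The goal is to show: being disjoint from every loop forces $\ell$ to be ``as filling as seen from $p$,'' i.e.\ $\ell$ is a geodesic emanating from the cusp at $p$ that spirals onto a lamination $\mu$ whose complementary region containing the cusp $p$ is a single crown; and then that any long ray $\ell'$ disjoint from $\ell$ is disjoint from $\mu$ (being disjoint from $\overline\ell$), so $\ell'$, being non-compact and issuing from the same cusp, is another spike geodesic of the same crown spiralling onto $\mu$. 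Since a crown at a cusp has only finitely many spike geodesics and these may be realized pairwise disjointly, $\mathcal{C}_1$ is a clique.

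\emph{Main obstacle.} The crux is the third step: turning ``meets every loop based at $p$'' into the geometric statement that $\ell$ spirals onto a lamination that fills the surface near $p$, and the rigidity statement that a long ray disjoint from such an $\ell$ must itself be a spike geodesic of the same crown. Both require precise control of simple complete geodesics near the isolated puncture $p$ and near the wild end, and this is exactly where the hypothesis that a long ray does not converge to an end of $S$ is essential (it is what prevents the relation from failing transitivity, as it would for short-ray-like behaviour). By comparison, the surgery estimates in Step 2 are routine, though they must be made quantitative to extract honest quasi-isometry constants.
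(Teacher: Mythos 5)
First, a point of orientation: this theorem is quoted from Bavard--Walker \cite{simultaneous} and is not proved in the present paper, so there is no internal proof to compare against; the actual argument in \cite{simultaneous} runs through the equator coding of rays (the $n$-beginning relation) and the conical cover, not through geodesic lamination theory. Your first step and the identification of which vertices can lie outside the main component are fine, and your closing paragraph correctly locates the crux in the clique statement.

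The genuine gap is in your third step. You import the structure theory of geodesic laminations on \emph{finite type} surfaces --- that the $\omega$-limit set $\mu_\ell$ of a simple geodesic is a minimal lamination filling a subsurface, that the complementary region containing the cusp $p$ is a finite-sided once-punctured crown, and that such a crown has only finitely many spike geodesics --- and none of this is available on an infinite type surface. A long ray in a component $\mathcal{C}_1\neq\mathcal{C}_0$ need not be contained in any finite type subsurface (note that Lemmas \ref{subimage} and \ref{preservewitness} of this paper invoke exactly these lamination facts only \emph{after} first trapping the ray in a finite type subsurface); $\mu_\ell$ need not be minimal or compactly supported, and the complementary region at $p$ can be an infinite-sided punctured polygon. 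Indeed, your conclusion that $\mathcal{C}_1$ consists of the finitely many spikes of a crown would prove that \emph{every} clique is finite, which is false: cliques of high-filling rays can be infinite, which is precisely why the finiteness of $\cl^{\pm}(\phi)$ for loxodromic $\phi$ is a separate theorem (\cite{simultaneous} Theorem 7.1.1) rather than a triviality. You also only argue that rays \emph{adjacent} to $\ell$ spiral onto $\mu_\ell$; to conclude $\mathcal{C}_1$ is a clique you need a transitivity statement covering every ray at finite distance from $\ell$, and that is the heart of Bavard--Walker's equator argument. A secondary problem sits in step 2: the surgery ``boundary of a regular neighbourhood of $v_i$, closed up near $p$'' applied to a long ray produces a loop bounding a once-punctured disk around $p$, hence an inessential vertex, so your part (b) fails for long rays; and part (a) for long rays, as stated, is circular. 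Replacing a long ray of $\mathcal{C}_0$ by a uniformly nearby loop is exactly the non-routine content of the quasi-isometry claim.
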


We call a ray \textit{high-filling} if it lies in a clique as described in the second bullet point the Theorem \ref{components}. Denote by $\HF(S;p)$ the set of high-filling rays based at $p$.

The Gromov boundary of $L(S;p)$ will be identified with a quotient of $\HF(S;p)$. However, we first have to recall the definition of the topology on $\HF(S;p)$. Bavard-Walker define the topology using an ``equator'' on $S$, which is a set of proper geodesic arcs on $S$ that cut it into two infinite-sided polygons. Since these polygons are simply-connected, one may specify rays and loops up to some amount of ambiguity by the sequence of arcs of the equator that they pass through. One may also define two rays to be close if they pass through the same arcs of the equator in the same order for a long time.

See Figure \ref{equator} for an example of an equator on the plane minus a Cantor set.

\begin{figure}[h]
\centering
\def\svgwidth{0.5\textwidth}
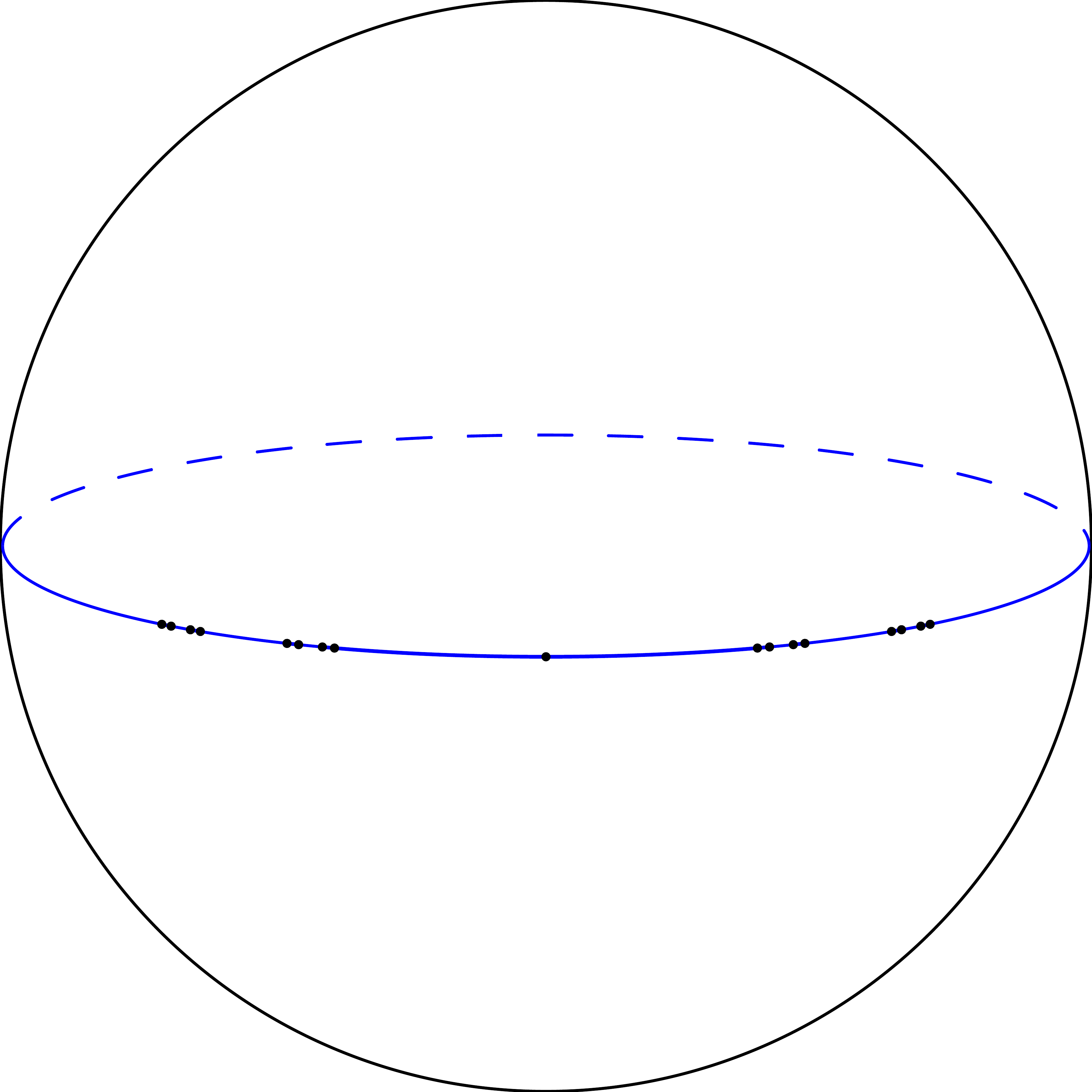

\caption{An equator for the plane minus a Cantor set is given by the countably many arcs drawn in blue.}
\label{equator}
\end{figure}

The existence of an equator is guaranteed by the following lemma:

\begin{lem}[\cite{simultaneous} Lemma 2.3.2] \label{equator}
There exists a collection $\mathcal{W}$ of mutually disjoint proper arcs in $S$ such that each end of $S$ meets at most finitely many of the arcs in $\mathcal{W}$ and the complement of $\mathcal{W}$ consists of exactly two components, each of which is simply connected.
\end{lem}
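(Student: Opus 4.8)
The plan is to construct $\mathcal{W}$ as a limit along an exhaustion of $S$. Fix an exhaustion $S=\bigcup_{n\ge 1}S_n$ by compact connected subsurfaces with $\partial S_1\ne\emptyset$ and $S_n\subset\operatorname{int}(S_{n+1})$, and write $A_n:=\overline{S_{n+1}\setminus S_n}$; refining the exhaustion if necessary, we may assume that every component of $A_n$ meets $\partial S_{n+1}$ and has small complexity (an annulus, a pair of pants, or a one-holed torus), which will make the bookkeeping below routine. I will build finite systems of disjoint simple arcs $\mathcal{W}_n\subset S_n$, with endpoints on $\partial S_n$ and with $\mathcal{W}_{n+1}\cap S_n=\mathcal{W}_n$ (each arc of $\mathcal{W}_{n+1}$ either prolonging an arc of $\mathcal{W}_n$ across $A_n$ or lying inside $A_n$ with both endpoints on $\partial S_{n+1}$), maintaining the invariant that $S_n$ cut along $\mathcal{W}_n$ is exactly two disks. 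Passing to the limit, $\mathcal{W}:=\bigcup_n\mathcal{W}_n$ is then a disjoint union of properly embedded arcs, and $S\setminus\mathcal{W}$ is the increasing union of the two disks, so it has exactly two components, both simply connected.

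The base case is easy: $S_1$ is a compact surface with nonempty boundary, so a finite family of disjoint essential arcs cuts it into one disk, and one more arc cuts that disk into two. For the inductive step, suppose $\mathcal{W}_n$ cuts $S_n$ into disks $P_1,P_2$. Then $\partial S_n\setminus\mathcal{W}_n$ is a finite union of arcs, each contained in $\partial P_1$ or in $\partial P_2$, and it is enough to produce a disjoint arc system $\mathcal{V}\subset A_n$, with exactly one arc running into $A_n$ from each point of $\mathcal{W}_n\cap\partial S_n$ and possibly further arcs having both endpoints on $\partial S_{n+1}$, such that $A_n$ cut along $\mathcal{V}$ is a disjoint union of disks, each meeting $\partial S_n$ in exactly one arc. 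Given such a $\mathcal{V}$, assigning each of these disks to whichever of $P_1,P_2$ contains that arc and gluing them back on (one boundary arc at a time, using that a disk glued to a disk along a boundary arc is again a disk) recovers $S_{n+1}$ as two disks, and we set $\mathcal{W}_{n+1}:=\mathcal{W}_n\cup\mathcal{V}$, concatenating matched arcs across $\partial S_n$.

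There are two points that require work. The first is the existence of $\mathcal{V}$: this is a relative cutting statement for the compact surface $A_n$ carrying a prescribed finite marking on its $\partial S_n$-boundary, which I would prove by induction on the complexity $-\chi(A_n)$, first cutting $A_n$ down to a disk using arcs with endpoints on $\partial S_{n+1}$ and then subdividing that disk so each resulting piece captures exactly one component of $\partial S_n\setminus\mathcal{W}_n$; with the $A_n$ of small complexity this is short. The second, and I expect the genuine obstacle, is the clause that each end of $S$ meets only finitely many arcs of $\mathcal{W}$: the arcs must be prolonged so that once an arc enters a component of $A_n$ lying over a shrinking neighborhood of some end it keeps following that neighborhood, and the exhaustion together with the choices of new arcs in the $A_n$ must be coordinated with the end space of $S$ so that only finitely many arcs are ever routed toward any single end. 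Establishing that coordination, rather than the cutting argument (which is standard surface topology), is where the real care is needed, and completing it yields the collection $\mathcal{W}$ asserted by the lemma.
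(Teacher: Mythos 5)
First, a point of reference: the paper does not prove this statement at all --- it is imported verbatim, with attribution, as Lemma 2.3.2 of Bavard--Walker \cite{simultaneous}, so there is no internal proof to compare your argument against. Judged on its own terms, your exhaustion scheme is a reasonable skeleton, and the pieces you actually carry out are fine modulo routine checks: the base case, the reassembly of the cut-up pieces of $A_n$ onto the two disks, the passage to the limit for the ``two simply connected components'' clause, and local finiteness of $\mathcal{W}$ (which your construction does give, since the arcs introduced at stage $n+1$ are disjoint from $\operatorname{int}(S_n)$ and hence from any fixed compact set once $n$ is large).

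However, the proposal is not a proof, for the reason you yourself flag at the end: the clause that each end of $S$ meets only finitely many arcs of $\mathcal{W}$ is precisely the part of the lemma that does not come for free, and you close by saying that the required coordination ``is where the real care is needed'' without supplying it. This is a genuine gap, not bookkeeping. Concretely, if $e$ is an end of $S$ accumulated by genus or by other ends, then for every $n$ the component of $A_n$ adjacent to $e$ carries essential topology, so your system $\mathcal{V}$ is forced to contain genuinely new arcs inside every neighborhood of $e$; each such arc must then be prolonged at all later stages, and a greedy prolongation (say, always continuing into the complementary component in which the arc was born) drives all of them into $e$, so that $e$ meets infinitely many arcs and the conclusion fails. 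One therefore needs a deliberate routing scheme that sends the newly created arcs back out toward other ends --- without overloading those ends in turn --- and defining and verifying such a scheme is exactly the content you have deferred. A secondary, smaller gap is the existence of the relative cutting system $\mathcal{V}$ itself, which is only asserted ``by induction on complexity''; that step is standard surface topology and I would let it pass, but as written both load-bearing steps of the argument are left to the reader, and the first of them is the theorem.
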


We say that two (long or short) rays or loops $l$ and $k$ \textit{$n$-begin like} each other if they cross the same first $n$ arcs of $\mathcal{W}$ in the same order (after being endowed with some orientation beginning at $p$). Now to introduce the topology on $\HF(S;p)$, we consider $l\in \HF$ and denote by $N(l,n)$ the set of high-filling rays $k\in \HF$ that $n$-begin like $l$. The sets $N(l,n)$ for $l\in \HF$ and $n\geq 0$ form a basis of open sets for a topology on $\HF$.

We may equip $\HF$ with the equivalence relation $\sim$ where $l\sim k$ if and only if $l$ and $k$ lie in the same clique of $\RG(S;p)$. The quotient space $\E(S;p)=\HF(S;p)/\sim$ inherits a quotient topology and may be identified with the set of cliques of high-filling rays in $\RG(S;p)$.

\subsubsection{The conical cover}

We wish to state finally Bavard-Walker's theorem that $\E(S;p)$ is equivariantly homeomorphic to $\partial L(S;p)$. However, there is a basic subtlety to address: $\MCG(S;p)$ admits an obvious action on $\partial L(S;p)$ but no obvious action on $\E(S;p)$. Namely, if $\phi\in \MCG(S;p)$ then $\phi$ may not be represented by any quasi-isometry of the chosen metric on $S$ and hence the image of a high-filling ray under $\phi$ may not even by a quasi-geodesic.

To overcome this difficulty, Bavard-Walker consider the \textit{conical cover} $\hat{S}$ of $S$ which is the cover of $S$ corresponding to the cyclic subgroup of $\pi_1(S)$ generated by a simple loop around $p$. The surface $\hat{S}$ has an induced hyperbolic metric which is conformal to the punctured disk. Let $\hat{p}$ denote the puncture of $\hat{S}$ and $\partial \hat{S}$ the Gromov boundary of $\hat{S}$ minus $\hat{p}$, so that $\partial \hat{S} \cong S^1$. Each short ray or loop $l$ on $S$ has a unique lift to a bi-infinite geodesic in $\hat{S}$ asymptotic to $\hat{p}$ on one end and to $\partial \hat{S}$ on the other end. Bavard-Walker show in \cite{simultaneous} that endpoints of lifts of short rays and loops are dense. Using that any homeomorphism of $S$ fixing $p$ permutes the set of short rays and loops, they show that any such homeomorphism lifts to a homeomorphism of $\hat{S}$ which \textit{extends continuously} to a homeomorphism of $\partial \hat{S}$. Moreover, this continuous extension preserves the set of endpoints of high-filling rays. This gives the desired action of $\MCG(S;p)$ on $\HF(S;p)$ by homeomorphisms and we also obtain a quotient action $\MCG(S;p)\curvearrowright \E(S;p)$.

With this action being defined, we have the following theorem of Bavard-Walker:

\begin{thm}[\cite{simultaneous} Theorem 6.6.1]
There is an equivariant homeomorphism $G:\partial L(S;p)\to \E(S;p)$.
\end{thm}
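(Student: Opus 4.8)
The plan is to treat this as a Klarreich-type theorem (in the spirit of Theorem~\ref{cgboundary}), with loops and high-filling rays on $S$ playing the roles of curves and ending laminations, and with the crucial limiting process carried out in the conical cover $\hat S$ rather than in $S$ itself. Since $L(S;p)$ is hyperbolic, Gromov's description identifies $\partial L(S;p)$ with equivalence classes of quasigeodesic rays, two being identified when they lie at bounded Hausdorff distance. So I would construct a map $\Phi$ from such equivalence classes to the set of cliques of high-filling rays, construct an inverse $\Psi$, and then check that both are continuous for the relevant topologies and that $\Phi$ is $\MCG(S;p)$-equivariant.

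To define $\Phi$, I would represent $\xi\in\partial L(S;p)$ by a geodesic ray $(l_n)_{n\ge 0}$ of loops, lift each $l_n$ to its unique geodesic lift in $\hat S$ asymptotic to $\hat p$, and let $x_n\in\partial\hat S\cong S^1$ be the other endpoint. The first point to establish is that $(x_n)$ converges in $\partial\hat S$: when $d_{L(S;p)}(l_m,l_n)$ is small the lifts can be realized with few intersections in $\hat S$, so their endpoints are $\partial\hat S$-close, and the geodesic (not merely divergent) character of $(l_n)$ prevents $(x_n)$ from oscillating, giving $x_n\to x_\infty$. The second, and central, point is that $x_\infty$ is the endpoint of the lift of a high-filling ray $r$: if it were the endpoint of a loop or short ray $k$, then $k$ would lie at bounded $L(S;p)$-distance from all large $l_n$, contradicting $(l_n)\to\infty$; and a long ray that is not high-filling lies in the main component by Theorem~\ref{components}, so the same argument rules it out. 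I would then set $\Phi(\xi)$ to be the clique of $r$, and check independence of the chosen subsequence and of the geodesic representative of $\xi$ using fellow-traveling of geodesic rays with a common endpoint together with the fact that disjoint high-filling rays share a clique.

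For the inverse, given a clique $c$ I would pick $r\in c$ and, using that endpoints of lifts of loops and short rays are dense in $\partial\hat S$, choose loops $l_n$ whose lift-endpoints converge to that of $r$, arranged so that $l_n$ and $l_{n+1}$ are disjoint and $l_n$ $n$-begins like $r$. Then $(l_n)$ should be a quasigeodesic ray in $L(S;p)$: the upper bound on $d_{L(S;p)}(l_n,l_m)$ comes from the disjointness/nesting, and properness from the fact that a bounded $L(S;p)$-neighborhood of $l_n$ cannot contain loops that $m$-begin like $r$ for $m\gg n$. This yields $\Psi(c)\in\partial L(S;p)$, independent of choices, and $\Phi\circ\Psi$, $\Psi\circ\Phi$ are then bookkeeping. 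Continuity I would check on the standard bases: the sets $N(l,n)$ generating the begin-like topology on $\HF(S;p)$ (hence the quotient topology on $\E(S;p)$) versus shadows of geodesic rays for the Gromov topology on $\partial L(S;p)$ — Gromov-close boundary points have geodesic rays that fellow-travel for a long time, hence loops that $n$-begin alike for large $n$, and conversely. For equivariance I would use that $\MCG(S;p)$ acts on $\E(S;p)$ through the continuous extensions to $\partial\hat S$ of lifts of homeomorphisms, which preserve the endpoints of high-filling rays: if $\phi\in\MCG(S;p)$ with lift $\hat\phi$, then $\phi$ carries $(l_n)$ to a quasigeodesic ray representing $\phi\xi$ and carries $x_n$ to $\hat\phi(x_n)$, so passing to limits and using continuity of $\hat\phi|_{\partial\hat S}$ gives $\Phi(\phi\xi)=\phi\cdot\Phi(\xi)$.

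The hard part, as usual in results of this type, is the pair of convergence statements: that a geodesic ray of loops converges in $\partial\hat S$ precisely to the endpoint of a high-filling ray, and dually that an approximating sequence of a high-filling ray is a genuine quasigeodesic rather than merely a sequence escaping to infinity. In Klarreich's finite-type setting these are handled by nearest-point projections and the geometry of the curve graph; here they must be redone with the equator combinatorics of Lemma~\ref{equator} and the structure of the completed ray graph $\RG(S;p)$, and I expect essentially all of the real work to be concentrated there.
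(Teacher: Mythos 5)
First, a point of order: this theorem is not proved in the paper at all --- it is imported verbatim from Bavard--Walker (\cite{simultaneous}, Theorem 6.6.1), and the only thing the paper supplies is a description of the inverse map $F=G^{-1}$ via unicorn paths $P(c,l)$. So your proposal can only be compared against that description and against the way the theorem is used later (Lemmas \ref{topology} and \ref{subimage}).

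Measured against that, your outline has one step that is actually wrong and two that are deferred exactly where the content lies. The wrong step is the claim that for a geodesic ray $(l_n)$ of loops the endpoints $x_n$ of the lifts in $\partial\hat S$ converge to a single point $x_\infty$, the ``geodesic character preventing oscillation.'' What is true (and what Lemma \ref{topology} and the proof of Lemma \ref{subimage} record from \cite{boundary}) is only that every cover-convergent subsequence of $(l_n)$ limits to a high-filling ray, and that all such subsequential limits lie in one clique; a clique generally contains several rays with distinct endpoints on $\partial\hat S$, and $x_n$ can accumulate on all of them. This is precisely why the boundary is $\E(S;p)=\HF(S;p)/\!\sim$ rather than $\HF(S;p)$, so an argument producing a single limit point proves something false and misidentifies the target space. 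The deferred steps are (i) that subsequential cover-limits of a sequence going to infinity in $L(S;p)$ are high-filling --- your ``if it were a loop or short ray\dots'' argument only rules out the main component of $\RG(S;p)$; you still must show the limit is a long ray lying in a clique and that different subsequences land in the \emph{same} clique --- and (ii) that your approximating sequence for a high-filling ray is an unbounded quasigeodesic. Bavard--Walker handle (ii) by replacing your ad hoc sequence with the unicorn path $P(c,l)$, whose combinatorics give both the upper distance bounds and the escape to infinity; your properness claim (``a bounded neighborhood of $l_n$ cannot contain loops that $m$-begin like $r$ for $m\gg n$'') is unjustified and is essentially equivalent to what needs proving. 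As a roadmap the proposal has the right shape; as a proof it is missing the clique-level correction and all of the hard estimates.
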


We denote $G^{-1}=F:\E\to \partial L$. We give a brief description of the homeomorphism $F$. Given a loop $c$ and a ray $l$, orient $c$ and $l$ beginning from $p$. With such orientations in hand, if $q\in c$ then we may denote by $c|(p,q]$ the unique subarc of $c$ bounded by $p$ and $q$ whose orientation matches that of $c$ when it is oriented from $p$ to $q$. Similarly, $l|(p,q]$ denotes the unique subarc of $l$ from $p$ to $q$. If $c$ and $l$ intersect at the point $q$ and the arcs $c|(p,q]$ and $l|(p,q]$ meet only at the point $q$, then $c|(p,q] \cup l|[q,p)$ is a loop, which we call a \textit{unicorn} between $c$ and $l$. If $l$ happens to be high-filling then there is an infinite path of such unicorns $P(c,l)\subset L(S;p)$ beginning at $c$. Moreover, $P(c,l)$ converges to a point of $\partial L(S;p)$. Then $F(x)$ turns out to be equal to $[P(c,l)]$ (the point on $\partial L(S;p)$ which $P(c,l)$ converges to) where $x$ is the clique containing the high-filling ray $l$. See \cite{boundary} and \cite{simultaneous} for more details.

Any loxodromic element $\phi \in \MCG(S;p)$ fixes exactly two cliques $\cl^+(\phi)$ and $\cl^-(\phi)$ in $\E$, which are attracting and repelling, respectively. Moreover we have the following result of Bavard-Walker:

\begin{thm}[\cite{simultaneous} Theorem 7.1.1]
The cliques $\cl^+(\phi)$ and $\cl^-(\phi)$ are both finite.
\end{thm}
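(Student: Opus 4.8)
It is enough to treat $\cl^+(\phi)$, since $\cl^-(\phi)=\cl^+(\phi^{-1})$. The plan is to pass to the conical cover and rephrase the statement as one about an invariant set of a circle homeomorphism. Each high-filling ray $l$ lifts to a unique bi-infinite geodesic $\hat l$ in $\hat S$ with one end at $\hat p$ and the other end a point $e(l)\in\partial\hat S\cong S^1$, and the map $l\mapsto e(l)$ is injective: $\hat l$ is the geodesic of $\hat S$ joining $\hat p$ to $e(l)$, and this geodesic maps injectively onto $l$ under $\hat S\to S$ because no nontrivial power of the parabolic generating $\pi_1(\hat S)$ can fix the point $e(l)\neq\hat p$. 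Writing $\hat\phi$ for the homeomorphism of $\partial\hat S$ induced by $\phi$, we have $e(\phi(l))=\hat\phi(e(l))$; since $\phi$ fixes the clique $\cl^+(\phi)$ setwise and merely permutes its rays, the set $X:=e(\cl^+(\phi))\subset\partial\hat S$ is $\hat\phi$-invariant, and the goal becomes the finiteness of $X$.

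The argument would then combine two ingredients. The first is the loxodromic dynamics: since $\phi$ is loxodromic on the hyperbolic graph $L(S;p)$ it has North--South dynamics on $\partial L(S;p)$, and transporting this through the equivariant homeomorphism $F\colon\E(S;p)\to\partial L(S;p)$ and its description via unicorn paths, one sees that powers of $\phi$ strongly contract, in the topology on $\HF(S;p)$ generated by the sets $N(l,n)$, every neighborhood of a ray of $\cl^+(\phi)$ toward $\cl^+(\phi)$. The second is the defining feature of high-filling rays from Theorem~\ref{components}: a high-filling ray meets every loop and every short ray. Now suppose $X$ is infinite; as an infinite subset of the circle it accumulates at some $z\in\partial\hat S$, so we may pick distinct $l_k\in\cl^+(\phi)$ with $e(l_k)\to z$. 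The $l_k$ are pairwise disjoint in $S$, hence their lifts $\hat l_k$ are pairwise disjoint geodesics of $\hat S$ issuing from $\hat p$, hence nested, each separating the next from $\hat p$ as the endpoints march toward $z$. Using density of the endpoints of lifts of loops and short rays in $\partial\hat S$, choose a loop or short ray $s$ whose endpoint lies strictly between $e(l_k)$ and $e(l_{k+1})$ for some large $k$; the nesting then forces $\hat s$ to be disjoint in $\hat S$ from all but finitely many $\hat l_j$, so $s$ is disjoint in $S$ from some $l_j$, contradicting that $l_j$ is high-filling. (The possibility that $z$ is itself the endpoint of a ray or loop is handled by a variant of the same argument, using that such an object is disjoint from cofinitely many of the $\hat l_k$ and hence either lands in the big component of $\RG(S;p)$ or in $\cl^+(\phi)$ itself.) Thus $X$, and so $\cl^+(\phi)$, is finite.

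The hard part is precisely the step that nesting in $\hat S$ yields disjointness downstairs in $S$. Because $\hat S$ is the conical cover of an \emph{infinite}-type surface, geodesics from $\hat p$ can be wild: two geodesics from $\hat p$ with nearby endpoints on $\partial\hat S$ need not be disjoint in $\hat S$, and even when the chosen lifts $\hat s,\hat l_j$ are disjoint this need not imply $i(s,l_j)=0$ in $S$, since other lifts of $s$ may meet $\hat l_j$. Controlling this --- that is, upgrading the purely topological ``squeezing on $\partial\hat S$'' to a statement about intersection numbers in $S$, which is exactly what stops the same argument from (falsely) proving that every clique is finite --- is where the loxodromic dynamics of $\phi$ must genuinely be used, and is the core of the proof.
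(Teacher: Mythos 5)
First, a point of comparison: the paper does not prove this statement at all --- it is imported verbatim from Bavard--Walker (\cite{simultaneous}, Theorem 7.1.1) --- so there is no in-paper argument to measure your sketch against, and I can only evaluate it on its own terms.

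The setup is fine: $\cl^-(\phi)=\cl^+(\phi^{-1})$, the map $l\mapsto e(l)$ to $\partial\hat S$ is injective, the rays of a clique are pairwise disjoint in $S$, and $\hat\phi$ preserves $X=e(\cl^+(\phi))$. But the step you yourself flag as ``the hard part'' is not a technicality to be controlled later; it is the entire content of the theorem, and as stated your squeezing argument cannot supply it. The reason is that disjointness in $\hat S$ is automatic and therefore vacuous: the preimage of $\hat p$ in $\H^2$ under the covering $\H^2\to\hat S$ is a single parabolic fixed point $q$ (the deck group is the cyclic parabolic subgroup fixing $q$), so the full preimage in $\H^2$ of \emph{any} geodesic of $\hat S$ asymptotic to $\hat p$ is a family of geodesics emanating from $q$, and any two such families are pairwise disjoint away from $q$. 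Hence \emph{every} pair of distinct geodesics of $\hat S$ from $\hat p$ to $\partial\hat S$ is disjoint in $\hat S$ --- the ``nesting'' of the $\hat l_k$ and the disjointness of $\hat s$ from cofinitely many $\hat l_j$ hold for entirely trivial reasons and say nothing about $i(s,l_j)$ in $S$, where the intersections come from the other $\pi_1(S)$-translates. So the contradiction with $l_j$ being high-filling never materializes. The decisive sanity check is the one you mention in passing: cliques of high-filling rays can be infinite in general, so no argument of this purely order-theoretic type can work; a correct proof must use the loxodromic dynamics of $\phi$ quantitatively (e.g.\ via the contraction of the basic neighborhoods $N(l,n)$ under powers of $\phi$ and the unicorn-path description of $F$), and your sketch only asserts that the dynamics ``must genuinely be used'' without using it. As written, the proposal is an honest map of where a proof would have to live, not a proof.
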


For use later on, we also define the notion of cover convergence:

\begin{defn}
Let $l_n$ be a sequence of short rays or loops and $l$ a short ray or loop. Let $\widehat{l_n}$ be the unique lift of $l_n$ to $\hat{S}$ asymptotic to $\hat{p}$ and define $\hat{l}$ similarly. We say that $l_n$ \textit{cover converges} to $l$ if the endpoints of $\widehat{l_n}$ on $\partial \hat{S}$ converge to the endpoint of $\hat{l}$ on $\partial \hat{S}$.
\end{defn}

Note that $l_n$ cover converges to $l$ if and only if $l_n$ converges to $l$ with respect to the topology induced by the basic open sets $N(l,n)$ defined above.

\subsection{Witnesses}

We now consider subsurfaces of $S$. A simple closed curve $c\subset S$ is \textit{essential} if it doesn't bound a disk or an annulus. A subsurface $V\subset S$ is \textit{essential} if every boundary component of $V$ is essential in $S$. If $V$ is finite type then we denote by $\xi(V)$ the complexity $\xi(V)=3G-3+P+B$ where $G$ is the genus of $V$, $P$ is the number of punctures of $V$, and $B$ is the number of boundary components.

\begin{defn}
An essential subsurface $V\subset S$ is a \textit{witness} for $L(S;p)$ if every simple loop based at $p$ intersects $V$.
\end{defn}

\begin{rem}
Witnesses were first defined (in a more general context) by Masur-Schleimer in \cite{disk} where they were called ``holes.'' Schleimer has suggested that the term should be changed from ``hole'' to ``witness.''
\end{rem}

In other words $V$ is a witness if and only if it contains a neighborhood of $p$. Witnesses are interesting from the perspective of the geometry of $L(S;p)$.

\begin{prop}[\cite{afp} Corollary 4.2] \label{qiwitness}
Let $V\subset S$ be a finite type witness for $L(S;p)$ of complexity $\xi(V)\geq 2$. Then the inclusion map $L(V;p)\to L(S;p)$ is a quasi-isometric embedding.
\end{prop}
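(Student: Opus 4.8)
The plan is to prove this using a coarse Lipschitz retraction from $L(S;p)$ onto $L(V;p)$, in the spirit of the subsurface projection arguments of Masur--Minsky \cite{mm} and their adaptation to loop graphs. First I would note that the inclusion $L(V;p) \to L(S;p)$ is $1$-Lipschitz (disjoint loops in $V$ remain disjoint in $S$), so it suffices to construct a coarsely well-defined map $\pi: L(S;p) \to L(V;p)$ that is coarsely Lipschitz and restricts to (a coarse identity on) $L(V;p)$. Here is where the witness hypothesis is essential: since $V$ contains a neighborhood of $p$, every simple loop $c$ based at $p$ in $S$ actually enters $V$, and in fact the initial segment of $c$ leaving $p$ lies in $V$. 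One then defines $\pi(c)$ to be (a vertex near) the subsurface projection $\pi_V(c)$ obtained from the arcs of $c \cap V$ containing $p$: these are arcs based at $p$ with endpoints on $\partial V$, and one surgers them along $\partial V$ to produce a bounded-diameter set of loops based at $p$ inside $V$, i.e. a bounded-diameter subset of $L(V;p)$. The complexity hypothesis $\xi(V) \geq 2$ guarantees $L(V;p)$ is not a join-type degenerate graph (where distances could be bounded), so the target is genuinely an infinite-diameter hyperbolic space.

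The key steps, in order, are: (i) verify $\pi$ is coarsely well-defined, i.e. the surgered loops from $c \cap V$ form a set of uniformly bounded diameter in $L(V;p)$; this is the standard fact that a properly embedded arc in $V$ determines a bounded set in $\CG(V)$, transported to the loop-graph setting using the (quasi-)isometry between $L(V;p)$ and an appropriate arc-and-curve complex of $V$ relative to $p$. (ii) Show $\pi$ is coarsely Lipschitz: if $c, c'$ are disjoint loops in $S$, then $c \cap V$ and $c' \cap V$ are disjoint arc systems in $V$, so their surgeries are a bounded distance apart in $L(V;p)$; iterating along an edge-path gives $d_{L(V;p)}(\pi(c),\pi(c')) \leq K\, d_{L(S;p)}(c,c') + K$. (iii) Observe that if $c$ already lies in $L(V;p)$ then $c \cap V = c$ and $\pi(c)$ is within bounded distance of $c$ in $L(V;p)$. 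Combining (i)--(iii): for $a,b \in L(V;p)$ we get $d_{L(V;p)}(a,b) \leq d_{L(V;p)}(\pi(a),\pi(b)) + O(1) \leq K\, d_{L(S;p)}(a,b) + O(1)$, and together with the reverse $1$-Lipschitz inequality this is exactly the quasi-isometric embedding statement.

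The main obstacle I anticipate is step (i) — pinning down precisely how a loop $c$ in $S$ projects into $L(V;p)$ rather than merely into $\CG(V)$, since the base point $p$ must be respected throughout. The subtlety is that $c \cap V$ consists of several arcs, only some of which are incident to $p$, and one has to check that the choice among the $p$-incident arcs, together with the surgery along $\partial V$ to close them into based loops, is forced up to bounded ambiguity; this uses that any two disjoint arcs based at $p$ in $V$ give loops at bounded distance in $L(V;p)$, together with the fact that distinct $p$-incident components of $c \cap V$ are themselves disjoint. A clean way to organize this is to pass through the arc graph $\mathcal{A}(V;p)$ of arcs based at $p$, note it is quasi-isometric to $L(V;p)$ when $\xi(V)\geq 2$ (this is where the complexity bound re-enters, excluding small sporadic cases such as the once-punctured torus or four-times-punctured sphere where the arc and loop graphs can behave degenerately), and perform the projection and all Lipschitz estimates there before transporting back. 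I would cite \cite{afp} or \cite{disk} for the comparison between these graphs to keep the argument short.
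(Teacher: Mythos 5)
Your proposal matches the paper's approach: the paper (quoting \cite{afp}, and reproving the estimates in the proposition immediately following this one) defines exactly this subsurface projection $\pi_V$ by surgering the two $p$-incident arcs of $c\cap V$ along the corresponding components of $\partial V$, proves $\diam_{L(V;p)}(\pi_V(c))\le 2$ and the analogous bound for disjoint loops by a direct surgery argument, and then invokes the standard coarse-retraction conclusion. The only cosmetic difference is that you route the bounded-diameter estimates through the arc graph $\mathcal{A}(V;p)$ and lean on the complexity hypothesis there, whereas the paper performs the surgeries directly in $L(V;p)$ with explicit constants, which is what later lets it drop the hypothesis $\xi(V)\ge 2$ entirely.
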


\begin{rem}
Technically this is a corollary of \cite{afp} Corollary 4.2, where we take $P=\{p\}$.
\end{rem}

Moreover, we will show below that this proposition remains true even if $\xi(V)<2$. Note that there are only a few possibilities for witnesses $V\subset S$ with $\xi(V)<2$:
\begin{itemize}
\item There are no witnesses $V$ with $\xi(V)\leq -1$.
\item If $\xi(V)=0$ then $V$ is either a torus or a thrice-punctured sphere. The first case does not occur. In the second case $L(V;p)$ consists of a single point.
\item If $\xi(V)=1$ then $V$ is either a once-punctured torus or a four-punctured sphere. The first case does not occur.
\end{itemize}

Hence the only non-trivial case of the analog of Proposition \ref{qiwitness} for $V$ a finite type witness with $\xi(V)<2$ is the case that $V$ is a four-punctured sphere.

\begin{prop}
Let $V\subset S$ be a finite type witness for $L(S;p)$. Then the inclusion map $L(V;p)\to L(S;p)$ is a quasi-isometric embedding.
\end{prop}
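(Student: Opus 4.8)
The plan is to reduce the general statement to the already-established case $\xi(V)\geq 2$ of Proposition~\ref{qiwitness}. By the case analysis preceding the statement, the only case not covered is when $V$ is a four-punctured sphere (with $p$ one of the four punctures). So it suffices to handle that single case, and the natural strategy is to enlarge $V$ inside $S$ to a witness $W$ of complexity at least $2$ and compare the three loop graphs $L(V;p)$, $L(W;p)$, and $L(S;p)$.

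First I would observe that since $S$ is infinite type, there is always an essential finite type subsurface $W$ with $V\subset W\subset S$, $W$ a witness, and $\xi(W)\geq 2$: just take $W$ to be a regular neighborhood of $V$ together with an arc or handle reaching into the complement of $V$ (which is nonempty and not a union of disks/once-punctured disks since $S$ is infinite type), chosen so that $W$ still contains a neighborhood of $p$. Then Proposition~\ref{qiwitness} applies to the inclusion $L(W;p)\to L(S;p)$, giving that it is a quasi-isometric embedding. So the problem is reduced to showing that $L(V;p)\to L(W;p)$ is a quasi-isometric embedding when $V$ is a four-punctured sphere witness inside the finite type surface $W$. This is now a purely finite type statement, and I would either invoke the finite type theory directly or give an ad hoc argument: the inclusion is clearly $1$-Lipshitz, so the content is the lower bound on distances.

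For the lower bound in the finite type case, the cleanest route is to use the fact that $L(V;p)$ for a four-punctured sphere is quasi-isometric to the Farey graph (equivalently $\mathcal{C}(V)$ after collapsing, or to $\H^2$), which has a well-understood coarse geometry, together with a subsurface projection $\pi_V: L(W;p)\to L(V;p)$ defined by taking a loop in $W$ based at $p$, intersecting it with $V$, and using the arcs of intersection that are still based at $p$ (or surgering to produce loops in $V$ based at $p$); such a projection is coarsely Lipschitz and coarsely a left inverse to the inclusion. Alternatively, and perhaps more in the spirit of the surrounding paper, I would use unicorn/surgery arguments: given a geodesic in $L(W;p)$ between two loops contained in $V$, surger it to a path in $L(V;p)$ of comparable length, using that every loop based at $p$ meets $V$ (the witness property) to control the surgeries. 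Either way the key input is that a witness admits a coarsely Lipschitz projection that is a quasi-inverse to inclusion.

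The main obstacle I anticipate is the last step: producing the coarse projection $L(W;p)\to L(V;p)$ and proving it is coarsely Lipschitz when $\xi(V)=1$. For $\xi(V)\geq 2$ this is exactly what underlies \cite{afp} Corollary~4.2, but the four-punctured sphere is a genuine edge case where curve-graph projection arguments often need special handling (e.g.\ distance in the Farey graph behaves differently, and the Bounded Geodesic Image machinery has to be set up for annular/low-complexity witnesses). I would expect the resolution to be to treat the four-punctured sphere loop graph via its identification with a Farey-type graph and check directly that intersecting a loop in $W$ with the four-punctured sphere $V$ and taking any resulting loop based at $p$ changes the isotopy class by a bounded amount under elementary moves, so that geodesics project to quasi-geodesics; alternatively one can bypass the issue entirely by noting that $L(V;p)$ coarsely coincides with a curve/arc graph of $V$ to which the standard Masur–Minsky subsurface projection estimates apply verbatim.
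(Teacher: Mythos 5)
Your reduction correctly isolates the four-punctured sphere as the only new case, and your instinct that the content of the statement is a coarsely Lipschitz projection onto $L(V;p)$ which is a coarse left inverse to the inclusion is exactly right. But the proposal stops short precisely where the work happens: you name ``producing the coarse projection and proving it is coarsely Lipschitz when $\xi(V)=1$'' as the main obstacle and then only sketch two possible resolutions (``I would expect the resolution to be\dots'') without carrying either out. That verification is the entire content of the proposition. The paper supplies it with a short, complexity-independent surgery argument: define $\pi_V(c)$ by cutting $c$ along $\partial V$ and doubling the two resulting arcs based at $p$ around the boundary components they hit; the resulting two loops in $L(V;p)$ intersect zero or two times, and if two loops $a,b\in L(V;p)$ intersect exactly twice then, after choosing the subarcs correctly, $d=a|(p,q]\cup b|[q,p)$ is a loop disjoint from both $a$ and $b$ up to isotopy, so $\diam_{L(V;p)}(\pi_V(c))\leq 2$; the same surgery bounds the diameter of $\pi_V(c_1)\cup\pi_V(c_2)$ for disjoint $c_1,c_2$. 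This is proved directly inside $L(V;p)$ and never uses $\xi(V)\geq 2$, after which the proof of \cite{afp} Corollary 4.2 goes through unmodified for \emph{all} finite type witnesses at once. No identification of $L(V;p)$ with a Farey graph and no Masur--Minsky subsurface projection machinery is needed, and invoking the latter ``verbatim'' is risky for exactly the low-complexity reasons you yourself flag.

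A secondary point: the detour through an intermediate witness $W$ with $V\subset W\subset S$ and $\xi(W)\geq 2$ buys you nothing. The composition argument is sound, but the remaining step $L(V;p)\to L(W;p)$ for a four-punctured sphere $V$ is exactly the same low-complexity projection problem you started with; making the ambient surface finite type does not make the projection any easier to control. Once you have the surgery bound above, you may as well project from $L(S;p)$ directly, which is what the paper does.
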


\begin{proof}
Define the subsurface projection $\pi_V:L(S;p)\to 2^{L(V;p)}$ as in \cite{afp}. Namely, if $c\in L(S;p)$ is contained in $V$ then we define $\pi_V(c)=\{c\}$. Otherwise there are two subarcs $c',c''$ of $c$ contained in $V$ with $c'(0)=c''(0)=p$ and $c'(1),c''(1)\in \partial V$. If $c'$ joins $p$ to the boundary component $\gamma\subset \partial V$ then we define $\pi_V(c')$ to be the boundary of a regular neighborhood of the loop $c' \cup \gamma \cup (c')^{-1}$. We define $\pi_V(c'')$ in the same way. We define \[\pi_V(c)=\{\pi_V(c'),\pi_V(c'')\}.\]

We claim that \[\diam_{L(V;p)}(\pi_V(c))\leq 2.\] As in \cite{afp} we see easily that $\pi_V(c')$ and $\pi_V(c'')$ intersect zero or two times. In the first case there is nothing to prove. So we consider two loops $a,b\in L(V;p)$ intersecting twice and show that $\diam_{L(V;p)}(a,b)\leq 2$. Orient $a$ and consider a point $q\in a\cap b$. One of the two subarcs $a|(p,q]$ or $a|[q,p)$ intersects $b$ nowhere in its interior. Suppose without loss of generality that $a|(p,q]$ intersects $b$ nowhere in its interior. Orienting $b$, we similarly have that $b|(p,q]$ intersects $a$ nowhere in its interior or $b|[q,p)$ intersects $a$ nowhere in its interior. If we suppose for example that $b|[q,p)$ intersects $a$ nowhere in its interior then we see that \[d=a|(p,q]\cup b|[q,p)\] is disjoint from both $a$ and $b$ up to isotopy. This proves the claim.

By an entirely analogous argument we have that if $c_1,c_2\in L(S;p)$ are disjoint then \[\diam_{L(V;p)}(\pi_V(c_1,c_2))\leq 2.\]

The proof of Corollary 4.2 of \cite{afp} now goes through unmodified to show that $L(V;p)\to L(S;p)$ is a quasi-isometric embedding.
\end{proof}

\section{Loop graphs of finite type witnesses}

 Denote by $\ELW(V;p)$ the space of ending laminations on witnesses $W\subset V$ for the loop graph $L(V;p)$, with the \textit{coarse Hausdorff topology}. That is, $\ELW(V;p)$ consists of minimal laminations $\lambda$ such that $\lambda$ fills an essential subsurface $W\subset V$ containing a neighborhood of $p$.
 
 If $V\subset S$ is a \textit{finite type} witness then the natural inclusion $L(V;p)\hookrightarrow L(S;p)$ is a quasi-isometric embedding. In this section we prove that $\partial L(V;p)\subset \partial L(S;p)$ may be naturally identified with $\ELW(V;p)$.

In order to do this we prove the following lemma, which describes convergence of certain sequences in the topology of $\E(S;p)$ in a way which is reminiscent of coarse Hausdorff convergence of ending laminations. The proof is basically a restatement of results from \cite{boundary} and \cite{simultaneous}.

\begin{lem} \label{topology}
Let $x$ be a \emph{finite} clique in $\E(S;p)$. Denote by $Q$ the set of endpoints on $\partial \hat{S}$ of lifts of rays $l\in x$ to rays in $\hat{S}$ beginning at $\hat{p}$. Suppose that $\{x_n\}_{n=1}^\infty$ is a sequence of cliques in $\E(S;p)$. Then the following are equivalent:
\begin{enumerate}[(1)]
\item $x_n\to x$ in $\E(S;p)$.
\item For any subsequence $\{x_{n_i}\}_{i=1}^\infty$ and choice of high-filling rays $l_{n_i}\in x_{n_i}$ such that $l_{n_i}$ cover converges to a ray or loop $l$, we have $l\in x$.
\item For any open neighborhood $\mathcal{O}\subset \partial \hat{S}$ of $Q$, there exists $N(\mathcal{O})>0$ such that if $n\geq N(\mathcal{O})$ and $l_n\in x_n$, we have $q_n\in \mathcal{O}$, where $q_n$ is the endpoint on $\partial \hat{S}$ of $\hat{l}_n$, the unique lift of $l_n$ to $\hat{S}$ beginning at $\hat{p}$.
\end{enumerate}
\end{lem}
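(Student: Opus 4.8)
The plan is to prove the cycle of implications $(3)\Rightarrow(1)\Rightarrow(2)\Rightarrow(3)$, working throughout in the conical cover $\hat S$ where the $\MCG(S;p)$-action is realized by homeomorphisms of $\partial\hat S\cong S^1$ and where, by Bavard--Walker, the endpoints of lifts of short rays and loops are dense and the endpoints of high-filling rays are preserved. The key translation device is the last remark of Section \ref{boundarybackground}: cover convergence of $l_n$ to $l$ is the same as convergence in the topology generated by the sets $N(l,n)$, i.e. the topology on $\HF(S;p)$ whose quotient is the topology on $\E(S;p)$. So statements about $\E(S;p)$-convergence can be rephrased as statements about convergence of boundary points $q_n\in\partial\hat S$ of the lifts $\hat l_n$.

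For $(3)\Rightarrow(1)$: recall that a basis of neighborhoods of the clique $x$ in $\E(S;p)$ is given (via the quotient) by sets of the form $\bigcup_{l\in x} N(l,n)$, or equivalently by the condition that the lifted endpoint $q$ lies in a prescribed open neighborhood $\mathcal O$ of the finite set $Q$ in $\partial\hat S$; here finiteness of $x$ is used so that $Q$ is a finite (hence closed) set and such $\mathcal O$ exist and shrink down to $Q$. Given such an $\mathcal O$, hypothesis $(3)$ produces $N(\mathcal O)$ so that for $n\ge N(\mathcal O)$ every ray $l_n\in x_n$ has $q_n\in\mathcal O$; this says exactly that $x_n$ eventually lies in the corresponding basic neighborhood of $x$, i.e. $x_n\to x$.

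For $(1)\Rightarrow(2)$: suppose $x_n\to x$ but, for some subsequence and choices $l_{n_i}\in x_{n_i}$, we have $l_{n_i}$ cover converging to a ray or loop $l\notin x$. Since $l\notin x$ and $x$ is a clique of high-filling rays, $l$ is not in the same clique as any element of $x$; in the $\RG(S;p)$ picture this means $l$ intersects (or is otherwise separated from) the rays of $x$, and translated to $\partial\hat S$ the endpoint of $\hat l$ is a point $q\notin Q$. Choose disjoint open neighborhoods $\mathcal O\supset Q$ and $\mathcal U\ni q$ in $\partial\hat S$. Cover convergence of $l_{n_i}$ to $l$ gives $q_{n_i}\to q$, so $q_{n_i}\in\mathcal U$ for large $i$, in particular $q_{n_i}\notin\mathcal O$; but $x_{n_i}\to x$ forces $q_{n_i}\in\mathcal O$ for large $i$, a contradiction. (One must be slightly careful that $x_{n_i}\to x$ in $\E(S;p)$ controls the endpoints of \emph{all} rays in $x_{n_i}$, not just a chosen one; this again is because a neighborhood of $x$ in the quotient topology pulls back to a saturated open set in $\HF(S;p)$.)

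For $(2)\Rightarrow(3)$: I would argue by contradiction. If $(3)$ fails there is an open $\mathcal O\supset Q$, a subsequence $n_i\to\infty$, and rays $l_{n_i}\in x_{n_i}$ with $q_{n_i}\notin\mathcal O$. Since $\partial\hat S\cong S^1$ is compact and $\partial\hat S\setminus\mathcal O$ is closed, after passing to a further subsequence $q_{n_i}\to q\in\partial\hat S\setminus\mathcal O$. The point $q$ is a limit of endpoints of high-filling rays; using the Bavard--Walker description of $\HF(S;p)$ and $\E(S;p)$ — in particular that the set of endpoints of high-filling rays is closed in $\partial\hat S$, and density/accessibility of the relevant lifts — I claim $q$ is the endpoint of the lift of some ray or loop $l$, and that $l_{n_i}$ cover converges to $l$; then $(2)$ gives $l\in x$, so $q\in Q\subset\mathcal O$, contradicting $q\notin\mathcal O$. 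The main obstacle, and the place where I expect to lean hardest on \cite{boundary} and \cite{simultaneous}, is exactly this last point: extracting from a convergent sequence of boundary points of high-filling rays an honest limiting ray or loop whose clique we can name, and verifying that ``$q_{n_i}\to q$ in $\partial\hat S$'' is genuinely equivalent to ``$l_{n_i}$ cover converges to $l$'' at the level of cliques rather than individual rays (the finiteness of $x$ is what keeps this manageable). Everything else is a routine unwinding of the definition of the quotient topology on $\E(S;p)$ and the characterization of cover convergence.
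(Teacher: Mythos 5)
Your cycle $(3)\Rightarrow(1)\Rightarrow(2)\Rightarrow(3)$ has a genuine gap in the step $(1)\Rightarrow(2)$, at the point where you assert that $x_{n_i}\to x$ forces the endpoints $q_{n_i}$ of the \emph{chosen} rays $l_{n_i}\in x_{n_i}$ into the neighborhood $\mathcal{O}$ of $Q$, justified by the remark that open neighborhoods of $x$ pull back to saturated open sets in $\HF(S;p)$. That assertion is exactly the implication $(1)\Rightarrow(3)$, and the saturation remark does not prove it: the open set $\mathcal{O}^*=\{k\in\HF(S;p): q_k\in\mathcal{O}\}$ is \emph{not} saturated (a clique may have one ray with endpoint in $\mathcal{O}$ and another without), and abstract quotient-topology reasoning produces no saturated open set containing $x$ and contained in $\mathcal{O}^*$. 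A priori there could be cliques $y_n$ with one ray cover converging to a ray of $x$ while other rays stay far from $Q$; every saturated open set containing $x$ would then swallow all of $y_n$, so $y_n\to x$ in $\E(S;p)$ while $(3)$ fails. Ruling this out is genuine geometric content: the paper's proof of $(1)\Rightarrow(3)$ invokes Proposition 6.5.2 of \cite{simultaneous}, which guarantees that once $F(x_n)$ is close to $F(x)$ in $\partial L(S;p)$, \emph{every} ray of $x_n$ $m$-begins like some ray of $x$. The same conflation appears in your opening claim for $(3)\Rightarrow(1)$ that the sets cut out by neighborhoods $\mathcal{O}\supset Q$ form ``a basis of neighborhoods'' of $x$: that statement \emph{is} the equivalence $(1)\Leftrightarrow(3)$ and cannot simply be recalled. (For the direction $(3)\Rightarrow(1)$ you only need that every open $W\ni x$ \emph{contains} such a set, which does follow from saturation of $\pi^{-1}(W)$, finiteness of $x$, and the identification of the $N(l,n)$-topology with the subspace topology from $\partial\hat{S}$; so that direction survives.)

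For comparison, the paper never proves $(1)\Rightarrow(2)$ directly: it proves $(2)\Leftrightarrow(3)$ by the compactness arguments you sketch, proves $(1)\Rightarrow(3)$ via Proposition 6.5.2, and closes the loop with $(2)\Rightarrow(1)$ using unicorn paths (Lemma 5.2.2 of \cite{boundary}: if $l_{n_i}$ cover converges to $l\in x$, then $x_{n_i}=G([P(c,l_{n_i})])\to G([P(c,l)])=x$). Your $(2)\Rightarrow(3)$, including the extraction of a limiting simple ray or loop from a convergent sequence of endpoints that you rightly flag, matches the paper's. To repair your argument you must either establish $(1)\Rightarrow(3)$ honestly via Proposition 6.5.2, or replace your $(1)\Rightarrow(2)$ with the unicorn-path proof of $(2)\Rightarrow(1)$.
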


\begin{proof}

First we show that (2) and (3) are equivalent.

To see that (2) implies (3), suppose that there exists an open neighborhood $\mathcal{O}$ of $Q$ for which there does not exist an $N(\mathcal{O})>0$ as in the statement of (3). Then we may choose a subsequence $\{x_{n_i}\}$ and $l_{n_i}\in x_{n_i}$ with the property that the unique lift $\hat{l}_{n_i}$ to $\hat{S}$ beginning at $\hat{p}$ has its other endpoint not in $\mathcal{O}$. Since $\mathcal{O}^C$ is compact, we may suppose, up to taking a further subsequence that $l_{n_i}$ cover converges to a ray or loop $l$ with $l\notin x$. Hence (2) fails.

To see that (3) implies (2) suppose that there exists a subsequence $\{x_{n_i}\}$ and $l_{n_i}\in x_{n_i}$ such that $l_{n_i}$ cover converges to a ray or loop $l$ such that $l \notin x$. Consider the unique lift $\hat{l}$ to $\hat{S}$ beginning at $\hat{p}$ and let $q$ be the endpoint of $\hat{l}$ in $\partial \hat{S}$. We may choose disjoint open neighborhoods $\mathcal{O}$ of $Q$ and $U$ of $q$. Then for all large enough $i$, we have that the endpoint of $\hat{l}_{n_i}$ in $\partial \hat{S}$ lies in $U$ and in particular not in $\mathcal{O}$, so (3) fails.

Now suppose that (1) holds. We will show that (3) also holds. Given a neighborhood $\mathcal{O}$ of $Q$, there exists $m=m(\mathcal{O})$ large enough that if $k$ $m$-begins like some $l\in x$, then the endpoint of $\hat{k}$ in $\partial \hat{S}$ lies in $\mathcal{O}$ where $\hat{k}$ is the unique lift to $\hat{S}$ beginning at $\hat{p}$ (note: this is where we use the fact that $x$ is \textit{finite}). By Proposition 6.5.2 of \cite{simultaneous}, there exists $N=N(m)>0$ such that if $n\geq N$ and $l_n \in x_n$ then $l_n$ $m$-begins like some $l \in x$ (since $F(x_n)$ lies in the neighborhood $U(F(x),R)$ specified by Proposition 6.5.2 for all $n$ large enough). This proves (3).

Finally, suppose that (2) holds. We will show that (1) also holds. If (1) did not hold, there would exist a neighborhood $U$ of $x$ in $\HF(S;p)$ and an infinite subsequence $\{x_{n_i}\}$ such that $x_{n_i}\notin U$ for all $i$. Choose $l_{n_i}\in x_{n_i}$. By compactness of $\partial \hat{S}$, we may take a further subsequence to suppose that the sequence $l_{n_i}$ cover converges to a ray or loop $l$. By assumption, $l\in x$. Fix a loop $c\in L(S;p)$. By Lemma 5.2.2 of \cite{boundary} (which holds for general surfaces $S$ with isolated punctures, see Section 6.4 of \cite{simultaneous}), we have that $x_{n_i}=G([P(c,l_{n_i})])\to G([P(c,l)])=x$. This contradicts that $x_{n_i}\notin U$ for all $i$.

\end{proof}

To begin the identification of $\partial L(V;p)$ with $\ELW(V;p)$, we first identify the image of $\partial L(V;p)$ inside $\E(S;p)$.

\begin{lem} \label{subimage}
The image $G(\partial L(V;p))$ consists of the cliques of high-filling rays in $S$ all of which are contained in $V$. Equivalently, the image consists of the cliques of high-filling rays which \emph{contain} a ray contained in $V$.
\end{lem}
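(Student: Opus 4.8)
The plan is to prove both inclusions of the claimed equality. Recall that $V \subset S$ is a finite type witness, so by the previous proposition the inclusion $L(V;p) \hookrightarrow L(S;p)$ is a quasi-isometric embedding, hence induces a topological embedding $\partial L(V;p) \hookrightarrow \partial L(S;p)$, and composing with $G$ we get an embedding $\partial L(V;p) \hookrightarrow \E(S;p)$. The key tool is Lemma \ref{topology}, together with the description of the homeomorphism $F$ (equivalently $G$) via paths of unicorns: for a loop $c \in L(V;p)$ and a high-filling ray $l$, we have $G([P(c,l)])$ equal to the clique containing $l$. I would first observe that the same unicorn-path construction, carried out entirely \emph{inside} $V$, realizes the homeomorphism $\partial L(V;p) \to \E(V;p)$, so that a boundary point of $L(V;p)$ is represented by $[P(c,l)]$ for $c \in L(V;p)$ and $l$ a high-filling ray \emph{of} $V$; and crucially, since $c$ and $l$ both lie in $V$, every unicorn along $P(c,l)$ lies in $V$, so $P(c,l)$ is also a path in $L(S;p)$ converging to a point of $\partial L(S;p)$, and by the quasi-isometric embedding this is exactly the image point. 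Its image under $G$ is then the clique of high-filling rays of $S$ containing $l$.

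\textbf{The two inclusions.} For the inclusion ``$\subseteq$'': given $\eta \in \partial L(V;p)$, pick $c \in L(V;p)$ and a high-filling ray $l$ of $V$ with $\eta = [P(c,l)]$. I must check that $l$, regarded as a ray on $S$, is still high-filling on $S$ (i.e. lies in a clique), and that every ray in its $S$-clique is contained in $V$. For the first point: a long ray of $V$ that is high-filling in $V$ is in particular a long ray of $S$ (it does not limit to an end of $V$, hence not to an end of $S$, and $V$ being a witness it limits to $p$), and any long ray of $S$ lies in \emph{some} component of $\RG(S;p)$; since the loop component is the only non-clique component and contains no long rays, $l$ lies in a clique of $\RG(S;p)$. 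For the second point: suppose $k$ lies in the same clique of $\RG(S;p)$ as $l$, so $k$ is disjoint from $l$ up to isotopy. I want $k \subset V$. Here I would use that $l$ is \emph{high-filling in $V$}: it fills the witness it lies in, which is all of $V$ (or at least its intersection pattern forces anything disjoint from it to stay in $V$) — more precisely, since $l$ is high-filling in $V$ it is contained in no proper essential subsurface of $V$ other than those still containing a neighborhood of $p$, and a ray disjoint from such an $l$ cannot escape $V$ without crossing $\partial V$, which would force a crossing with $l$. Conversely, for ``$\supseteq$'': given a clique $x$ of high-filling rays of $S$ all contained in $V$, pick $l \in x$; then $l$ is a long ray of $V$, and being disjoint from nothing that escapes $V$ it is high-filling within $V$, so it represents a point $\eta \in \partial L(V;p)$ whose image under $G$ is, by the paragraph above, the $S$-clique of $l$, namely $x$. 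The ``equivalently'' reformulation is then immediate: a clique either consists entirely of rays in $V$ or, if it contains one ray in $V$, all its members (being disjoint from that ray) are forced into $V$ by the high-filling argument just given.

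\textbf{Main obstacle.} I expect the crux to be the implication: \emph{if $l$ is a high-filling ray of $S$ contained in $V$, then every ray disjoint from $l$ is also contained in $V$} (and the companion statement that $l$ remains high-filling as a ray of $S$). This is where one must unpack what ``high-filling'' means in terms of the clique structure of $\RG$ and relate the filling property on $V$ to the complement structure in $S$. The argument should run: realize $\partial V$ geodesically and $l$ geodesically inside the geodesic representative of $V$; a ray $k$ disjoint from $l$, if not contained in $V$, crosses some component $\gamma$ of $\partial V$; but then a sufficiently long terminal segment of $k$ lies outside $V$, and I must derive a contradiction with disjointness from $l$ — this uses that $l$ accumulates on all of $\partial V$ from inside (a consequence of $l$ being high-filling in $V$, so that the closure of $l$ together with its clique-mates ``uses up'' $V$ up to the boundary). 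I would cite the relevant structural results of Bavard-Walker (\cite{boundary}, \cite{simultaneous}) on cliques and high-filling rays to make this precise, and note that the case $\xi(V) \le 1$ is degenerate but harmless since then there are no long rays in $V$ at all and $\partial L(V;p)$ is empty or a point, matching the right-hand side trivially.
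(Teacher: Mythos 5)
Your proposal has a genuine gap in the forward inclusion, at the step where you argue that a ray $l$ representing a point of $\partial L(V;p)$ is high-filling \emph{in $S$}. You claim that any long ray of $S$ must lie in a clique because ``the loop component is the only non-clique component and contains no long rays.'' That premise is false: Theorem \ref{components} says only that the distinguished component contains \emph{all} loops and short rays and that the \emph{other} components are cliques of long rays; the distinguished component does in general also contain long rays (any long ray disjoint from some loop lies in it), and such rays are by definition not high-filling. So being a long ray does not place $l$ in a clique, and you have not shown that $l$ meets every loop and short ray of $S$ --- which is the actual content needed. Relatedly, your plan leans on running the Bavard--Walker clique/unicorn boundary theory intrinsically on the finite type surface $V$ (producing ``high-filling rays of $V$'' and a homeomorphism $\partial L(V;p)\to\E(V;p)$); that machinery is developed for infinite type surfaces, and identifying $\partial L(V;p)$ concretely is precisely what this section of the paper is building toward, so this is at best unsupported and at worst circular. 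The paper avoids both problems by never comparing ``high-filling in $V$'' with ``high-filling in $S$'': it takes a quasigeodesic $\{c_n\}\subset L(V;p)\subset L(S;p)$, extracts a cover-convergent limit $l$, and invokes the convergence results for $L(S;p)$ itself (Theorem 5.1.1 and Lemma 4.4.1 of \cite{boundary}) to conclude that $l$ is high-filling in $S$ and that $G([\{c_n\}])$ is its clique; containment $l\subset V$ is then immediate since no $c_n$ crosses the geodesic representative of $\partial V$.

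Your ``main obstacle'' paragraph also misidentifies the mechanism forcing clique-mates of $l$ into $V$. It is not that $l$ accumulates on all of $\partial V$ from inside (it need not: $\overline{l}\setminus l$ is a minimal lamination $\lambda$ filling some witness $W\subseteq V$, possibly far from $\partial V$). The correct argument, as in the paper, is that high-fillingness of $l$ forces the complementary region of $\lambda$ containing $p$ to be a once-punctured ideal polygon, so every ray disjoint from $l$ must spiral onto $\lambda\subset V$ and hence lies in $V$. Finally, two smaller points: your reverse inclusion should instead use Theorem 6.3.1 of \cite{simultaneous} directly (given $l\in x$ contained in $V$ and $a\in L(V;p)$, the path $P(a,l)$ lies in $L(V;p)$ and its limit is $x$), and your remark that $\partial L(V;p)$ is empty or a point when $\xi(V)=1$ is incorrect --- the loop graph of a four-punctured sphere has infinite diameter and nontrivial boundary.
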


\begin{proof}
First we show that every point of $G(\partial L(V;p))$ is a clique of high-filling rays, \emph{all of which} lie in $V$. Let $\{c_n\}_{n=1}^\infty \subset L(V;p)\subset L(S;p)$ be  a quasigeodesic converging to the point $[\{c_n\}]\in \partial L(V;p)\subset \partial L(S;p)$. Pass to a subsequence to assume that $\{c_n\}$ cover converges to a ray or loop $l$. The proof of Theorem 5.1.1 of \cite{boundary} (see also Theorem 6.5.3 of \cite{simultaneous}) shows that if a sequence of loops converges to a point on $\partial L(S;p)$ then any cover convergent subsequence converges to a high-filling ray. Hence $l$ is high-filling. Fix a loop $a\in L(V;p)$. Lemma 4.4.1 of \cite{boundary} (see also Section 6.2 of \cite{simultaneous} where the result is stated in full generality) may be stated as follows. If a quasigeodesic sequence of loops in $L(S;p)$ converges to a point of $\partial L(S;p)$ and if a subsequence cover converges to a long ray, then the quasigeodesic sequence stays at bounded Hausdorff distance from any unicorn path defined using that ray. Hence, $\{c_n\}$ stays at a bounded distance from $P(a,l)$ (although it may not be at bounded Hausdorff distance anymore, since we chose a cover convergent subsequence at the beginning of the proof). Since $\{c_n\}$ and $P(a,l)$ define the same point in $\partial L(V;p)$, we have that $G([\{c_n\}])=G([P(a,l)])$ is the clique of high-filling rays containing $l$ (see the description of $G$ and its inverse in Section \ref{boundarybackground}).

Realize the components of $\partial V$ by geodesics. Since no $c_n$ crosses one of these geodesics, neither does $l$. Thus $l$ is contained in $V$. The closure of $l$, minus $l$ itself, $\lambda=\overline{l}\setminus l$ is a lamination contained in $V$. We claim that $\lambda$ is minimal. To see this note first that $\overline{l}$ is itself a geodesic lamination. We claim further that $l$ is an isolated leaf of $\overline{l}$. This will prove that $\lambda$ is minimal, since in a geodesic lamination on a finite type surface, every isolated leaf either spirals onto a minimal sublamination or is asymptic to a puncture on each end (see for instance \cite{notes} Theorem 4.2.8). 

To see that $l$ is isolated in $\overline{l}$ first choose a neighborhood $U$ of $p$ such that every simple geodesic that meets $U$ must be asymptotic to $p$ (see \cite{notes} Corollary 2.2.4 or \cite{mcshane} Lemma 2 for the existence of such a neighborhood $U$). Since $l$ is not a loop it is clear that it meets $\partial U$ exactly once. Now if $l$ is not isolated in $\overline{l}$ then there exists a number $D>0$ and points $x_1,x_2,\ldots$ on $l$ converging to a point $x\in l$ such that $\operatorname{length}(l|[x_i,x_{i+1}])\geq D$ for all $i$. However, this easily yields a second point of intersection of $l$ with $U$. See Figure \ref{isolated}.

\begin{figure}[h]
\centering

\begin{tabular}{c c}
\def\svgscale{0.2}
\begingroup%
  \makeatletter%
  \providecommand\color[2][]{%
    \errmessage{(Inkscape) Color is used for the text in Inkscape, but the package 'color.sty' is not loaded}%
    \renewcommand\color[2][]{}%
  }%
  \providecommand\transparent[1]{%
    \errmessage{(Inkscape) Transparency is used (non-zero) for the text in Inkscape, but the package 'transparent.sty' is not loaded}%
    \renewcommand\transparent[1]{}%
  }%
  \providecommand\rotatebox[2]{#2}%
  \newcommand*\fsize{\dimexpr\f@size pt\relax}%
  \newcommand*\lineheight[1]{\fontsize{\fsize}{#1\fsize}\selectfont}%
  \ifx\svgwidth\undefined%
    \setlength{\unitlength}{494.98313123bp}%
    \ifx\svgscale\undefined%
      \relax%
    \else%
      \setlength{\unitlength}{\unitlength * \real{\svgscale}}%
    \fi%
  \else%
    \setlength{\unitlength}{\svgwidth}%
  \fi%
  \global\let\svgwidth\undefined%
  \global\let\svgscale\undefined%
  \makeatother%
  \begin{picture}(1,2.3030591)%
    \lineheight{1}%
    \setlength\tabcolsep{0pt}%
    \put(0,0){\includegraphics[width=\unitlength,page=1]{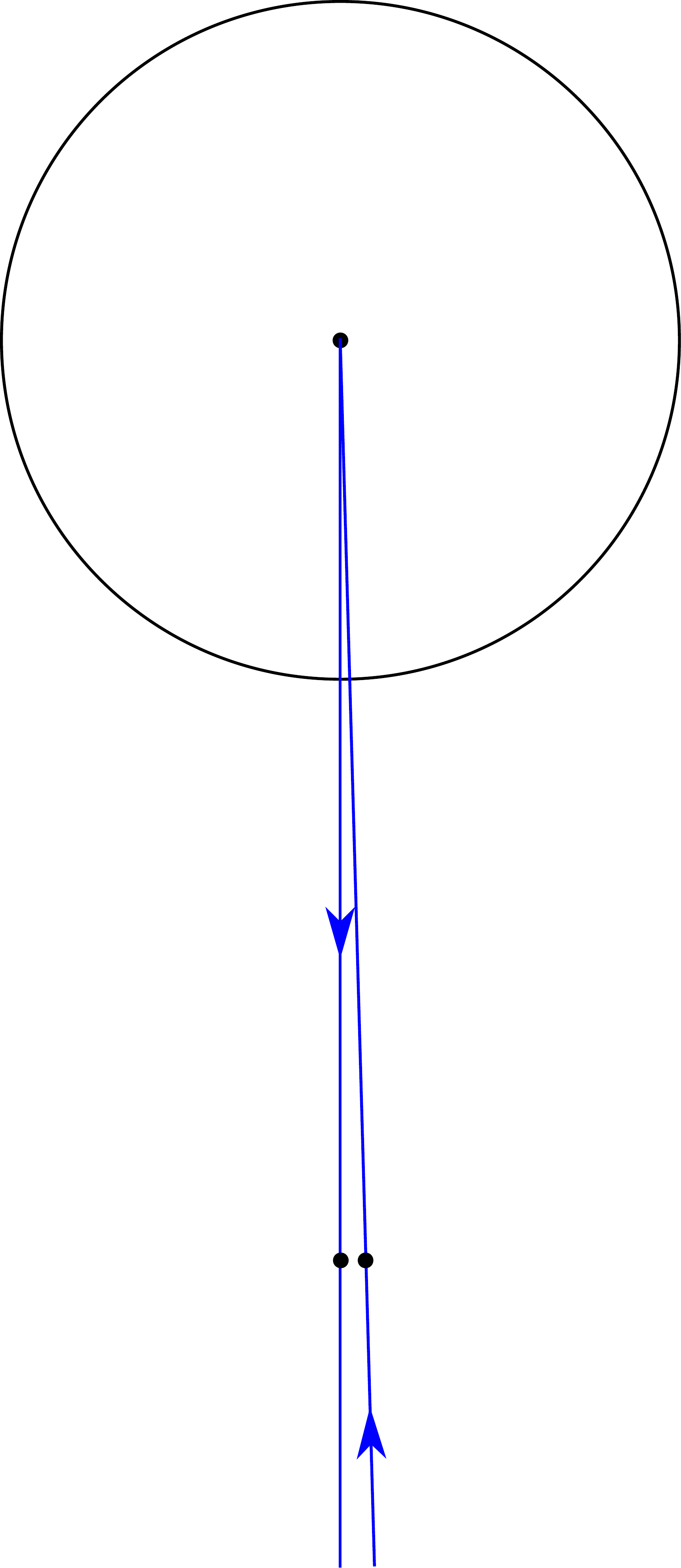}}%
    \put(0.36817729,0.43980916){\color[rgb]{0,0,0}\makebox(0,0)[lt]{\lineheight{1.25}\smash{\begin{tabular}[t]{l}$x$\end{tabular}}}}%
    \put(0.58333613,0.43980916){\color[rgb]{0,0,0}\makebox(0,0)[lt]{\lineheight{1.25}\smash{\begin{tabular}[t]{l}$x_i$\end{tabular}}}}%
    \put(0.40351894,0.70284531){\color[rgb]{0,0,1}\makebox(0,0)[lt]{\lineheight{1.25}\smash{\begin{tabular}[t]{l}$l$\end{tabular}}}}%
    \put(0.53030399,1.77849111){\color[rgb]{0,0,0}\makebox(0,0)[lt]{\lineheight{1.25}\smash{\begin{tabular}[t]{l}$p$\end{tabular}}}}%
    \put(0.09495269,1.83854039){\color[rgb]{0,0,0}\makebox(0,0)[lt]{\lineheight{1.25}\smash{\begin{tabular}[t]{l}$U$\end{tabular}}}}%
  \end{picture}%
\endgroup%
 &

\def\svgscale{0.2}
\begingroup%
  \makeatletter%
  \providecommand\color[2][]{%
    \errmessage{(Inkscape) Color is used for the text in Inkscape, but the package 'color.sty' is not loaded}%
    \renewcommand\color[2][]{}%
  }%
  \providecommand\transparent[1]{%
    \errmessage{(Inkscape) Transparency is used (non-zero) for the text in Inkscape, but the package 'transparent.sty' is not loaded}%
    \renewcommand\transparent[1]{}%
  }%
  \providecommand\rotatebox[2]{#2}%
  \newcommand*\fsize{\dimexpr\f@size pt\relax}%
  \newcommand*\lineheight[1]{\fontsize{\fsize}{#1\fsize}\selectfont}%
  \ifx\svgwidth\undefined%
    \setlength{\unitlength}{633.00346314bp}%
    \ifx\svgscale\undefined%
      \relax%
    \else%
      \setlength{\unitlength}{\unitlength * \real{\svgscale}}%
    \fi%
  \else%
    \setlength{\unitlength}{\svgwidth}%
  \fi%
  \global\let\svgwidth\undefined%
  \global\let\svgscale\undefined%
  \makeatother%
  \begin{picture}(1,1.80089916)%
    \lineheight{1}%
    \setlength\tabcolsep{0pt}%
    \put(0,0){\includegraphics[width=\unitlength,page=1]{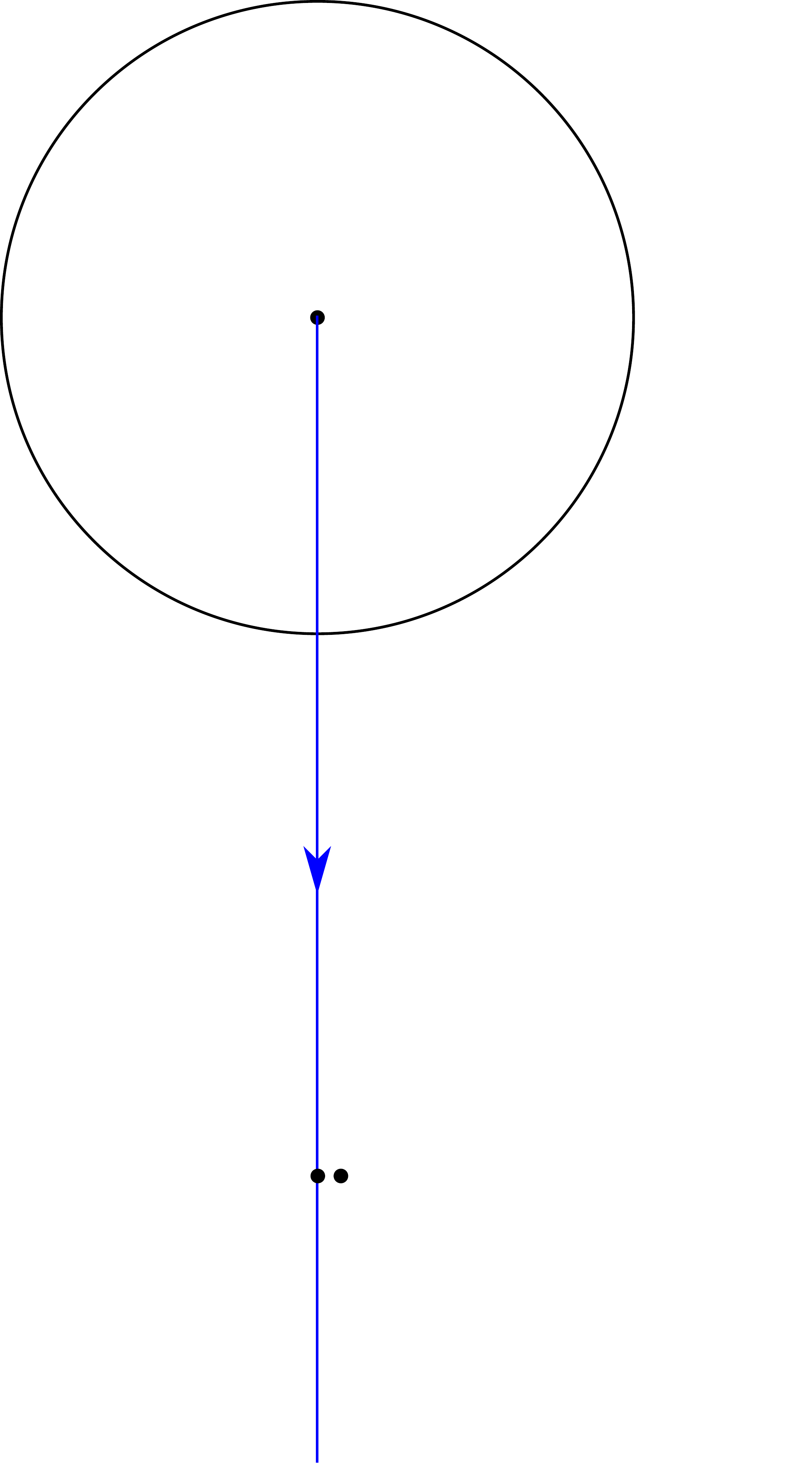}}%
    \put(0.28789978,0.34391299){\color[rgb]{0,0,0}\makebox(0,0)[lt]{\lineheight{1.25}\smash{\begin{tabular}[t]{l}$x$\end{tabular}}}}%
    \put(0.45614529,0.34391299){\color[rgb]{0,0,0}\makebox(0,0)[lt]{\lineheight{1.25}\smash{\begin{tabular}[t]{l}$x_i$\end{tabular}}}}%
    \put(0.31553552,0.54959663){\color[rgb]{0,0,1}\makebox(0,0)[lt]{\lineheight{1.25}\smash{\begin{tabular}[t]{l}$l$\end{tabular}}}}%
    \put(0.4146763,1.39070818){\color[rgb]{0,0,0}\makebox(0,0)[lt]{\lineheight{1.25}\smash{\begin{tabular}[t]{l}$p$\end{tabular}}}}%
    \put(0.07424918,1.4376643){\color[rgb]{0,0,0}\makebox(0,0)[lt]{\lineheight{1.25}\smash{\begin{tabular}[t]{l}$U$\end{tabular}}}}%
    \put(0,0){\includegraphics[width=\unitlength,page=2]{isolatedb.pdf}}%
  \end{picture}%
\endgroup%

\end{tabular}
\caption{A short or long ray cannot accumulate onto itself.}
\label{isolated}
\end{figure}

Now we claim that the complementary region of $\lambda$ containing $p$ must be a once-punctured ideal polygon. For otherwise this complementary region would contain a loop $d$. Since $d$ is disjoint from $\overline{l}\setminus l$, $l$ can only intersect $d$ finitely many times. If $d\cap l=\emptyset$ then we have a contradiction to the fact that $l$ is high-filling. Otherwise, there is a point $x\in d\cap l$ and an arc $d|[x,p)$ from $x$ to $p$ such that $d|[x,p)$ intersects $l$ only at the endpoint $x$. Then $e=l|(p,x] \cup d|[x,p)$ is a loop which is disjoint from $l$ up to isotopy, again contradicting that $l$ is high-filling. 

Since the complementary region of $\lambda$ containing $p$ is a once-punctured ideal polygon, there are two possible behaviors for rays and loops $m$: either (1) $m$ intersects $\lambda$ transversely and hence also $l$ or (2) $m$ is asymptotic to an end of the ideal polygon and hence spirals onto $\lambda$.  Every ray in the clique $G([\{c_n\}])$ is of the form (2) and so spirals onto $\lambda$ and is contained in $V$.

On the other hand, we now show that every clique of $\E(S)$ which \emph{contains} a high-filling ray which is contained in $V$ is in the image $G(\partial L(V;p))$. Consider such a clique $x$ and let $l\in x$ be a ray contained in $V$. Choose also a loop $a\in L(V;p)\subset L(S;p)$. By Theorem 6.3.1 of \cite{simultaneous}, $[P(a,l)]\in \partial L(S;p)$ and $G([P(a,l)])=x$. Moreover, $P(a,l)\subset L(V;p)$ and since $L(V;p)$ is quasi-isometrically embedded, the point $[P(a,l)]$ lies in $\partial L(V;p)$. Hence $x$ is in the image $G(\partial L(V;p))$.
\end{proof}

Now we want to identify $G(\partial L(V;p))$ with $\ELW(V;p)$. We define a map $\Phi:\ELW(V;p)\to G(\partial L(V;p))$ as follows. For $\lambda \in \ELW(V;p)$, the complementary region of $\lambda$ containing $p$ is a finite-sided ideal polygon punctured at $p$. Moreover, every ray to a vertex of this polygon spirals onto $\lambda$ and therefore intersects every loop and every ray which does not spiral onto $\lambda$. Hence this set of rays is a full clique in $\RG(S;p)$ and we define $\Phi(\lambda)$ to be this clique.

The map $\Phi$ has an inverse $\Psi$ defined as follows. Consider a clique $x\in G(\partial L(V;p))$. By Lemma \ref{subimage}, each ray in $x$ is contained in $V$. Choose a ray $l\in x$. Then $\overline{l} \setminus l$ is a minimal lamination $\lambda$ contained in $V$. Moreover, because $l$ is high-filling, the complementary region of $\lambda$ containing $p$ is a once-punctured ideal polygon. So the essential subsurface $W\subset V$ filled by $\lambda$ is a witness. We define $\Psi(x)=\lambda$. To see that $\Psi$ is well-defined, by the above remarks, if $l'$ is another ray in the clique $x$, we have that $l'$ also spirals onto $\lambda$. So $\lambda$ is independent of the choice of ray in $x$.

Clearly, both $\Phi$ and $\Psi$ are equivariant.

\begin{prop} \label{homeo}
The map $\Phi$ is a homeomorphism.
\end{prop}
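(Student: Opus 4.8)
The plan is to show that $\Phi$ and $\Psi$ are mutually inverse continuous maps. Since both maps have already been shown to be equivariant and $\Psi$ has been described as an explicit inverse, the main content is continuity. Because $\ELW(V;p)$ carries the coarse Hausdorff topology and $G(\partial L(V;p)) \subset \E(S;p)$ carries the subspace topology (which by Lemma \ref{topology} is characterized by cover convergence of high-filling representatives), I would prove both continuity statements by a sequential argument, checking that convergent sequences map to convergent sequences. This reduces everything to translating between coarse Hausdorff convergence of laminations in $V$ and cover convergence of the spiralling rays.

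First I would verify that $\Psi \circ \Phi = \mathrm{id}$ and $\Phi \circ \Psi = \mathrm{id}$; this is essentially immediate from the descriptions, since given $\lambda \in \ELW(V;p)$ the rays spiralling into the cusp of the complementary once-punctured polygon have closure $\lambda \cup (\text{the ray})$, and conversely for $x$ the lamination $\overline{l}\setminus l$ has $x$ as its clique of cuspidal rays; one only needs the fact (already established in Lemma \ref{subimage}) that for a high-filling $l \subset V$ the region of $\overline{l}\setminus l$ containing $p$ is a once-punctured ideal polygon, so the correspondence $\lambda \leftrightarrow \{\text{cuspidal rays}\}$ is a genuine bijection. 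Next, for continuity of $\Phi$, I would take $\lambda_n \toCH \lambda$ in $\ELW(V;p)$ and, using criterion (2) of Lemma \ref{topology}, take any subsequence and any choice of high-filling rays $l_{n_i} \in \Phi(\lambda_{n_i})$ which cover converges to some ray or loop $l$; I must show $l \in \Phi(\lambda)$. Passing to a further subsequence so that $l_{n_i}$ Hausdorff converges, the Hausdorff limit contains $l$, and since $l_{n_i}$ spirals onto $\lambda_{n_i}$, a Hausdorff-limit argument together with the definition of coarse Hausdorff convergence ($\lambda \subset$ any Hausdorff limit of a subsequence of $\lambda_{n_i}$) forces the Hausdorff limit of the $l_{n_i}$ to contain $\lambda$; then one argues $l$ itself must be a cuspidal ray of the complementary polygon of $\lambda$ — equivalently $\overline l \setminus l \supseteq \lambda$ and $l$ is high-filling — which gives $l \in \Phi(\lambda)$. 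For continuity of $\Psi$ I would argue in the reverse direction: given $x_n \to x$ in $G(\partial L(V;p))$, pick $l_n \in x_n$; by Lemma \ref{topology}(2) every cover-convergent subsequence of $\{l_n\}$ limits into $x$, hence (after passing to a Hausdorff-convergent subsequence) the Hausdorff limit contains a ray $l \in x$, so $\Psi(x) = \overline l \setminus l$ is contained in the Hausdorff limit; checking that this holds for \emph{every} Hausdorff-convergent subsequence and that the limit laminations are forced to contain $\Psi(x)$ yields $\Psi(x_n) \toCH \Psi(x)$.

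The main obstacle I anticipate is the bookkeeping in the Hausdorff-limit arguments: one must be careful that a cover-convergent sequence of rays $l_n$ spiralling onto $\lambda_n$ actually has Hausdorff limit equal to (not merely containing) $\lambda \cup \ell$ for the limiting ray $\ell$, ruling out extra diagonal leaves in the limit, and that the limiting ray is genuinely high-filling rather than, say, a short ray or a ray limiting onto a proper sublamination. This is where the structure from Lemma \ref{subimage} — minimality of $\overline l \setminus l$, the once-punctured polygon complementary region, and the dichotomy that every ray either crosses $\lambda$ transversely or spirals onto it — does the real work, combined with the finiteness of the clique $x$ (needed, as in the proof of Lemma \ref{topology}, to pass from "$m$-begins like some ray of $x$" to control of the cover-endpoint). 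I would also need to confirm that coarse Hausdorff convergence of the $\lambda_n$ is equivalent to the convergence statement phrased in terms of \emph{all} Hausdorff-convergent subsequences having limit containing $\lambda$, which is just the definition, but must be invoked cleanly on both sides of the argument.
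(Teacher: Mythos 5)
Your overall strategy coincides with the paper's: both continuity statements are tested on sequences and translated, via Lemma \ref{topology}, into statements about cover convergence of spiralling rays versus (coarse) Hausdorff convergence of the laminations they spiral onto. (One preliminary point you pass over: to reduce continuity to sequential continuity you need both spaces to be metrizable; the paper records this explicitly, invoking Klarreich's argument for metrizability of ending lamination spaces.) Your argument for continuity of $\Phi$ is essentially sound and is a mild variant of the paper's: you place $l$ and $\lambda$ inside a common Hausdorff-limit lamination $\mu=\lim\overline{l_{n_i}}$ to conclude that $l$ cannot cross $\lambda$ and hence is asymptotic to a vertex of the once-punctured complementary polygon, whereas the paper assumes a transverse crossing and derives a contradiction by approximating a crossing leaf of $\lambda$ by leaves of $\lambda_{n_i}$, producing a self-intersection of the simple ray $l_{n_i}$. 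Either route works.

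The genuine gap is in continuity of $\Psi$. What must be shown is that every Hausdorff-convergent subsequence of $\lambda_n:=\Psi(x_n)$ has limit $\lambda'$ containing $\lambda:=\Psi(x)$. Your argument yields $\lambda\subset\overline{l}\subset\mu$, where $\mu$ is the Hausdorff limit of the \emph{closures} $\overline{l_{n_i}}=l_{n_i}\cup\lambda_{n_i}$; but $\mu$ is in general strictly larger than $\lambda'=\lim\lambda_{n_i}$ (it contains at least the limiting leaf $l$, and possibly more coming from the initial segments of the rays $l_{n_i}$, whose lengths are not uniformly controlled). So $\lambda\subset\mu$ does not give $\lambda\subset\lambda'$: a priori $\lambda$ and $\lambda'$ could be disjoint sublaminations of $\mu$. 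The step you describe as ``checking \dots that the limit laminations are forced to contain $\Psi(x)$'' is exactly the missing content, and none of the obstacles you list addresses it. The paper closes this with two further ingredients: first, $\lambda'$ must intersect the witness $W$ filled by $\lambda$ essentially (otherwise $\lambda_{n_i}$ would eventually be isotopic off $W$, contradicting that each $\lambda_{n_i}$ meets every loop and short ray), so if $\lambda\not\subset\lambda'$ then $\lambda'$ crosses $\lambda$ transversely; second, a crossing of a leaf $m$ of $\lambda'$ with the limit ray $l$ (which spirals onto $\lambda$ by Lemma \ref{topology}) is pushed back to a crossing of $l_{n_i}$ with a leaf $m_i$ of $\lambda_{n_i}\subset\overline{l_{n_i}}$, forcing the simple ray $l_{n_i}$ to intersect itself. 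You would need to supply at least the first of these ingredients (combined with your common-lamination observation it already suffices, since $\lambda,\lambda'\subset\mu$ precludes a transverse crossing), but it appears nowhere in your sketch.
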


\begin{proof}
First, note that both $G(\partial L(V;p))$ and $\ELW(V;p)$ are metrizable. For $G(\partial L(V;p))$ this simply follows from the fact that Gromov boundaries are metrizable and $G$ is a homeomorphism. For $\ELW(V;p)$ this follows from an argument completely analogous to the argument of Klarreich in the Appendix of \cite{kla} that the ending lamination space of a finite type surface is metrizable. We omit this argument. Hence we may use sequences to test continuity of $\Phi$ and $\Psi$.

First we show that $\Phi$ is continuous. Let $\{\lambda_n\}_{n=1}^\infty\subset \ELW(V;p)$ such that $\lambda_n$ coarse Hausdorff converges to $\lambda \in \ELW(V;p)$. We wish to show that $\Phi(\lambda_n)\to \Phi(\lambda)$. If this is not the case then by Lemma \ref{topology}, there exists a subsequence $\{\lambda_{n_i}\}$ and rays $l_{n_i}$ spiraling onto $\lambda_{n_i}$ such that $l_{n_i}$ cover converges to a ray or loop $l$ which does not spiral onto $\lambda$. In particular, $l$ crosses $\lambda$. By compactness of $\GL(V)$, we may pass to a further subsequence to assume that $\lambda_{n_i}$ Hausdorff converges to a lamination $\lambda'$. By definition of coarse Hausdorff convergence, $\lambda'$ contains $\lambda$. Let $m$ be a leaf of $\lambda$ crossing $l$ at the point $q$. For any $\epsilon>0$ there exists a number $I(\epsilon)>0$ such that for all $i\geq I(\epsilon)$ there exists a leaf $m_i$ of $\lambda_{n_i}$ such that there is a subsegment of $m_i$ which is $\epsilon$-Hausdorff close to a subsegment of $m$ of radius $1/\epsilon$ centered at $q$. In particular if $\epsilon$ is small enough, if $l'$ is a ray which is $\epsilon$-close to $l$ along the segment from $p$ to $q$, then $l'$ crosses $m_i$. But by definition of cover-convergence, if $i\gg I(\epsilon)$, then $l_{n_i}$ is such a ray, so that $l_{n_i}$ crosses the leaf $m_i$ of $\lambda_{n_i}$. Since $l_{n_i}$ spirals onto $\lambda_{n_i}$, this gives a point of intersection of $l_{n_i}$ with itself, contradicting that $l_{n_i}$ is a simple ray. Thus we must have that $\Phi(\lambda_n)\to \Phi(\lambda)$. See Figure \ref{crossing}.

\begin{figure}[h]

\def\svgwidth{0.9\textwidth}
\begingroup%
  \makeatletter%
  \providecommand\color[2][]{%
    \errmessage{(Inkscape) Color is used for the text in Inkscape, but the package 'color.sty' is not loaded}%
    \renewcommand\color[2][]{}%
  }%
  \providecommand\transparent[1]{%
    \errmessage{(Inkscape) Transparency is used (non-zero) for the text in Inkscape, but the package 'transparent.sty' is not loaded}%
    \renewcommand\transparent[1]{}%
  }%
  \providecommand\rotatebox[2]{#2}%
  \newcommand*\fsize{\dimexpr\f@size pt\relax}%
  \newcommand*\lineheight[1]{\fontsize{\fsize}{#1\fsize}\selectfont}%
  \ifx\svgwidth\undefined%
    \setlength{\unitlength}{2678.93894946bp}%
    \ifx\svgscale\undefined%
      \relax%
    \else%
      \setlength{\unitlength}{\unitlength * \real{\svgscale}}%
    \fi%
  \else%
    \setlength{\unitlength}{\svgwidth}%
  \fi%
  \global\let\svgwidth\undefined%
  \global\let\svgscale\undefined%
  \makeatother%
  \begin{picture}(1,0.60997432)%
    \lineheight{1}%
    \setlength\tabcolsep{0pt}%
    \put(0,0){\includegraphics[width=\unitlength,page=1]{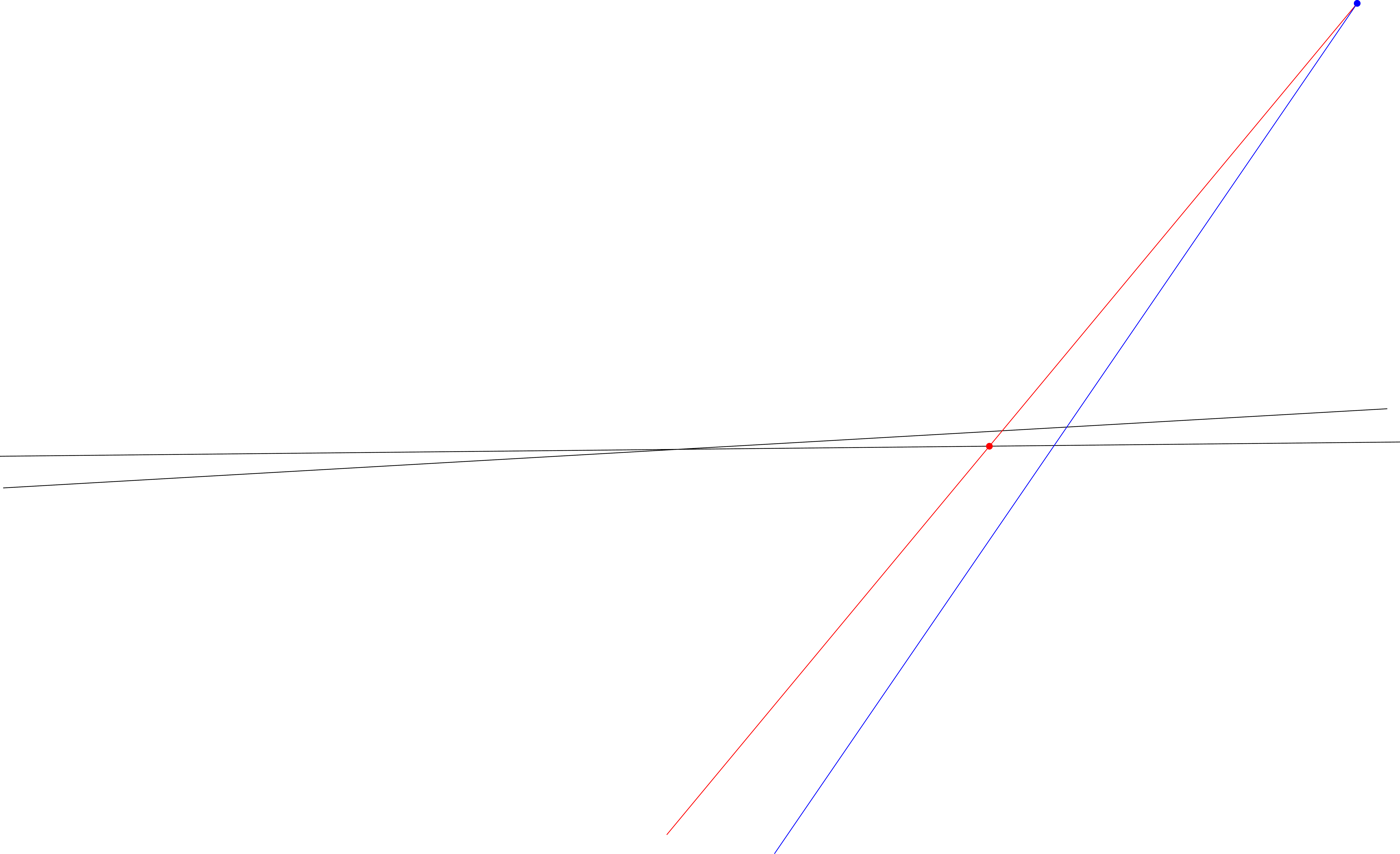}}%
    \put(0.02722483,0.2988635){\color[rgb]{0,0,0}\makebox(0,0)[lt]{\lineheight{1.25}\smash{\begin{tabular}[t]{l}$m\subset \lambda$\end{tabular}}}}%
    \put(0.0275719,0.24168261){\color[rgb]{0,0,0}\makebox(0,0)[lt]{\lineheight{1.25}\smash{\begin{tabular}[t]{l}$m_i\subset \lambda_{n_i}$\end{tabular}}}}%
    \put(0.97336617,0.59082915){\color[rgb]{0.97647059,0,0}\makebox(0,0)[lt]{\lineheight{1.25}\smash{\begin{tabular}[t]{l}$p$\end{tabular}}}}%
    \put(0,0){\includegraphics[width=\unitlength,page=2]{crossing.pdf}}%
    \put(0.91334567,0.49824652){\color[rgb]{0,0,1}\makebox(0,0)[lt]{\lineheight{1.25}\smash{\begin{tabular}[t]{l}$l_{n_i}$\end{tabular}}}}%
    \put(0.8555739,0.49811675){\color[rgb]{0.95294118,0,0}\makebox(0,0)[lt]{\lineheight{1.25}\smash{\begin{tabular}[t]{l}$l$\end{tabular}}}}%
    \put(0.71752644,0.2922619){\color[rgb]{1,0,0}\makebox(0,0)[lt]{\lineheight{1.25}\smash{\begin{tabular}[t]{l}$q$\end{tabular}}}}%
  \end{picture}%
\endgroup%

\caption{Crossings of rays in the proof of Proposition \ref{homeo}. Both blue line segments are contained in $l_{n_i}$.}
\label{crossing}
\end{figure}

Now we show that $\Psi$ is continuous. Let $\{x_n\}_{n=1}^\infty$ be a sequence of cliques of high filling rays in $V$ converging to the clique $x \in G(\partial L(V;p))$. Consider the sequence of ending laminations on witnesses $\Psi(x_n)=\lambda_n$ and the ending lamination $\Psi(x)=\lambda$. If $\lambda_n \substack{CH \\ \not \to} \lambda$ then there exists a subsequence $\{\lambda_{n_i}\}$ such that $\lambda_{n_i}$ Hausdorff converges to a lamination $\lambda'$ not containing $\lambda$. For any witness $W\subset V$, $\lambda'$ must intersect $W$ essentially. Otherwise, $\lambda_{n_i}$ is contained in $W^C$ for $i$ large enough, contradicting that $\lambda_{n_i}$ intersects every loop and short ray. In particular, taking $W$ to be the witness filled by $\lambda$, since $\lambda\not \subset \lambda'$, we have that $\lambda'$ intersects $\lambda$ transversely. Choose rays $l_{n_i}$ spiraling to $\lambda_{n_i}$. Passing to a further subsequence, we may suppose that the rays $l_{n_i}$ cover converge to a ray or loop $l$. The cliques $x_{n_i}$ converge to $x$ so by Lemma \ref{topology}, $l$ spirals to $\lambda$. This gives a point of intersection $q$ of $l$ with a leaf $m$ of $\lambda'$. As before, for any $\epsilon>0$ there exists $I(\epsilon)>0$ such that for $i\geq I(\epsilon)$, there exists a leaf $m_i$ of $\lambda_{n_i}$ with a subsegment which is $\epsilon$-Hausdorff close to a subsegment of $m$ of radius $1/\epsilon$ centered at $q$. By the definition of cover convergence, for $i\gg I(\epsilon)$, this gives an intersection of $l_{n_i}$ with the leaf $m_i$ of $\lambda_{n_i}$ and hence a point of intersection of $l_{n_i}$ with itself. This is a contradiction. The picture for this argument is the same as that of Figure \ref{crossing} with $\lambda$ replaced by $\lambda'$.

\end{proof}

From this we have the following characterization of loxodromic mapping classes acting on loop graphs of finite type surfaces:

\begin{lem} \label{loxs}
Let $V$ be a finite type surface with a puncture $p$. The mapping class $\phi\in \MCG(V;p)$ acts loxodromically on $L(V;p)$ if and only if there exists a witness $W\subset V$ for $L(V;p)$ with the property that $\phi(W)=W$ and $\phi|W$ is pseudo-Anosov.
\end{lem}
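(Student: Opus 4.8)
The proof will rest on two facts about finite type witnesses. First, if $W$ is a finite type witness for $L(V;p)$ then the inclusion $L(W;p)\hookrightarrow L(V;p)$ is a quasi-isometric embedding (Proposition~\ref{qiwitness} together with its extension to witnesses of small complexity proved above). Second, for a finite type surface $W$ with a puncture $p$ and $\xi(W)\ge 1$, the boundary $\partial L(W;p)$ is $\MCG(W;p)$-equivariantly homeomorphic to $\ELW(W;p)$ (Lemma~\ref{subimage} and Proposition~\ref{homeo}, applied with the ambient surface taken to be $W$ itself). Since a witness on which some mapping class restricts to a pseudo-Anosov, and likewise one filled by a minimal lamination, is never a thrice-punctured sphere, both facts apply in the situations below. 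I will also use the Nielsen--Thurston classification and Thurston's North--South dynamics of a pseudo-Anosov on its space of projective measured laminations.

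For the ($\Leftarrow$) direction, suppose $W\subset V$ is a witness with $\phi(W)=W$ and $g:=\phi|W$ pseudo-Anosov, with distinct fixed laminations $\lambda^{+}\neq\lambda^{-}\in\EL(W)\subset\ELW(W;p)$. The first step is to show $g$ is loxodromic on $L(W;p)$. On $\mathcal{PML}(W)$ the element $g$ has North--South dynamics with fixed points $[\lambda^{+}]$ and $[\lambda^{-}]$; transporting this through the description of the coarse Hausdorff topology on ending laminations as the quotient topology from $\ML(W)$ (Hamenst\"adt) and through the homeomorphism $\ELW(W;p)\cong\partial L(W;p)$, one finds that $g$ acts on $\partial L(W;p)$ with North--South dynamics fixing the points corresponding to $\lambda^{+}$ and $\lambda^{-}$. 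An isometry of a hyperbolic graph with North--South dynamics on its boundary has unbounded orbits and does not fix a unique boundary point, hence is neither elliptic nor parabolic, hence loxodromic. Finally, $\phi$ preserves the subgraph $L(W;p)\subset L(V;p)$ and acts on it as $g$, so for a loop $c\in L(W;p)$ the orbit $n\mapsto\phi^{n}(c)$ is a quasigeodesic of $L(W;p)$, and since $L(W;p)$ is quasi-isometrically embedded it is a quasigeodesic of $L(V;p)$; therefore $\phi$ is loxodromic on $L(V;p)$.

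For the ($\Rightarrow$) direction, suppose $\phi$ is loxodromic on $L(V;p)$, with fixed cliques $\cl^{+}(\phi)$ and $\cl^{-}(\phi)$, and let $\lambda^{+},\lambda^{-}\in\ELW(V;p)$ be the ending laminations corresponding to them under the homeomorphism $\partial L(V;p)\cong\ELW(V;p)$; by equivariance $\phi\lambda^{\pm}=\lambda^{\pm}$. Let $W$ be the essential subsurface of $V$ filled by $\lambda^{+}$, realized with geodesic boundary; by definition of $\ELW(V;p)$ it is a finite type witness, and since $W$ is determined by $\lambda^{+}$ we get $\phi(W)=W$ up to isotopy, so after an isotopy $g:=\phi|W\in\MCG(W;p)$ is well defined and fixes $\lambda^{+}$. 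Because $\phi$ preserves the quasi-isometrically embedded $L(W;p)\subset L(V;p)$ and is loxodromic on $L(V;p)$, the orbit of any loop $c\in L(W;p)$ is unbounded in $L(V;p)$, hence in $L(W;p)$; thus $g$ has infinite order, and by Nielsen--Thurston it is reducible or pseudo-Anosov. If $g$ were reducible it would preserve its nonempty canonical reduction multicurve $\mathcal R$, every component of which $\lambda^{+}$ crosses transversally since $\lambda^{+}$ fills $W$; passing to a power, $g$ would fix each complementary piece $Y$ of $\mathcal R$ and restrict on it to the identity or a pseudo-Anosov. If $g$ restricts to the identity on every piece, then $g$ is a nontrivial product of Dehn twists about $\mathcal R$, which does not fix the filling lamination $\lambda^{+}$; otherwise, the nonempty $g$-invariant lamination $\lambda^{+}\cap Y$ inside a piece carrying a pseudo-Anosov would have to be that pseudo-Anosov's stable or unstable lamination, which spirals onto $\partial Y$, contradicting that $\lambda^{+}$ crosses $\partial Y$ transversally. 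Both are absurd, so $g=\phi|W$ is pseudo-Anosov.

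The step I expect to demand the most care is the North--South dynamics transfer in the ($\Leftarrow$) direction: one must pass from projective convergence in $\mathcal{PML}(W)$ to coarse Hausdorff convergence of underlying laminations via the Hamenst\"adt--Klarreich picture recalled in Section~2, while allowing the starting lamination to fill only a proper sub-witness of $W$, so that it must first be realized as the support of a measured lamination on that sub-witness; the low-complexity case $\xi(W)=1$, the four-punctured sphere, should also be checked by hand. An alternative that handles both directions uniformly --- at the cost of quoting more --- is to use that $L(W;p)$ is quasi-isometric to $\CG(W)$ for finite type $W$, which reduces the lemma to the equivalence ``pseudo-Anosov $\Leftrightarrow$ loxodromic on the curve graph'' (Theorem~\ref{WPD} and \cite{mm}) combined with the equivariant quasi-isometric embedding $L(W;p)\hookrightarrow L(V;p)$; in that approach the ($\Rightarrow$) direction requires no analysis of reducible mapping classes.
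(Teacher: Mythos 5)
Your main argument is correct and, for the forward direction, follows essentially the same route as the paper: use the identification $\partial L(V;p)\cong\ELW(V;p)$ to produce a $\phi$-invariant minimal lamination $\lambda^+$ filling a witness $W$, deduce $\phi(W)=W$, and then show $\phi|W$ is pseudo-Anosov. Where the paper disposes of the last step in one line (if $\phi|W$ is not pseudo-Anosov then some power acts as the identity on $W$, hence fixes a loop contained in $W$, contradicting loxodromicity), you spell out the Nielsen--Thurston case analysis; your treatment of the reducible case is slightly imprecise ($\lambda^{+}\cap Y$ is a union of arcs rather than a geodesic lamination of $Y$, so it cannot literally be the stable lamination of a pseudo-Anosov piece), but the underlying fact you are reaching for --- an infinite-order mapping class fixing a minimal filling lamination is pseudo-Anosov --- is exactly the standard fact the paper invokes. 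For the backward direction the paper simply cites Bavard--Walker, whereas you give a self-contained proof via North--South dynamics on the projective measured lamination space transported to $\partial L(W;p)\cong\ELW(W;p)$ and then the quasi-isometric embedding $L(W;p)\hookrightarrow L(V;p)$. This works: the passage from weak$^*$ convergence to coarse Hausdorff convergence of supports is the Hamenst\"adt/Klarreich mechanism recalled in Section 2, and it buys independence from the external reference at the cost of having to justify that North--South dynamics on the boundary forces loxodromicity (ellipticity is ruled out by a short Gromov-product computation; this step should not be left as a bare assertion).

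One warning: the ``uniform alternative'' sketched in your final paragraph rests on a false premise. For finite type $W$ with $\xi(W)\geq 2$ the loop graph $L(W;p)$ is \emph{not} quasi-isometric to $\CG(W)$: the loop graph has proper witnesses, its Gromov boundary is $\ELW(W;p)$ rather than $\EL(W)$, and a mapping class that is pseudo-Anosov on a proper witness $W'\subsetneq W$ is loxodromic on $L(W;p)$ while being reducible on $W$ and hence not loxodromic on $\CG(W)$. That proposed reduction would in fact contradict the very lemma you are proving, so discard it and keep your main argument.
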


\begin{proof}
Since $\partial L(V;p)$ is equivariantly homeomorphic to $\ELW(V;p)$, if $\phi$ acts loxodromically then it fixes some $\lambda \in \ELW(V;p)$ and by definition, $\lambda$ fills a witness $W\subset V$. Therefore $\phi(W)=W$. If $\phi|W$ is not pseudo-Anosov then some power $\phi^n$ acts as the identity on $W$. Hence $\phi^n$ fixes any loop $l\subset L(V;p)$ contained in $W$. This is a contradiction.

On the other hand, it is proven in \cite{simultaneous} that if $\phi$ fixes a finite type witness $W$ and $\phi|W$ is pseudo-Anosov then $\phi$ acts loxodromically on $L(V;p)$.
\end{proof}

\section{Pseudo-Anosovs on finite type witnesses}

In this section we prove that mapping classes which contain pseudo-Anosovs on finite type witnesses are WWPD. This is one direction of Theorem \ref{mainthm}.

\begin{thm} \label{containpA}
Let $\phi\in \MCG(S;p)$ and suppose that there exists a finite type witness $V\subset S$ such that $\phi(V)=V$ and $\phi|V$ is pseudo-Anosov. Then $\phi$ is a WWPD element in the action of $\MCG(S;p)$ on $L(S;p)$.
\end{thm}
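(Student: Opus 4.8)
The plan is to verify the WWPD condition directly, using the two facts established in the previous sections: that $L(V;p) \hookrightarrow L(S;p)$ is a quasi-isometric embedding (so that $\partial L(V;p) \hookrightarrow \partial L(S;p)$), and that under this embedding $\partial L(V;p)$ is identified, via $\Phi$, with $\ELW(V;p)$, the space of ending laminations filling witnesses $W \subset V$. Since $\phi(V)=V$ and $\phi|V$ is pseudo-Anosov, Lemma \ref{loxs} gives that $\phi|V$ acts loxodromically on $L(V;p)$; because $L(V;p) \hookrightarrow L(S;p)$ is a quasi-isometric embedding, $\phi$ acts loxodromically on $L(S;p)$ as well, with fixed points $g^{+}, g^{-} \in \partial L(S;p)$ that lie in the image of $\partial L(V;p)$. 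Under the identification $G$, these correspond to cliques $\cl^{+}(\phi) = \Phi(\lambda^{+})$ and $\cl^{-}(\phi) = \Phi(\lambda^{-})$, where $\lambda^{\pm} \in \ELW(V;p)$ are the (coarse-Hausdorff) stable and unstable laminations of the pseudo-Anosov $\phi|W$ on the witness $W \subset V$ that it fills (here $W$ may be taken to be all of $V$ after possibly passing to the subsurface actually filled; in any case $W$ is a finite type witness).

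Next I would take a sequence $h_n \in \MCG(S;p)$ with $h_n(g^{+}) \to g^{+}$ and $h_n(g^{-}) \to g^{-}$ in $\partial L(S;p)$ and show that $h_n(g^{\pm}) = g^{\pm}$ eventually. Translating through $G$, this says: $h_n$ acts on $\E(S;p)$ with $h_n(\cl^{+}(\phi)) \to \cl^{+}(\phi)$ and $h_n(\cl^{-}(\phi)) \to \cl^{-}(\phi)$. Now I apply Lemma \ref{topology}: since $\cl^{+}(\phi)$ is a \emph{finite} clique, convergence $h_n(\cl^{+}(\phi)) \to \cl^{+}(\phi)$ in $\E(S;p)$ means that for any subsequence and any choice of high-filling rays $k_{n_i} \in h_{n_i}(\cl^{+}(\phi))$ that cover converges to a ray or loop $k$, we have $k \in \cl^{+}(\phi)$. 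Choosing $k_{n_i} = h_{n_i}(l_{n_i})$ with $l_{n_i} \in \cl^{+}(\phi)$, and using that $\cl^{+}(\phi)$ is finite so we may assume $l_{n_i} = l$ is constant, and that each $h_{n_i}(l)$ spirals onto $h_{n_i}(\lambda^{+})$ (since $h_{n_i}$ is a homeomorphism of $S$ fixing $p$), a cover-convergent subsequence of $h_{n_i}(l)$ spirals onto a limit of the laminations $h_{n_i}(\lambda^{+})$. Passing to the Hausdorff limit $\mu$ of a subsequence of $h_{n_i}(\lambda^{+})$ in $\GL(S)$ (using compactness of $\GL$ of a finite type subsurface, after observing the $h_{n_i}(\lambda^{+})$ remain in a fixed finite type subsurface — more on this below), the limit ray's accumulation set is a minimal sublamination of $\mu$; being an element of $\cl^{+}(\phi)$, it must equal $\lambda^{+}$. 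Thus $h_{n_i}(\lambda^{+}) \toCH \lambda^{+}$, and similarly $h_{n_i}(\lambda^{-}) \toCH \lambda^{-}$.

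At this point I would like to invoke Corollary \ref{endinglamconv} — which says that for the pseudo-Anosov $\phi|W$ on the finite type surface $W$, if $\psi_n(\lambda^{+}) \toCH \lambda^{+}$ and $\psi_n(\lambda^{-}) \toCH \lambda^{-}$ then eventually $\psi_n$ fixes $\lambda^{\pm}$ — but the $h_n$ need not preserve $W$ or even any finite type subsurface. The main obstacle is therefore bridging this gap: I must first show that $h_n(\lambda^{+})$ eventually lies in $W$ (equivalently, that $h_n(W)$ eventually ``carries'' $\lambda^{+}$ appropriately), so that the restriction of $h_n$ to the relevant finite type subsurface makes sense. The key point is that $\lambda^{+} \subset W$ and $\lambda^{+}$ fills $W$, so the condition $h_n(\lambda^{+}) \toCH \lambda^{+}$ with $\lambda^{+}$ filling $W$ forces $h_n(\lambda^{+})$ to intersect $W$ essentially and, by Hausdorff convergence to a lamination containing $\lambda^{+}$ together with the fact that $\lambda^{+}$ has a once-punctured ideal polygon complementary region at $p$, forces $h_n(\lambda^{+})$ to be contained in $W$ for $n$ large (the argument is parallel to the essential-intersection argument in the proof of Proposition \ref{homeo}). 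Hence $h_n(\lambda^{+}), h_n(\lambda^{-}) \in \ELW(W;p) \subset \ELW(V;p)$ eventually. Now I would like to conclude: since $h_n(\lambda^{+})$ and $h_n(\lambda^{-})$ both lie in $W$ and $h_n(\lambda^{\pm}) \toCH \lambda^{\pm}$, and since $\lambda^{+}, \lambda^{-}$ together fill $W$, the map $h_n$ must carry $W$ to a subsurface containing both $h_n(\lambda^{\pm})$, which (being filled together by $\lambda^{\pm}$, and with $h_n(\lambda^{\pm})$ close to $\lambda^{\pm}$) forces $h_n(W) = W$ eventually, so $h_n|W \in \MCG(W;p)$; then Corollary \ref{endinglamconv} applies to $\psi_n = h_n|W$ and yields $h_n(\lambda^{+}) = \lambda^{+}$, $h_n(\lambda^{-}) = \lambda^{-}$ for $n$ large. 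Reading this back through $\Phi$ and $G$ gives $h_n(g^{+}) = g^{+}$ and $h_n(g^{-}) = g^{-}$ for $n$ large, which is exactly the WWPD condition. Thus $\phi$ is WWPD in the action of $\MCG(S;p)$ on $L(S;p)$.
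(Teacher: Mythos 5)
Your overall skeleton matches the paper's: identify the fixed points of $\phi$ on $\partial L(S;p)$ with $\Phi(\lambda^{\pm})$, show that a sequence $h_n$ with $h_n(g^{\pm})\to g^{\pm}$ eventually preserves the finite type witness, restrict to that witness, and finish with Corollary \ref{endinglamconv}. But the step you yourself flag as ``the main obstacle'' --- showing that $h_n$ eventually preserves $V$ --- is precisely the content of the paper's Lemma \ref{fixedsurf}, and your argument for it has a genuine gap, in two places. First, you extract a Hausdorff limit $\mu$ of the laminations $h_{n_i}(\lambda^{+})$ ``using compactness of $\GL$ of a finite type subsurface, after observing the $h_{n_i}(\lambda^{+})$ remain in a fixed finite type subsurface --- more on this below,'' but the later discussion only derives containment of $h_n(\lambda^{+})$ in $W$ \emph{from} the coarse Hausdorff convergence $h_n(\lambda^{+})\toCH\lambda^{+}$, which was itself obtained from that compactness. $\GL(S)$ is not compact for $S$ of infinite type, and the witnesses $h_n(V)$ filled by $h_n(\lambda^{+})$ could a priori drift off to infinity, so the circularity is not removable by reordering. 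Second, and independently, the inference from ``$h_{n_i}(l)$ cover converges to $k\in\cl^{+}(\phi)$'' to ``$h_{n_i}(\lambda^{+})$ converges to something containing $\lambda^{+}$'' is unjustified: cover convergence controls only the initial segments of the rays (their crossings of the equator, equivalently the endpoints of their lifts to $\hat S$), whereas $h_{n_i}(\lambda^{+})$ is the \emph{accumulation set} of the tail of $h_{n_i}(l)$. The accumulation set of the limit ray need not be the limit of the accumulation sets: a ray can begin exactly like one spiraling onto $\lambda^{+}$ and then wander far outside $V$ before accumulating.

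The paper circumvents both problems by never taking limits of the laminations $h_n(\lambda^{+})$ at all. In Lemma \ref{fixedsurf} one assumes $\tau_n(V)\neq V$, so some $\tau_n(\gamma)$ with $\gamma\subset\partial V$ crosses $V$ essentially and hence meets a \emph{fixed} compact core $V_0\subset V$; compactness of the unit tangent bundle $T_1(V_0)$ (not of any space of laminations) produces a limiting simple geodesic $L$ through $V_0$, which necessarily crosses $\lambda^{+}$ or $\lambda^{-}$ transversely. One then uses Lemma \ref{topology} only to say that $\tau_n(l)$ cover converges to a ray $m$ spiraling onto $\lambda^{+}$; since the crossing of $m$ with $L$ occurs at bounded distance from $p$ --- exactly the regime cover convergence does control --- one gets $\tau_n(\gamma)\cap\tau_n(l)\neq\emptyset$ for large $n$, contradicting $\gamma\cap l=\emptyset$. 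If you want to salvage your write-up, you should replace your limit-of-laminations argument with an argument of this kind; the remainder of your proof (restriction to $V$, equivariance of $\Phi$ and $\Psi$, and the appeal to Corollary \ref{endinglamconv}) is correct and agrees with the paper.
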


The following lemma allows us to reduce the proof of Theorem \ref{containpA} to the fact that a pseudo-Anosov on a finite-type surface is WPD with respect to the mapping class group action on the curve graph of that surface.

\begin{lem} \label{fixedsurf}
Let $V\subset S$ be a finite type witness and $\lambda^+$, $\lambda^-$ transverse ending laminations on $V$. Suppose that $\tau_n\in \MCG(S;p)$ and \[\tau_n(\Phi(\lambda^+))\to \Phi(\lambda^+) \text{ and } \tau_n(\Phi(\lambda^-))\to \Phi(\lambda^-)\] in $\E(S;p)$ as $n\to \infty$. Then there exists $N>0$ such that $\tau_n(V)=V$ for all $n\geq N$.
\end{lem}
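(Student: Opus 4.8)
The plan is to extract, from the convergence hypothesis in $\E(S;p)$, enough control on the action of $\tau_n$ on the boundary $\partial\hat S$ of the conical cover to force $\tau_n(V) = V$ for large $n$. First I would use Lemma \ref{topology}: since $\Phi(\lambda^{\pm})$ are \emph{finite} cliques (by the Bavard--Walker finiteness of $\cl^\pm$, or directly because the complementary polygon of an ending lamination on a finite type witness has finitely many vertices), condition (3) of Lemma \ref{topology} applies. So for every open neighborhood $\mathcal O^{\pm}$ of the endpoint set $Q^{\pm}$ of lifts of rays in $\Phi(\lambda^{\pm})$, there is $N(\mathcal O^{\pm})$ beyond which any ray in $\tau_n(\Phi(\lambda^{\pm}))$ lifts to a geodesic in $\hat S$ ending in $\mathcal O^{\pm}$. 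Equivalently, the finite set $\tau_n(Q^{\pm})$ is eventually contained in any prescribed neighborhood of $Q^{\pm}$.

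Next I would translate this into a statement about the laminations themselves. The rays in $\Phi(\lambda^+)$ are exactly the rays spiraling onto $\lambda^+$ (to the vertices of the once-punctured complementary polygon $P^+$), and $\tau_n(\Phi(\lambda^+))$ consists of the rays spiraling onto $\tau_n(\lambda^+)$. Using the cover-convergence formulation (condition (2) of Lemma \ref{topology}) together with compactness of $\GL(V')$ for any fixed finite type subsurface containing $V$ and its $\tau_n$-images that matter — more robustly, working directly in $\hat S$ and passing to subsequences — I would argue that $\tau_n(\lambda^+) \toCH \lambda^+$ and $\tau_n(\lambda^-) \toCH \lambda^-$ on $S$, in the sense that any Hausdorff-convergent subsequence of $\tau_n(\lambda^{\pm})$ has a limit containing $\lambda^{\pm}$. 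The point is that if a subsequence of rays $l_{n_i}$ spiraling onto $\tau_{n_i}(\lambda^+)$ cover converges to $l$, then $l \in \Phi(\lambda^+)$, so $l$ spirals onto $\lambda^+$; arguing as in the proof of Proposition \ref{homeo} (the crossing argument of Figure \ref{crossing}), a Hausdorff limit of $\tau_{n_i}(\lambda^+)$ that failed to contain $\lambda^+$ would cross $l$, producing a self-intersection of $l_{n_i}$ for large $i$.

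Now I would restrict attention to $V$. Realize $\partial V$ by geodesics; since $\lambda^{\pm}$ fill $V$ and are contained in $V$, and $\tau_n(\lambda^{\pm})$ coarse-Hausdorff converges back to $\lambda^{\pm}$, for $n$ large the surface $\tau_n(V)$ — which is the subsurface filled together by the witnesses of $\tau_n(\lambda^+)$ and $\tau_n(\lambda^-)$ — must contain a neighborhood of (indeed, coarsely agree with) $V$: if $\tau_n(V)$ did not contain $V$ up to isotopy, some geodesic boundary curve of $\tau_n(V)$ would cross $\lambda^+$ or $\lambda^-$, but $\tau_n(\partial V)$ is disjoint from $\tau_n(\lambda^{\pm})$ which is forced close to $\lambda^{\pm}$; conversely a symmetric argument bounds $\tau_n(V)$ inside $V$. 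Since there are only finitely many isotopy classes of essential finite type subsurfaces of $S$ within bounded complexity that coarsely agree with $V$ (indeed $\tau_n(V)$ has the same complexity as $V$), and a sequence of such subsurfaces all close to $V$ must eventually \emph{equal} $V$, we conclude $\tau_n(V) = V$ for all large $n$.

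The main obstacle I anticipate is the middle step: turning the cover-convergence/boundary control from Lemma \ref{topology} into genuine coarse Hausdorff control of $\tau_n(\lambda^{\pm})$ \emph{on $S$}, since $\tau_n$ need not be quasi-isometries of the chosen metric and the images $\tau_n(\lambda^{\pm})$ live in a priori unbounded subsurfaces. The resolution is to do all limiting arguments upstairs in $\hat S$ and on $\partial\hat S$ where $\MCG(S;p)$ does act by homeomorphisms (by the Bavard--Walker conical cover construction), extracting Hausdorff-convergent subsequences of the \emph{lifts} and only at the end pushing the conclusion $\tau_n(V)=V$ back down to $S$; the crossing argument of Proposition \ref{homeo} is exactly the tool that prevents the limit from being too small.
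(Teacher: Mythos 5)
Your overall strategy (argue by contradiction, use Lemma \ref{topology} to control where the rays in $\tau_n(\Phi(\lambda^\pm))$ can go, and derive a forbidden crossing) is the right shape, and your reduction to ``some component $\tau_n(\gamma)$ of $\tau_n(\partial V)$ essentially intersects $V$'' matches the paper. But the central step has a genuine gap. You want to conclude: $\tau_n(\gamma)$ crosses $\lambda^+$ or $\lambda^-$ transversely, $\tau_n(\lambda^\pm)$ is Hausdorff-close to a lamination containing $\lambda^\pm$, hence $\tau_n(\gamma)$ crosses $\tau_n(\lambda^\pm)$ --- contradiction with disjointness. This inference is only valid for a \emph{fixed} transverse geodesic: transversality is an open condition, but not uniformly so. Here the geodesic $\tau_n(\gamma)$ varies with $n$, and its angle of intersection with $\lambda^\pm$ may degenerate to $0$ exactly as fast as $\tau_n(\lambda^\pm)$ approaches $\lambda^\pm$, so no contradiction results termwise. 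The paper closes exactly this gap with a compactness argument you do not have: it builds a compact core $V_0\subset V$ (removing cusp and boundary collars) met by every $\tau_n(\gamma)$, extracts a convergent sequence of unit tangent vectors $(x_n,v_n)$ along $\tau_n(\gamma)$ in $T_1(V_0)$, and works with the single limiting geodesic $L$, which crosses $\lambda^+$ or $\lambda^-$ transversely at a definite angle. (The ``or'' is essential: $L$ itself may be a leaf of, or spiral onto, one of the two laminations, which is why both $\lambda^+$ and $\lambda^-$ are in the hypothesis.) Only then does one compare $L$ with a single cover-limit ray $m$ of $\tau_n(l)$ and push the crossing back to $\tau_n(\gamma)\cap\tau_n(l)$.

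A second, related problem is your intermediate claim that $\tau_n(\lambda^\pm)\toCH\lambda^\pm$ ``on $S$.'' Since $S$ has infinite type, $\GL(S)$ is not compact and the subsurfaces $\tau_n(V)$ may escape every compact set, so Hausdorff-convergent subsequences of $\tau_n(\lambda^\pm)$ need not exist; you flag this and propose to work in $\hat S$, but you do not actually carry out that argument, and it is not clear how boundary-circle control alone recovers Hausdorff control of the laminations downstairs. The paper sidesteps this entirely: it never takes limits of the laminations $\tau_n(\lambda^\pm)$, only cover-limits of a single ray $\tau_n(l)$ for a chosen $l$ spiraling onto $\lambda^+$, which is exactly what Lemma \ref{topology}(2) is designed to handle. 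Your closing step (``finitely many isotopy classes of subsurfaces coarsely agreeing with $V$, hence eventually equal'') is also unsubstantiated, and is unnecessary in the paper's route, which derives the contradiction directly from the assumption $\tau_n(V)\neq V$.
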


\begin{proof}[Proof of Theorem \ref{containpA} using Lemma \ref{fixedsurf}]
Let $V$ be a finite type witness and $\phi|V$ be pseudo-Anosov. Let $\lambda^\pm \subset \EL(V)$ be the fixed laminations of $\phi$. Then since $\phi$ preserves $L(V;p)$, the fixed points of $\phi$ in $\partial L(S;p)$ are $F(\Phi(\lambda^\pm))$. Suppose that there are $\tau_n \in \MCG(S;p)$ with \[\tau_n(F(\Phi(\lambda^+)))\to F(\Phi(\lambda^+)) \text{ and }\tau_n(F(\Phi(\lambda^-)))\to F(\Phi(\lambda^-))\] in $\partial L(S;p)$. Since $F$ is an equivariant homeomorphism, this is equivalent to \[\tau_n(\Phi(\lambda^+))\to \Phi(\lambda^+) \text{ and } \tau_n(\Phi(\lambda^-))\to \Phi(\lambda^-).\] We want to show that \[\tau_n(F(\Phi(\lambda^+)))=F(\Phi(\lambda^+)) \text{ and } \tau_n(F(\Phi(\lambda^-)))=F(\Phi(\lambda^-))\] or equivalently \[\tau_n(\Phi(\lambda^+))=\Phi(\lambda^+) \text{ and } \tau_n(\Phi(\lambda^-))=\Phi(\lambda^-)\] for all sufficiently large $n$.

By Lemma \ref{fixedsurf}, there exists $N$ such that $\tau_n(V)=V$ for all $n\geq N$. Let $n\geq N$. Denote $\tau_n|V=\psi_n\in \MCG(V;p)$. We have \[\tau_n(\Phi(\lambda^\pm))=\psi_n(\Phi(\lambda^\pm))\subset G(\partial L(V;p)).\] We use the homeomorphism $\Psi:G(\partial L(V;p))\to \ELW(V;p)$. We have \[\Psi(\psi_n(\Phi(\lambda^+)))\to \Psi(\Phi(\lambda^+))=\lambda^+ \text{ and similarly } \Psi(\psi_n(\Phi(\lambda^-)))\to \lambda^-\] in the coarse Hausdorff topology on $\ELW(V;p)$. Furthermore, we have $\lambda^+\in \EL(V)$ and therefore \[\Psi(\psi_n(\Phi(\lambda^+)))=\Psi\Phi(\psi_n(\lambda^+))=\psi_n(\lambda^+)\in \EL(V)\] for all $n\geq N$ and similarly for $\lambda^-$. Hence $\psi_n(\lambda^+)$ coarse Hausdorff converges to $\lambda^+$ and $\psi_n(\lambda^-)$ coarse Hausdorff converges to $\lambda^-$. By Corollary \ref{endinglamconv} we must have $\psi_n(\lambda^+)=\lambda^+$ and $\psi_n(\lambda^-)=\lambda^-$ for all sufficiently large $n$. Therefore $\psi_n(\Phi(\lambda^+))=\Phi(\psi_n(\lambda^+))=\Phi(\lambda^+)$ and $\psi_n(\Phi(\lambda^-))=\Phi(\lambda^-)$ for all $n$ sufficiently large. This proves the theorem.

\end{proof}

\begin{proof}[Proof of Lemma \ref{fixedsurf}]
Suppose that $\{\tau_n\}\subset MCG(S;p)$ is a sequence as described in the lemma and suppose for contradiction that $\tau_n(V)\neq V$ for infinitely many $n$. Then by passing to a subsequence we may suppose that $\tau_n(V)\neq V$ for all $n$. Hence since $V$ is finite type, $\tau_n(\partial V)$ must intersect $V$ essentially for all $n$. By passing to another subsequence, we find a component $\gamma$ of $\partial V$ such that $\tau_n(\gamma)$ intersects $V$ essentially for all $n$.

We construct a useful compact subsurface $V_0\subset V$ such that $\tau_n(\gamma)\cap V_0\neq \emptyset$ for all $n$ (when $\tau_n(\gamma)$ is realized as a geodesic). This $V_0$ will have the property that any simple geodesic meeting $V_0$ intersects either $\lambda^+$ or $\lambda^-$ transversely. First of all, for each puncture $q$ of $S$ contained in $V$, there is an open neighborhood $U_q$ of $q$ with the property that any simple geodesic which meets $U_q$ must be asymptotic to $q$ (see \cite{notes} Corollary 2.2.4 or \cite{mcshane} Lemma 2). Since $V$ is finite type, we may take the finitely many neighborhoods $U_q$ to be pairwise disjoint. We remove them to form the compact subsurface $V_1\subset V$. Secondly, for each boundary component $\delta$ of $V$ there is a half open tubular neighborhood $U_\delta$ of $\delta$ in $V$ for which any simple geodesic meeting $U_\delta$ but not $V \setminus U_\delta$ must be equal to $\delta$. We may take the neighborhoods $U_\delta$ to be disjoint from each other, from the neighborhoods $U_q$ of the cusps, and from the laminations $\lambda^+$ and $\lambda^-$. We form $V_0$ by removing the neighborhoods $U_\delta$ from $V_1$.

We briefly explain why $V_0$ has the property that any simple geodesic meeting $V_0$ intersects either $\lambda^+$ or $\lambda^-$ transversely. First of all, any simple geodesic meeting $V$ besides the components of $\partial V$ either intersects $\lambda^+$ transversely or spirals onto it. However, a simple geodesic which spirals onto $\lambda^+$ intersects $\lambda^-$ transversely. Hence, since any geodesic which meets $V_0$ is not equal to a boundary component of $V$, this gives the desired property.

Now from the above description it is clear that $\tau_n(\gamma)\cap V_0\neq \emptyset$ for all $n$. By compactness of the unit tangent bundle $T_1(V_0)$, we may choose tangent vectors $(x_n,v_n)$ along $\tau_n(\gamma)$ which converge to a vector $(x,v)\in T_1(V_0)$. The geodesic $L$ through $(x,v)$ is simple and meets $V_0$. Consequently $L$ intersects either $\lambda^+$ or $\lambda^-$ transversely.

Suppose for instance that $L$ intersects $\lambda^+$ transversely. An identical argument works if $L$ intersects $\lambda^-$ transversely. Choose a ray (based at $p$) $l$ that spirals onto $\lambda^+$. We may pass to a further subsequence to obtain that $\tau_n(l)$ cover-converges to a ray or loop $m$. Since $\tau_n(l)$ cover converges to $m$, $l$ lies in the clique $\Phi(\lambda^+)$, and the cliques $\tau_n(\Phi(\lambda^+))$ converge to $\Phi(\lambda^+)$, we have by Lemma \ref{topology} that $m\in \Phi(\lambda^+)$ so $m$ must also spiral onto $\lambda^+$. Hence $m$ meets $L$ transversely. Since we have $(x_n,v_n)\to (x,v)$ and $(x_n,v_n)$ is tangent to $\tau_n(\gamma)$ for all $n$, we also have that $\tau_n(\gamma)$ meets $m$ transversely for $n$ large enough. Finally, since $\tau_n(l)$ cover converges to $m$, $\tau_n(\gamma)$ meets $\tau_n(l)$ transversely for all $n$ large enough. However, this is a contradiction since $l$ and $\gamma$ are disjoint and $\tau_n$ is a homeomorphism.
\end{proof}

\section{Mapping classes that do not contain pseudo-Anosovs on finite type witnesses}

In this section we prove the other direction of Theorem \ref{mainthm}. First we give a criterion for showing that a mapping class contains a pseudo-Anosov on a finite-type witness.

\begin{lem} \label{preservewitness}
Let $\phi\in \MCG(S;p)$ be loxodromic in the action of $\MCG(S;p)$ on $L(S;p)$. Suppose that every ray in $\cl^{\pm}(\phi)$ is contained in a finite type subsurface of $S$. Then there is a finite type witness $V\subset S$ for which $\phi|V$ is pseudo-Anosov.
\end{lem}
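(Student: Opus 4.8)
The plan is to manufacture a finite type $\phi$-invariant witness directly out of the attracting clique $\cl^{+}(\phi)$, and then close the argument with Lemma~\ref{loxs}.

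First I would extract a lamination. Pick any ray $l^{+}\in\cl^{+}(\phi)$; by hypothesis it lies in a finite type subsurface of $S$, so the geodesic lamination $\lambda^{+}:=\overline{l^{+}}\setminus l^{+}$ is contained in a finite type surface. Since $l^{+}$ is high-filling and contained in a finite type subsurface, the arguments in the proof of Lemma~\ref{subimage} apply verbatim: $l^{+}$ is an isolated leaf of $\overline{l^{+}}$, hence $\lambda^{+}$ is minimal, and the complementary region of $\lambda^{+}$ containing $p$ is a once-punctured ideal polygon. Consequently $\lambda^{+}$ fills an essential subsurface $W^{+}\subset S$ which is a witness for $L(S;p)$ and which is finite type, and $\lambda^{+}\in\EL(W^{+})$. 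As a finite type surface carrying a filling lamination, $W^{+}$ has complexity $\xi(W^{+})\geq 1$, so $L(W^{+};p)$ contains a simple loop based at $p$ and has infinite diameter.

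Next I would check $\phi(W^{+})=W^{+}$. Being loxodromic, $\phi$ fixes the clique $\cl^{+}(\phi)$, so $\phi(l^{+})\in\cl^{+}(\phi)$ and in particular $\phi(l^{+})$ has a representative disjoint from $l^{+}$. By the dichotomy established in the proof of Lemma~\ref{subimage} — any ray is either transverse to $\lambda^{+}$, and then crosses $l^{+}$, or else spirals onto $\lambda^{+}$ — the ray $\phi(l^{+})$ must spiral onto $\lambda^{+}$. Hence $\phi(\lambda^{+})=\overline{\phi(l^{+})}\setminus\phi(l^{+})=\lambda^{+}$, and since $W^{+}$ is the subsurface canonically filled by $\lambda^{+}$ we get $\phi(W^{+})=W^{+}$, so $\phi$ induces $\phi|W^{+}\in\MCG(W^{+};p)$. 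The same dichotomy shows that \emph{every} ray of $\cl^{+}(\phi)$ is disjoint from $l^{+}$, hence spirals onto $\lambda^{+}$, hence is contained in $W^{+}$; so by Lemma~\ref{subimage}, $\cl^{+}(\phi)\in G(\partial L(W^{+};p))$, i.e.\ the attracting fixed point $F(\cl^{+}(\phi))$ of $\phi$ lies in $\partial L(W^{+};p)\subset\partial L(S;p)$. Now $L(W^{+};p)\hookrightarrow L(S;p)$ is a quasi-isometric embedding (Proposition~\ref{qiwitness} together with the following proposition for the low-complexity case), and since $\phi(W^{+})=W^{+}$ this embedding is $\phi|W^{+}$-equivariant, so $\partial L(W^{+};p)$ is a closed $\phi$-invariant subset of $\partial L(S;p)$ containing $F(\cl^{+}(\phi))$. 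Fixing a loop $c\in L(W^{+};p)$, the orbit in $L(S;p)$ satisfies $\phi^{n}(c)\to F(\cl^{+}(\phi))$ and $\phi^{-n}(c)\to F(\cl^{-}(\phi))$ as $n\to\infty$, so the repelling fixed point $F(\cl^{-}(\phi))$ also lies in the closed set $\partial L(W^{+};p)$. This orbit is unbounded in $L(W^{+};p)$ (it is unbounded in $L(S;p)$ and the inclusion is a quasi-isometric embedding), and $\phi|W^{+}$ fixes the two distinct boundary points $F(\cl^{\pm}(\phi))$ of $L(W^{+};p)$; a non-loxodromic isometry with an unbounded orbit is parabolic and fixes a unique boundary point, so $\phi|W^{+}$ is loxodromic on $L(W^{+};p)$. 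Finally, Lemma~\ref{loxs} applied to the finite type surface $W^{+}$ produces a witness $W\subset W^{+}$ for $L(W^{+};p)$ with $\phi(W)=W$ and $\phi|W$ pseudo-Anosov; a witness for $L(W^{+};p)$ inside the witness $W^{+}$ is again a witness for $L(S;p)$, so $W$ is a finite type witness for $L(S;p)$ with $\phi|W$ pseudo-Anosov, as desired.

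The step I expect to be the main obstacle is transferring the loxodromic north--south dynamics of $\phi$ on $L(S;p)$ down to $L(W^{+};p)$ — getting both fixed points into $\partial L(W^{+};p)$ via the identification of Lemma~\ref{subimage} and the quasi-isometric embedding — together with the bookkeeping in the first two steps that $\lambda^{+}$ genuinely fills a finite type \emph{witness} that $\phi$ preserves. I note that the argument above only uses the hypothesis on $\cl^{+}(\phi)$; the symmetric hypothesis on $\cl^{-}(\phi)$ would be needed for the alternative route of identifying $W^{+}$ with the subsurface filled by the repelling lamination, which one may prefer if the closed-invariant-set step is phrased differently.
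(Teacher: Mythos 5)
Your proof is correct and follows essentially the same route as the paper: extract the minimal lamination $\lambda^+$ from a ray of $\cl^+(\phi)$, show (via the high-filling/once-punctured-polygon argument from Lemma~\ref{subimage}) that it fills a finite type witness preserved by $\phi$, and then combine the quasi-isometric embedding of that witness's loop graph with Lemma~\ref{loxs}. The only real divergence is the final loxodromicity step, where the paper argues directly that the orbit $\{\phi^n(c)\}$ is a quasi-geodesic in the subsurface loop graph (being one in $L(S;p)$ and lying in a quasi-isometrically embedded subgraph), which is cleaner than your appeal to the elliptic/parabolic/loxodromic classification --- though your version goes through, since an isometry whose orbit accumulates at two distinct boundary points must be loxodromic.
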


\begin{proof}
Consider $l\in \cl^+(\phi)$. There is a finite type subsurface $U$ containing it. The $\omega$-limit set of $l$ is a minimal geodesic lamination $\lambda$ contained in $U$ (see the argument in the proof of Lemma \ref{subimage}). Let $V\subset U$ be the essential subsurface filled by $\lambda$. We claim that $V$ contains $p$. If not, then the component of $U \setminus \lambda$ containing $p$ is a crowned hyperbolic surface $W$ such that either
\begin{itemize}
\item $\operatorname{genus}(W)>0$ or 
\item $W$ has at least two punctures.
\end{itemize}

\noindent In either case, we see that there is a nontrivial loop $c\in L(S;p)$ contained in $W$. Realize $c$ by a geodesic. We have that $c\cap l$ is finite (for this again see the proof of Lemma \ref{subimage}). However we also have $c\cap l\neq \emptyset$ since $l$ is high-filling. Orient $c$. There is a first point $y$ of $c\cap l$ with respect to the orientation on $c$. The oriented loop $c|(p,y] \cup l | [y,p)$ is disjoint from $l$ up to isotopy and is nontrivial since $c$ and $l$ are in minimal position. This contradicts that $l$ is high-filling. This proves our claim that $V$ contains $p$. 

This also proves that the component of $S \setminus \lambda$ containing $p$ is a once-punctured ideal polygon. Hence, any other ray $l'\in \cl^+(\phi)$ also spirals onto $\lambda$ (since $l'$ and $l$ are disjoint). Since $\phi$ permutes the rays in $\cl^+(\phi)$, we have $\phi(\lambda)=\lambda$ and therefore also $\phi(V)=V$. So $\phi$ preserves the finite type witness $V$.

We claim finally that $\phi$ preserves a witness $V'\subset V$ for which $\phi|V'$ is pseudo-Anosov. For $\phi$ preserves $L(V;p)\subset L(S;p)$ and moreover $L(V;p)$ is quasi-isometrically embedded. If such a $V'$ did not exist then $\phi$ would not act loxodromically on $L(V;p)$ by Lemma \ref{loxs}. But if we then choose $c\in L(V;p)\subset L(S;p)$, this implies that the orbit $\{\phi^n(c)\}_{n\in\Z}$ is not a quasi-geodesic in $L(V;p)$ and hence it is not a quasi-geodesic in $L(S;p)$. This contradicts that $\phi$ is loxodromic.
\end{proof}

Hence to finish the proof Theorem \ref{mainthm} it suffices to show that the rays in the fixed cliques of a WWPD mapping class are all contained in finite type subsurfaces of $S$. This is what we do in the proof that follows.

\begin{thm}
Suppose that $\phi\in \MCG(S;p)$ is WWPD with respect to the action of $\MCG(S;p)$ on $L(S;p)$. Then there exists a finite type witness $V\subset S$ such that $\phi(V)=V$ and $\phi|V$ is pseudo-Anosov.
\end{thm}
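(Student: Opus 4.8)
The plan is to deduce the statement from Lemma~\ref{preservewitness}. By definition a WWPD element is loxodromic, so $\phi$ fixes exactly the two finite cliques $\cl^+(\phi)$ and $\cl^-(\phi)$ of $\E(S;p)$. Lemma~\ref{preservewitness} says that if every ray in $\cl^+(\phi)\cup\cl^-(\phi)$ is contained in a finite type subsurface then $\phi$ preserves a finite type witness $V$ with $\phi|V$ pseudo-Anosov (its proof also yields $\phi(V)=V$), which is exactly what we want. So it suffices to prove the contrapositive: \emph{if some ray $l$ in $\cl^+(\phi)$ (or in $\cl^-(\phi)$, the argument being identical) is not contained in any finite type subsurface of $S$, then $\phi$ is not WWPD.} I would prove this by exhibiting a sequence $h_n\in\MCG(S;p)$ with $h_n(\cl^\pm(\phi))\to\cl^\pm(\phi)$ in $\E(S;p)$ but $h_n(\cl^+(\phi))\ne\cl^+(\phi)$ for all $n$; since $F\colon\E(S;p)\to\partial L(S;p)$ is an equivariant homeomorphism, such a sequence contradicts WWPD.

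To build the $h_n$: as in the proofs of Lemma~\ref{subimage} and Lemma~\ref{preservewitness}, the $\omega$-limit set $\lambda=\overline{l}\setminus l$ is a minimal geodesic lamination onto which $l$ spirals, and the component of $S\setminus\lambda$ containing $p$ is a once-punctured ideal polygon. A short argument (a geodesic spiraling onto $\lambda$ lies, outside a compact set around $p$, inside any collar of $\lambda$) shows that $l$ is contained in a finite type subsurface if and only if $\lambda$ is; so by hypothesis the essential subsurface $W\subset S$ filled by $\lambda$ is of infinite type. I would then choose essential simple closed curves $c_n\subset W$ that escape every compact subset of $S$; since $\lambda$ fills $W$ each $c_n$ crosses $\lambda$, and since $l$ spirals onto $\lambda$ this forces $i(c_n,l)\ge 1$. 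Put $h_n=T_{c_n}$.

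It remains to check the two properties. For convergence I would use Lemma~\ref{topology}(2): given a subsequence and rays $k_{n_i}\in h_{n_i}(\cl^+(\phi))$ cover-converging to a ray or loop $k$, finiteness of $\cl^+(\phi)$ lets me assume $k_{n_i}=T_{c_{n_i}}(m)$ for a fixed $m\in\cl^+(\phi)$; since $T_{c_n}$ is supported in an annular neighborhood of $c_n$ that escapes every compact set, $T_{c_n}(m)$ $N$-begins like $m$ (crosses the same initial arcs of the equator $\mathcal{W}$) for every $N$ once $n$ is large, so $T_{c_n}(m)$ cover-converges to $m$ and hence $k=m\in\cl^+(\phi)$; the same works for $\cl^-(\phi)$. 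For non-fixing, the standard Dehn-twist intersection estimate (valid also for simple rays) gives $i(T_{c_n}(l),l)\ge i(c_n,l)^2-i(l,l)=i(c_n,l)^2\ge 1$, so $T_{c_n}(l)$ is neither equal to nor disjoint from $l$; as every ray in the clique $\cl^+(\phi)$ is disjoint from $l$, this forces $T_{c_n}(l)\notin\cl^+(\phi)$, hence $h_n(\cl^+(\phi))\ne\cl^+(\phi)$.

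The main obstacle I anticipate is the construction of the curves $c_n$: one must genuinely argue that "$l$ not contained in a finite type subsurface" forces the supporting subsurface $W$ of $\lambda$ to be of infinite type, and then extract essential simple closed curves heading out an end of $W$ which are essential in $S$ \emph{and} leave every compact subset of $S$ (not merely of $W$). There is also the routine but not entirely trivial point that twisting far out in the annulus around $c_n$ leaves unchanged the initial equator-crossing data of a ray, which is what makes the convergence step in Lemma~\ref{topology} go through.
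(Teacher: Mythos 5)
Your overall strategy is the paper's: reduce to Lemma \ref{preservewitness} and, when some $l\in\cl^+(\phi)$ escapes every finite type subsurface, contradict WWPD with Dehn twists about curves that go to infinity while still meeting $l$. But the step where you produce those curves has a genuine gap. You extract them from the limit lamination $\lambda=\overline{l}\setminus l$ and its supporting subsurface $W$, asserting ``as in the proofs of Lemma \ref{subimage} and Lemma \ref{preservewitness}'' that $\lambda$ is minimal, that $l$ spirals onto it, and that the complementary component at $p$ is a once-punctured ideal polygon. Those proofs establish these facts only under the hypothesis that $l$ \emph{is} contained in a finite type subsurface (they invoke the structure theory of geodesic laminations on finite type surfaces: isolated leaves spiral onto minimal sublaminations or run into cusps). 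Here you are in exactly the opposite situation: $\overline{l}$ need not be compact, $\lambda$ need not be minimal, ``fills an essential subsurface $W$'' and ``spirals onto $\lambda$'' are neither defined nor established, and so the existence of essential simple closed curves $c_n\subset W$ escaping every compact subset of $S$ with $i(c_n,l)\geq 1$ is not justified. Your non-fixing step has a related problem: it leans on $i(T_{c_n}(l),l)\geq i(c_n,l)^2-i(l,l)$, but if $c_n$ crosses the closure of $l$ then $i(c_n,l)$ is infinite, and the inequality you quote is proved for closed curves on finite type surfaces, not for rays here.

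Both issues are avoidable, and the paper's execution of the same strategy shows how. For the curves: take any exhaustion $V_1\subset V_2\subset\cdots$ of $S$ by finite type subsurfaces containing $p$; since $l$ lies in no $V_n$, it crosses some geodesic boundary component $\gamma_n$ of $V_n$, and these $\gamma_n$ automatically leave every compact set --- no lamination is needed. For non-fixing: rather than a single twist and an intersection estimate, observe that the rays $T_{\gamma_n}^i(l)$, $i\geq 0$, are pairwise distinct (their lifts to $\hat{S}$ from $\hat{p}$ have distinct endpoints on $\partial\hat{S}$), so finiteness of $\cl^+(\phi)$ yields a power $\psi_n=T_{\gamma_n}^{i_n}$ with $\psi_n(l)\notin\cl^+(\phi)$. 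Your convergence step is also more delicate than necessary: you must argue that the \emph{geodesic representative} of $T_{c_n}(m)$ still crosses the same initial equator arcs after straightening, whereas the paper tests convergence in $\partial L(S;p)$ on a quasigeodesic of \emph{loops} $\{c_i\}$, each of which is literally fixed by $\psi_n$ once $n$ is large, so the twisted quasigeodesics share arbitrarily long prefixes and their endpoints converge for free. With these substitutions your outline becomes the paper's proof.
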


\begin{proof}
Let $\phi$ be WWPD. By Lemma \ref{preservewitness}, it suffices to show that every ray in $\cl^{\pm}(\phi)$ is contained in a finite type subsurface.

Suppose that $\cl^{\pm}(\phi)$ contains a ray $l$ which is not contained in a finite type subsurface of $S$. Without loss of generality, suppose that $l\in \cl^+(\phi)$. We consider an exhaustion $V_1\subset V_2 \subset \ldots $ of $S$ by finite type subsurfaces $V_n$ containing $p$. For each $n$ there is a boundary component $\gamma_n$ of $V_n$ which $l$ crosses. The sequence of rays $\{T_{\gamma_n}^i(l)\}_{i=0}^\infty$ cover converges to a ray $k$ which spirals onto $\gamma_n$. We see by analyzing the lifts $\widehat{T_{\gamma_n}^i(l)}$ to $\hat{S}$ beginning at $\hat{p}$ and their endpoints on $\partial \hat{S}$ that the elements in the sequence $\{T_{\gamma_n}^i(l)\}_{i=0}^\infty$ are distinct. In particular, there exists $i_n$ such that $T_{\gamma_n}^{i_n}(l)$ is not in the (finite) clique $\cl^+(\phi)$. Denote $\psi_n=T_{\gamma_n}^{i_n}$. The above argument implies that $\psi_n(\cl^+(\phi))\neq \cl^+(\phi)$ for any $n$. Hence, $\psi_n(F(\cl^+(\phi)))\neq F(\cl^+(\phi))$ for all $n$ and $F(\cl^+(\phi))$ is the attracting fixed point of $\phi$ on $\partial L(S;p)$.

However, we have $\psi_n(F(\cl^+(\phi)))\to F(\cl^+(\phi))$ and $\psi_n(F(\cl^-(\phi)))\to F(\cl^-(\phi))$. To see this, consider a quasi-geodesic ray $\{c_i\}_{i=1}^\infty\subset L(S;p)$ converging to $F(\cl^+(\phi))$. For any $I\geq 0$, there exists $N>0$ large enough that $c_i\subset V_N$ for all $i\leq I$. Hence \[c_i=\psi_N(c_i)=\psi_{N+1}(c_i)=\psi_{N+2}(c_i)=\ldots\] for any $i\leq I$. In other words, the quasi-geodesic rays $\{c_i\}$ and $\{\psi_n(c_i)\}$ agree up to $i=I$ for any $n\geq N$. This proves that the endpoints $\psi_n(F(\cl^+(\phi)))$ of the quasigeodesics $\{\psi_n(c_i)\}$ converge to the endpoint $F(\cl^+(\phi))$ of $\{c_i\}$. Similarly, $\psi_n(F(\cl^-(\phi)))\to F(\cl^-(\phi))$.

Thus, $\phi$ is not WWPD, by definition.
\end{proof}

\section{Quasi-morphisms and bounded cohomology} \label{bchsection}

We now turn to an application of Theorem \ref{mainthm} to bounded cohomology of subgroups of big mapping class groups.

\begin{thm}[Handel-Mosher \cite{wwpd} Theorem 2.10]
If $G\curvearrowright X$ is a hyperbolic action possessing an independent pair of loxodromic elements, and if $G$ has a WWPD element, then $H_b^2(G;\R)$ is uncountably infinite dimensional.
\end{thm}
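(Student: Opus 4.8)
\emph{Proof strategy.} The plan is to construct an uncountable family of homogeneous quasimorphisms on $G$ whose classes in $\widetilde{QH}(G)=QH(G)/QH_T(G)$ are linearly independent; since $\widetilde{QH}(G)$ is identified (as recalled above) with the kernel of the comparison map $H_b^2(G;\R)\to H^2(G;\R)$, this forces $H_b^2(G;\R)$ to be uncountably infinite dimensional. The quasimorphisms will be Brooks--Bestvina--Fujiwara counting functions associated to quasi-geodesic segments in $X$: the independent pair of loxodromics provides enough room in $\partial X$ to produce an infinite supply of mutually ``transverse'' such segments, while the single WWPD element is what makes the associated quasimorphisms nontrivial. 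One cannot sidestep this by passing to an acylindrical action and invoking known infinite-dimensionality results, because a WWPD element need not be WPD and the action may in fact admit no WPD element at all; confronting that weakening is the whole point.

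First I would fix the WWPD element $w$. By definition it is loxodromic, with fixed points $w^{\pm}\in\partial X$ and a coarsely $w$-invariant quasi-axis $A(w)\subset X$. Since the existence of an independent pair of loxodromics makes the action non-elementary, its limit set is infinite, and a ping-pong argument supplies a loxodromic $a\in G$ whose two fixed points avoid $\{w^{+},w^{-}\}$. For a sufficiently sparse sequence of positive integers $n$, put $w_n:=a^{n}wa^{-n}$. Conjugation preserves the WWPD property, so each $w_n$ is WWPD, with quasi-axis $a^{n}A(w)$ and fixed points $a^{n}w^{\pm}$; and, exactly as in Bestvina--Fujiwara \cite{bf}, the axes $\{a^{n}A(w)\}_n$ are \emph{uniformly transverse}, i.e.\ there is a constant $B$ so that for $n\neq m$ the nearest-point projection of $a^{m}A(w)$ onto $a^{n}A(w)$ has diameter at most $B$. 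This transversality, which is where the independence of $a$ and $w$ is used, is what will let the counting quasimorphisms below decouple from one another.

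Next I would build the quasimorphisms. Fix a basepoint $x_0\in X$. For each $n$ choose a long oriented quasi-geodesic segment $\sigma_n$ spanning a fundamental domain of $a^{n}A(w)$ under $\langle w_n\rangle$, and let $\overline{\sigma_n}$ be $\sigma_n$ with the reversed orientation. Define $c_n(g)$ to be the coarse number of disjoint oriented copies of $\sigma_n$ occurring along a fixed quasi-geodesic from $x_0$ to $g x_0$, minus the number of copies of $\overline{\sigma_n}$; $\delta$-hyperbolicity makes $c_n$ a quasimorphism of uniformly bounded defect, and its homogenization $\mu_n\colon G\to\R$ is a homogeneous quasimorphism. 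The decisive point is that $\mu_n(w_n)\neq 0$: the element $w_n^{N}$ drags on the order of $N$ copies of $\sigma_n$ into the quasi-geodesic from $x_0$ to $w_n^{N}x_0$, and the issue is whether these copies survive the count. For a WPD element this is automatic because the stabilizer of the axis is virtually $\langle w_n\rangle$; the subtlety here is that a WWPD element may have a large centralizer, so one must work with $\langle w_n\rangle$-cosets inside $\mathrm{Stab}(a^{n}A(w))$ and use the WWPD hypothesis --- discreteness of the $G$-orbit of the ordered pair $(w_n^{+},w_n^{-})$ at $(w_n^{+},w_n^{-})$ --- to rule out elements of $G$ realizing an unbounded ``shift'' between distinct copies of $\sigma_n$ along the axis, so that the count genuinely grows. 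Turning the WWPD condition into exactly this control on the counting functions is the technical heart of the proof, and the step I expect to be the main obstacle; it is what the WWPD lemmas of \cite{bbf} and \cite{wwpd} are designed to provide.

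Finally I would run the independence and uncountability argument. Uniform transversality of the $\{a^{n}A(w)\}$ forces a copy of $\sigma_n$ to appear along a quasi-geodesic essentially only where that quasi-geodesic fellow-travels $a^{n}A(w)$; hence $\mu_n(w_m)=0$ for $m\neq n$, while $\mu_n(w_n)\neq 0$. As in Bestvina--Fujiwara this ``diagonal'' behaviour makes the $\mu_n$ linearly independent in $\widetilde{QH}(G)$, already giving infinite dimensionality. To upgrade to uncountable dimension I would imitate the classical Brooks argument for free groups: pick a second loxodromic independent from both $w$ and $a$, and to each infinite binary sequence associate a single counting quasimorphism built from segments coded by that sequence (via ping-pong words in the two independent loxodromics), then check that distinct sequences give classes in $\widetilde{QH}(G)$ that are linearly independent. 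This exhibits an uncountable linearly independent subset of $\widetilde{QH}(G)$, and therefore of $H_b^2(G;\R)$.
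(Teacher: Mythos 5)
The paper does not actually prove this statement --- it is imported as a black box from Handel--Mosher \cite{wwpd} and used only to deduce Corollary \ref{bounded} --- so your proposal must be judged against the known Bestvina--Fujiwara/Handel--Mosher argument. Your overall strategy (Brooks--Epstein--Fujiwara counting quasimorphisms, with the WWPD hypothesis substituting for WPD in the nontriviality step) is the right one, and you correctly identify where WWPD enters. But the specific family you build is fatally flawed. You take the conjugates $w_n=a^nwa^{-n}$ and claim the diagonal behaviour $\mu_n(w_n)\neq 0$, $\mu_n(w_m)=0$ for $m\neq n$. This is impossible: homogeneous quasimorphisms are conjugation-invariant (apply the defect bound to $(ghg^{-1})^N=gh^Ng^{-1}$ and divide by $N$), so $\mu_n(w_m)=\mu_n(w_n)\neq 0$ for \emph{every} $m$, and your matrix of values is constant along rows rather than diagonal. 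The failure is already visible at the level of the counting functions: a ``copy'' of $\sigma_n$ in this construction means a $G$-translate of $\sigma_n$, and since $\sigma_n$ can be taken to be $a^n\sigma_0$, the set of copies of $\sigma_n$ coincides with the set of copies of $\sigma_0$; the functions $c_n$ are all literally the same function, and the uniform transversality of the axes $a^nA(w)$ --- which you invoke to force $\mu_n(w_m)=0$ --- is irrelevant, because $a^{m-n}\sigma_n$ is a copy of $\sigma_n$ sitting squarely on the axis of $w_m$. In the Bestvina--Fujiwara equivalence ($f\sim g$ iff some element of $G$ carries arbitrarily long oriented segments of the quasi-axis of $f$ coarsely onto that of $g$), all conjugates of $w$ form a single class, so your family contributes at most one dimension to $\widetilde{QH}(G)$.

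The repair is standard. Conjugation should be used only to manufacture a pair of \emph{independent WWPD elements}: since the action admits an independent pair of loxodromics, some $g\in G$ moves $\{w^+,w^-\}$ off itself, and $w_1=w$, $w_2=gwg^{-1}$ are WWPD with disjoint fixed-point pairs. The infinite family to which one attaches counting quasimorphisms is then a family of \emph{words} such as $f_i=w_1^{a_i}w_2^{b_i}$ for suitable exponent sequences; these are pairwise non-conjugate and, more to the point, pairwise inequivalent in the sense above, and it is exactly in bounding how much of the axis of $f_i$ a group element can match against the axis of $f_j$ that WWPD (discreteness of the $G$-orbit of the ordered fixed-point pair at itself) replaces the finiteness furnished by WPD --- this is the content of the relevant lemmas in \cite{bbf} and \cite{wwpd}. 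With pairwise inequivalent $f_i$ in hand, the Bestvina--Fujiwara machinery of \cite{bf} embeds $\ell^1$ into the kernel of $H_b^2(G;\R)\to H^2(G;\R)$, and uncountable dimension is then automatic, since any infinite-dimensional Banach space has uncountable Hamel dimension; no separate binary-sequence construction is needed.
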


As a corollary of this theorem we immediately obtain:

\bounded*

Understanding the quasi-morphisms and bounded cohomology of big mapping class groups is an important yet difficult goal, and could yield obstructions for groups to act by homeomorphisms on finite type surfaces (see e.g. \cite{calblog}). We hope that Corollary \ref{bounded} will be useful towards this goal. In the future, it will be necessary to better understand quasi-morphisms associated to mapping classes which \textit{do not} preserve finite type subsurfaces. Bavard has studied such quasi-morphisms in \cite{ray}.

As an application of this Corollary \ref{bounded}, we have the following result on Torelli groups. Note that this corollary, and the other remaining results of this paper most likely could also be obtained using the machinery of ``weights'' for loxodromic elements of big mapping class groups developed by Bavard-Walker. However, we find the proofs using Theorem \ref{mainthm} to be particularly quick and straightforward.

We denote by $\mathcal{I}(S)$ the \textit{Torelli} group, which is the subgroup of $\MCG(S)$ acting trivially on $H_1(S;\Z)$. We denote $\mathcal{I}(S;p)=\MCG(S;p)\cap \mathcal{I}(S)$.

\begin{cor} \label{torelli}
The second bounded cohomology $H_b^2(\mathcal{I}(S;p); \R)$ is uncountably infinite dimensional.
\end{cor}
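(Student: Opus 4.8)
The plan is to apply Corollary \ref{bounded} to $H = \mathcal{I}(S;p)$, so I need to verify its two hypotheses: that $\mathcal{I}(S;p)$ contains an independent pair of loxodromic elements in the action on $L(S;p)$, and that it contains some $\phi$ with a pseudo-Anosov on a finite type witness. Both will follow from the same construction: I would fix a finite type witness $V \subset S$ of sufficiently high complexity and produce a pseudo-Anosov supported on $V$ that happens to act trivially on $H_1(S;\Z)$. The standard way to do this is a Thurston-style construction using two multicurves $A, B$ filling $V$ whose homology classes are trivial (or can be arranged to vanish), for instance by taking $A$ and $B$ to be built from separating curves, or by working in a subsurface whose inclusion kills first homology — e.g. a witness $V$ bounded by separating curves in $S$, so that any mapping class supported in $V$ which is Torelli on $V$ is automatically Torelli on $S$. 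A cleaner route: the Torelli group of a finite type surface $W$ with $\xi(W) \geq 2$ (say a twice-punctured genus-one surface, or higher) contains pseudo-Anosov elements; pull this back to a subsurface supported pseudo-Anosov on a finite type witness $V \subset S$ that is bounded by separating curves.

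The key steps, in order, are: (1) choose a finite type witness $V \subset S$ containing $p$ whose boundary components are separating in $S$ and with $\xi(V)$ large enough that $\MCG(V;p)$ contains pseudo-Anosovs lying in the Torelli group of $V$; (2) produce such a pseudo-Anosov $\phi_0 \in \mathcal{I}(V;p)$, e.g. via a Thurston construction on homologically trivial filling multicurves, or by citing that the Torelli group of a surface of high enough complexity contains pseudo-Anosovs (Thurston); (3) extend $\phi_0$ by the identity to $\phi \in \MCG(S;p)$, noting that since $\partial V$ is separating, $\phi$ acts trivially on $H_1(S;\Z)$, so $\phi \in \mathcal{I}(S;p)$, and $\phi$ contains a pseudo-Anosov on the finite type witness $V$ — this gives the second hypothesis of Corollary \ref{bounded}; (4) for the first hypothesis, repeat the construction in a disjoint (or nested) second witness $V'$, or more simply take $\phi$ and a conjugate $g\phi g^{-1}$ by some $g \in \mathcal{I}(S;p)$ moving the fixed laminations of $\phi$ off themselves, to obtain two independent loxodromics in $\mathcal{I}(S;p)$ acting on $L(S;p)$ (using Lemma \ref{loxs} and Proposition \ref{qiwitness} to see these are loxodromic on $L(S;p)$); (5) invoke Corollary \ref{bounded} to conclude $H_b^2(\mathcal{I}(S;p);\R)$ is uncountably infinite dimensional.

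The main obstacle I anticipate is arranging the pseudo-Anosov to genuinely lie in the Torelli group while being supported on a \emph{witness}: a witness must contain a neighborhood of $p$, which constrains its topology, and one must check such a witness of the needed complexity exists inside any infinite type $S$ with an isolated puncture (this should be easy since $S$ is infinite type, so we have plenty of room to find a subsurface bounded by separating curves with arbitrarily large genus). A secondary point requiring care is verifying independence of the two loxodromic elements — their attracting/repelling fixed points in $\partial L(S;p)$, which via Proposition \ref{homeo} correspond to ending laminations on the respective witnesses, must be pairwise distinct; choosing the two witnesses to be disjoint (or the conjugating element to move the supporting subsurface) makes this transparent, since ending laminations filling disjoint subsurfaces are distinct points of $\E(S;p)$. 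Once these geometric inputs are in place, the result is an immediate application of Corollary \ref{bounded}.
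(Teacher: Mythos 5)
Your overall strategy coincides with the paper's: verify the two hypotheses of Corollary \ref{bounded} for $H=\mathcal{I}(S;p)$ by exhibiting a Torelli pseudo-Anosov supported on a finite type witness, plus a second independent one. Where you differ is in how the Torelli pseudo-Anosov is manufactured. You ask for a witness $V$ whose boundary curves are separating in $S$ and then import a pseudo-Anosov from the Torelli group of the finite type surface $V$, extending by the identity; this works (the Mayer--Vietoris argument you sketch is fine), but it forces you to justify two extra geometric inputs: that a witness with separating boundary and sufficient complexity always exists in an arbitrary infinite type $S$ (true, via a principal exhaustion, but it deserves a line), and that $\mathcal{I}(V;p)$ contains pseudo-Anosovs. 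The paper sidesteps both: it takes \emph{any} witness $V$ of complexity $\geq 1$ containing a single curve $c$ that is separating \emph{in $S$}, picks any pseudo-Anosov $\psi$ of $V$, and sets $\phi=T_c^{-1}T_{\psi^n(c)}$; this is pseudo-Anosov on $V$ by Penner's criterion and lies in $\mathcal{I}(S;p)$ simply because both twists are about separating curves of $S$. That construction is self-contained and imposes no condition on $\partial V$, which is why I would call it the cleaner route, though yours is also valid.

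One concrete slip to fix: you cannot take the two witnesses to be \emph{disjoint} --- every witness contains a neighborhood of $p$, so any two witnesses intersect. Your parenthetical alternatives rescue the argument: take two \emph{distinct} (e.g.\ nested) witnesses $V_1\neq V_2$; the invariant laminations of the two pseudo-Anosovs fill $V_1$ and $V_2$ respectively, hence determine distinct cliques in $\E(S;p)$, so the two loxodromics share no fixed point on $\partial L(S;p)$ and are independent. This is exactly what the paper does. If you instead use a conjugate $g\phi g^{-1}$ with $g\in\mathcal{I}(S;p)$, you still owe a sentence explaining why such a $g$ moving the fixed cliques off themselves exists. With the disjointness claim replaced by distinctness, your proof goes through.
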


\begin{proof}
First we show that $\mathcal{I}(S;p)$ contains a pseudo-Anosov supported on a finite type witness. Choose a separating essential simple closed curve $c$ in $S$. Choose a finite type witness $V\subset S$ containing $c$ of complexity $\geq 1$. Choose a pseudo-Anosov $\psi \in \MCG(V;p)\leq \MCG(S;p)$. For large enough $n$, $c$ and $\psi^n(c)$ fill $V$. Hence $\phi=T_c^{-1} \circ T_{\psi^n(c)}$ is a pseudo-Anosov on $V$ by Penner's criterion (see \cite{penner}). Moreover, $\phi\in \mathcal{I}(S;p)$ since any Dehn twist on a separating curve lies in $\mathcal{I}(S;p)$.

Now, to see that $\mathcal{I}(S;p)$ contains a pair of independent loxodromic elements, it suffices to consider $\phi_1\in \mathcal{I}(S;p)$ a pseudo-Anosov on a finite type witness $V_1$ and $\phi_2\in \mathcal{I}(S;p)$ a pseudo-Anosov on a finite type witness $V_2$ with $V_1\neq V_2$. The elements $\phi_1$ and $\phi_2$ are independent since they have no fixed clique in $\partial L(S;p)$ in common.
\end{proof}

As a further application, we study a construction of nontrivial quasi-morphisms on big mapping class groups. Let $S$ be a surface of finite positive genus with some finite, positive number of isolated punctures and at least one non-isolated puncture. Then the space of non-isolated punctures of $S$ is necessarily a Cantor set. Fix an isolated puncture $p$. Any mapping class of $S$ permutes the non-isolated punctures of $S$ and there is a surjective ``(non-isolated puncture)-forgetting'' homomorphism $f:\MCG(S)\to \MCG(\overline{S})$ where $\overline{S}$ is $S$ with the non-isolated punctures filled in. The puncture $p$ is identified with a puncture of $\overline{S}$ which we will also call $p$, by abuse of notation.

For any quasimorphism $q:\MCG(\overline{S};p)\to \R$, $q\circ f$ is a quasimorphism of $\MCG(S;p)$. Moreover if $q$ is nontrivial then $q\circ f$ is nontrivial by the following lemma.

\begin{rem}
This lemma is well-known to experts but we have not been able to find a proof in the literature. Hence we give a proof for completeness.
\end{rem}

\begin{lem}
Let $f:G\to H$ be a surjective homomorphism of groups and $q:H\to \R$ a nontrivial quasimorphism of $H$. Then $q\circ f$ is a nontrivial quasimorphism of $G$.
\end{lem}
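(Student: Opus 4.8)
The plan is to show the contrapositive: if $q \circ f$ is a trivial quasimorphism of $G$, then $q$ is a trivial quasimorphism of $H$. So suppose $r : G \to \R$ is a homomorphism with $|r(g) - q(f(g))| \leq C$ for all $g \in G$. The key point is that $r$ must descend to a homomorphism $\overline{r} : H \to \R$; once we have that, the bound $|r(g) - q(f(g))| \leq C$ together with surjectivity of $f$ immediately gives $|\overline{r}(h) - q(h)| \leq C$ for all $h \in H$, so $q$ is trivial.

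The crux is therefore to check that $r$ is constant (equal to $0$) on $\ker f$, so that it factors through $H \cong G/\ker f$. First I would observe that a homomorphism from any group to $\R$ vanishes on every element of finite order and, more relevantly here, its restriction to any subgroup $K$ is a homomorphism $K \to \R$; I want to show this restriction to $K = \ker f$ is zero. The useful fact is that $r|_K$ is not just a homomorphism but is \emph{bounded} on $K$: indeed for $k \in K$ we have $f(k) = e_H$, and since $q$ has some defect $D$ one checks $|q(e_H)| \leq D$ and more generally $q$ is bounded on $\{e_H\}$ trivially, so $|r(k)| \leq |q(f(k))| + C = |q(e_H)| + C$, a bound independent of $k$. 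A bounded homomorphism to $\R$ is identically zero (if $r(k) \neq 0$ then $r(k^n) = n\, r(k)$ is unbounded). Hence $r$ vanishes on $\ker f$.

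With $r|_{\ker f} \equiv 0$, the homomorphism $r$ factors as $r = \overline{r} \circ f$ for a unique homomorphism $\overline{r} : H \to \R$, using surjectivity of $f$ to define $\overline{r}(h) = r(g)$ for any $g$ with $f(g) = h$ (well-defined precisely because $r$ kills $\ker f$). Then for any $h \in H$, pick $g \in G$ with $f(g) = h$; we get
\[
|\overline{r}(h) - q(h)| = |r(g) - q(f(g))| \leq C,
\]
so $q$ differs from the homomorphism $\overline{r}$ by a bounded amount, i.e. $q$ is trivial. This contradicts the hypothesis that $q$ is nontrivial, completing the proof. I expect no real obstacle here; the only mildly delicate point is remembering that one needs the \emph{boundedness} of $r$ on $\ker f$ (not just that it is a homomorphism) to conclude it is zero, which is exactly where the quasimorphism defect bound on $q$ gets used.
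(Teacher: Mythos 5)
Your proposal is correct and follows essentially the same route as the paper's proof: show $r$ is bounded (hence vanishes) on $\ker f$, factor it through $H$ using surjectivity, and transport the bound $|\overline{r}(h)-q(h)|\leq C$ to contradict nontriviality of $q$. You simply spell out in more detail the step the paper states tersely, namely why boundedness of $r$ on $\ker f$ holds and why a bounded homomorphism to $\R$ must vanish.
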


\begin{proof}
Suppose that there exists a homomorphism $r:G\to \R$ with $|r(g)-q\circ f(g)|\leq D$ for all $g\in G$. In particular $r$ is bounded on $\ker(f)$ so it must in fact vanish on $\ker(f)$. So $r$ induces a homomorphism $\overline{r}:H\to \R$. That is, $r=\overline{r}\circ f$. Since $f$ is surjective, for any $h\in H$ there exists $g\in G$ with $f(g)=h$. Hence we have \[|\overline{r}(h)-q(h)|=|\overline{r}\circ f(g)-q\circ f(g)|=|r(g)-q\circ f(g)|\leq D.\] This contradicts that $q$ is nontrivial.
\end{proof}

Since $\overline{S}$ is finite type, $\MCG(\overline{S};p)$ admits an infinite dimensional space of nontrivial quasimorphisms (\cite{bf}). Thus we also immediately obtain from this fact that $\dim(\widetilde{QH}(\MCG(S;p)))=\infty$. We may ask whether every nontrivial quasimorphism of $\MCG(S;p)$ arises in this way. In other words, $f$ induces a linear map $f^*:\widetilde{QH}(\MCG(\overline{S};p))\to \widetilde{QH}(\MCG(S;p))$ and we can ask whether $f^*$ is surjective. In fact, this is not the case.

\begin{cor}
The space $H_b^2(\ker(f);\R)$ is uncountably infinite dimensional. Moreover, the induced map $f^*$ is not surjective.
\end{cor}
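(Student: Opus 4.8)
The plan is to apply Corollary \ref{bounded} to the group $H=\ker(f)$, so the main work is to exhibit inside $\ker(f)$ both an independent pair of loxodromics on $L(S;p)$ and a single mapping class containing a pseudo-Anosov on a finite type witness. The key observation is that mapping classes supported on a finite type subsurface $V\subset S$ that is \emph{disjoint from the Cantor set of non-isolated punctures} — equivalently, that can be isotoped into $\overline{S}$ minus a neighborhood of each non-isolated puncture — lie in $\ker(f)$, since the puncture-forgetting homomorphism $f$ restricts to the identity on such subsurfaces. So I would first fix a finite type witness $V\subset S$ containing $p$ of complexity $\geq 1$ whose closure misses all non-isolated punctures; such a $V$ exists because $S$ has positive genus, so there is a genus-carrying witness for $p$ not touching the Cantor set. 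Any mapping class supported on $V$ then lies in $\ker(f)$.

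Next I would produce the required elements. Choosing a pseudo-Anosov $\psi\in\MCG(V;p)\leq\ker(f)$, Lemma \ref{loxs} gives that $\psi$ acts loxodromically on $L(V;p)$, hence on $L(S;p)$ since $L(V;p)\hookrightarrow L(S;p)$ is a quasi-isometric embedding (Proposition \ref{qiwitness} and its extension); and $\psi$ contains a pseudo-Anosov on the finite type witness $V$, giving the second bullet of Corollary \ref{bounded}. For the independent pair, I would pick a second finite type witness $V'\subset S$ containing $p$, also disjoint from the non-isolated punctures, with $V'\neq V$ (for instance a witness built from a different handle or a curve configuration filling a different subsurface), and a pseudo-Anosov $\psi'\in\MCG(V';p)\leq\ker(f)$ acting loxodromically on $L(S;p)$. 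As in the proof of Corollary \ref{torelli}, $\psi$ and $\psi'$ are independent loxodromics because their fixed cliques in $\partial L(S;p)$ — which by Theorem \ref{mainthm} and Lemma \ref{subimage} correspond to laminations filling $V$ and $V'$ respectively — cannot coincide. Therefore $H=\ker(f)$ satisfies both hypotheses of Corollary \ref{bounded}, and we conclude that $H_b^2(\ker(f);\R)$ is uncountably infinite dimensional.

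For the second assertion, that $f^*:\widetilde{QH}(\MCG(\overline{S};p))\to\widetilde{QH}(\MCG(S;p))$ is not surjective, I would use the exactness of bounded cohomology in low degrees for the extension $1\to\ker(f)\to\MCG(S;p)\to\MCG(\overline{S};p)\to 1$. Concretely, any quasimorphism on $\MCG(S;p)$ pulled back via $f$ from $\MCG(\overline{S};p)$ vanishes on $\ker(f)$, so the image of $f^*$ is contained in the subspace of (classes of) quasimorphisms that are trivial — indeed bounded — on restriction to $\ker(f)$. Since $\widetilde{QH}(\ker(f))$ is infinite dimensional by the first part, and in fact $\ker(f)$ is not virtually abelian, restriction $\widetilde{QH}(\MCG(S;p))\to\widetilde{QH}(\ker(f))^{\MCG(\overline{S};p)}$ (or simply the observation that there exist unbounded quasimorphisms of $\MCG(S;p)$ that are unbounded already on $\ker(f)$, obtained by extending/averaging a homogeneous quasimorphism of $\ker(f)$) produces classes not in the image of $f^*$. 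The cleanest route is: take a homogeneous quasimorphism $q_0$ on $\ker(f)$ unbounded on some $\phi\in\ker(f)$ containing a pseudo-Anosov on a finite type witness; by the standard argument (Bestvina–Fujiwara, or the fact that $\phi$ is WWPD hence has a quasimorphism on all of $\MCG(S;p)$ unbounded on its powers, cf.\ Corollary \ref{bounded}'s proof via Handel–Mosher) there is a quasimorphism $q$ on $\MCG(S;p)$ unbounded on powers of $\phi\in\ker(f)$; such a $q$ cannot be cohomologous to $q'\circ f$ for any $q'$ on $\MCG(\overline{S};p)$ since $q'\circ f$ is bounded on $\ker(f)\ni\phi$. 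Hence $[q]\notin\operatorname{im}(f^*)$.

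The main obstacle I anticipate is the bookkeeping in the first paragraph: one must be careful that the chosen witnesses $V,V'$ genuinely have closures disjoint from the Cantor set of non-isolated punctures (so that $\MCG(V;p),\MCG(V';p)\leq\ker(f)$), that they are witnesses for $L(S;p)$ (contain a neighborhood of $p$), and that $V\neq V'$ in a way that forces the independence of $\psi,\psi'$ via Theorem \ref{mainthm}. All three are arranged by the positive-genus hypothesis, which lets us hide enough topology near $p$ away from the ends accumulated by non-isolated punctures; none of the arguments is deep, but the statement of the corollary is only as strong as the care taken here. Everything else is a direct citation of Corollary \ref{bounded}, Lemma \ref{loxs}, Proposition \ref{qiwitness}, and Theorem \ref{mainthm}.
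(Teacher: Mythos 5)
There is a genuine gap, and it is in the very first step. Your ``key observation''---that a mapping class supported on a finite type subsurface $V$ whose closure misses the non-isolated punctures lies in $\ker(f)$---is false. For such a $V$ the puncture-forgetting map $f$ carries $\MCG(V;p)\leq \MCG(S;p)$ to the corresponding subgroup of $\MCG(\overline{S};p)$ \emph{injectively} (it ``restricts to the identity'' only in the sense of acting as the inclusion, not in the sense of being the trivial homomorphism): a pseudo-Anosov $\psi$ supported on $V$ maps to a pseudo-Anosov supported on $V\subset \overline{S}$, which has infinite order in $\MCG(\overline{S};p)$. So your $\psi$ and $\psi'$ do not lie in $\ker(f)$, and both halves of the argument collapse, since the second half also requires a loxodromic element of $\ker(f)$. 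The kernel of $f$ consists instead of mapping classes that become trivial \emph{after} the Cantor set of punctures is filled in---point-pushes and Dehn twists about curves bounding disks that contain only non-isolated punctures. This is exactly what the paper exploits: choose $c$ cutting off a punctured disk containing only non-isolated punctures, so that $T_c$ and $T_{\psi^n(c)}$ lie in $\ker(f)$ because these curves become inessential in $\overline{S}$; then $\phi=T_c^{-1}\circ T_{\psi^n(c)}$ lies in $\ker(f)$ and is pseudo-Anosov on the finite type witness $V$ filled by $c$ and $\psi^n(c)$, by Penner's criterion. A second such element on a different witness supplies the independent pair, and Corollary \ref{bounded} applies.

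Your argument for the non-surjectivity of $f^*$ has the right shape but leaves one point unaddressed: $[q]\in\operatorname{im}(f^*)$ means that $q-\overline{q}\circ f$ is within bounded distance of a \emph{homomorphism} $r:\MCG(S;p)\to \R$, so boundedness of $\overline{q}\circ f$ on $\ker(f)$ does not by itself contradict unboundedness of $q$ on the powers of $\phi$---one must also rule out $r(\phi)\neq 0$. The paper closes this by taking $q$ from \cite{scl} Corollary 3.2, bounded on powers of elliptic elements, and by noting that $\phi^n$ is a product of Dehn twists (indeed $\phi=[T_c^{-1},\psi^n]$ is a commutator, so every homomorphism to $\R$ kills it). With the corrected construction of $\phi$ this is easy to fix, but it needs to be said.
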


\begin{proof}
Consider a curve $c$ in $S$ cutting off a punctured disk containing only non-isolated punctures of $S$. Consider a finite type witness $V$ containing $c$ of complexity $\geq 1$. Choose $\psi$ a pseudo-Anosov mapping class supported on $V$. Then for large enough $n$, $c$ and $\psi^n(c)$ fill $V$ and hence $\phi=T_c^{-1}\circ T_{\psi^n(c)}$ is pseudo-Anosov by Penner's criterion. The mapping class $\phi$ is WWPD and $\phi\in \ker(f)$ since $T_c$ and $T_{\psi^n(c)}$ lie in $\ker(f)$. As in the proof of Corollary \ref{torelli}, we may construcct another loxodromic element $\phi'\in \ker(f)$ such that $\phi$ and $\phi'$ are independent. Hence by Theorem \ref{bounded}, $H_b^2(\ker(f);\R)$ is uncountably infinite dimensional.

For the proof of the second statement of the corollary, we use work of Bestvina-Bromberg-Fujiwara, see \cite{scl}. By \cite{scl} Corollary 3.2, there is a power $\phi^n$ of $\phi$ and a quasimorphism $q:\MCG(S;p)\to \R$ such that $q$ is unbounded on powers of $\phi^n$ but bounded on the powers of any elliptic element of $\MCG(S;p)$. In particular, $q$ is nontrivial. For if there were a homomorphism $r$ with $|q(g)-r(g)|\leq D$ for all $g\in \MCG(S;p)$ then $r$ would vanish on all Dehn twists in $\MCG(S;p)$ but not on large enough powers of $\phi^n$. This is a contradiction because any power of $\phi^n$ is a product of Dehn twists.

The nontrivial quasimorphism $q$ is not in the image of $f^*$. For if $\overline{q}:\MCG(\overline{S};p)\to \R$ is any quasimorphism, then $\overline{q} \circ f$ is bounded on the powers of $\phi^n$ since $f(\phi^n)=1$. In particular $\overline{q}\circ f$ is not finite $\ell^\infty$ distance from $q$ since $q$ is unbounded on the powers of $\phi^n$.
\end{proof}

\bibliographystyle{plain}
\bibliography{wwpd}

\noindent \textbf{Alexander J. Rasmussen } Department of Mathematics, Yale University, New Haven, CT 06520. \\
E-mail: \emph{alexander.rasmussen@yale.edu}

\end{document}